\DeclareMathOperator{\id}{id}
\DeclareMathOperator{\dset}{\downarrow}
\DeclareMathOperator{\uset}{\uparrow}
\newcommand{\N}{\mathbb{N}}
\newcommand{\End}{\mathrm{End}}
\newcommand{\LH}{\mathrm{LH}}
\newcommand{\Spec}{\operatorname{Spec}}
\newcommand{\sym}{\mathrel{\mskip1mu\reflectbox{$\propto$}\mskip-1mu}}
\numberwithin{equation}{section}
\numberwithin{figure}{section}
\numberwithin{table}{section}
\newtheorem{thm}{Theorem}[section]
\newtheorem*{thm*}{Theorem}
\newtheorem{lem}[thm]{Lemma}
\newtheorem{cor}[thm]{Corollary}
\newtheorem{pro}[thm]{Proposition}
\theoremstyle{definition}
\newtheorem{defn}[thm]{Definition}
\newtheorem{conjecture}[thm]{Conjecture}
\newtheorem*{convention*}{Convention}
\newtheorem{rem}[thm]{Remark}
\newtheorem{exa}[thm]{Example}
\title{L-algebras and their ideals: from simplicity to semidirect products}
\author{Silvia Properzi and Yufei Qin}
\address[Silvia Properzi]{Department of Mathematics and Data Science, Vrije Universiteit Brussel, Pleinlaan 2, 1050 Brussel, Belgium}
\email{Silvia.Properzi@vub.be}
\address[Yufei Qin]{Department of Mathematics and Data Science\\ Vrije Universiteit Brussel\\ Pleinlaan 2, 1050 Brussels, Belgium \\ School of Mathematical Sciences\\ Key Laboratory of Mathematics and Engineering Applications (Ministry of Education)\\ Shanghai Key laboratory of PMMP\\	East China Normal University\\	Shanghai 200241,	China}
 \email{Yufei.Qin@vub.be}
\begin{document}
\begin{abstract}
In this paper, we investigate the ideals of semidirect products of L-algebras and the structure of simple L-algebras. We provide a precise characterization of the ideals of semidirect products and describe the structure of their prime spectrum. 
Furthermore, we introduce a family of finite simple L-algebras and prove that 
every simple linear L-algebra belongs to this family.
We also show that the family we construct coincides with the class of simple algebras in a certain subclass of finite CKL-algebras.
As an application, we use these results to give a clear description of linear Hilbert algebras and their symmetric semidirect products.
\end{abstract}

\maketitle

\noindent \textbf{Keywords:} \textit{L-algebra, KL-algebra, CKL-algebra, Hilbert algebra, Semidirect product.}

\section*{Introduction}

Rump \cite{Rump_selfsimilar} introduced the notion of an L-algebra as a unifying algebraic framework arising from the study of the Yang--Baxter equation \cite{Rump_cyclesets,Rump_YBE,EtingofSchedlerSoloviev1999}, lattice-ordered groups \cite{AndersonFeil1988,Darnel_1995LatticeOrderedGroups}, and algebraic logics \cite{Rump_classiclogic}.  
For example, L-algebras generalize a wide range of logical algebraic structures, including Brouwerian semilattices \cite{Koehler1978Brouwerian}, MV-algebras \cite{Chang1958MVL,Chang1959Lukasiewicz,GispertMundici2005MV}, orthomodular lattices \cite{Loomis1955Dimension}, Hilbert algebras \cite{Diego66,Diego1965Hilbert,Horn_1962Separation}, and Glivenko algebras \cite{Rump_Glivenko}.  
Collectively, these structures encompass classical propositional logic, residuation theory, and the algebraic semantics of main non-classical logics \cite{Gottwald2005FuzzyLogic}.

In \cite{Rump_cyclesets}, it was shown that left non-degenerate involutive set-theoretic solutions of the Yang--Baxter equation correspond precisely to sets equipped with bijective left multiplications $\sigma_x : X \to X$, defined by $\sigma_x(y) = x \cdot y$, that satisfy the \emph{cycloid equation}
\[
  (x \cdot y) \cdot (x \cdot z) = (y \cdot x) \cdot (y \cdot z).
\]
An L-algebra is therefore a set $X$ endowed with a binary operation $(x,y)\mapsto x\cdot y$ satisfying the cycloid equation together with the axioms listed in \cref{Def: L-algebra}.  
Its \emph{structure group} $G(X)$ is defined as the quotient group of the self-similar closure $S(X)$ (see \cref{Def: self-similar closure}).

The structure group $G(X)$ naturally belongs to the class of  \emph{right $\ell$-groups} \cite{Rump_RightLGroups}, that are groups equipped with a lattice order invariant under right multiplication.  
This class contains, in particular, Artin’s braid groups \cite{BrieskornSaito_Artin,Deligne_Immeubles} and Garside groups \cite{DehornoyParis_Garside,Dehornoy_Garside}.  
Moreover, it was shown in \cite{Rump2015RightLGarside} that noetherian right $\ell$-groups with duality correspond precisely to the non-degenerate unitary set-theoretic solutions of the Yang--Baxter equation.

In recent years, substantial progress has been made on understanding ideals and structural properties of L-algebras.  
Rump and Vendramin \cite{RV_primes} proved that the lattice of ideals $\mathscr{I}(X)$ of an L-algebra $X$ is distributive and used this to determine the ideals and prime spectra of direct products of L-algebras.  
In \cite{DietzelMenchonVendramin}, the authors carried out an in-depth study of finite linear L-algebras and their isomorphism classes.

The purpose of this paper is to study the ideals of semidirect products of L-algebras and the structure of simple L-algebras.  
Our first main result is the following (see also \cref{Theorem:The equivalence of the ideal in semidirect product}):

\begin{thm*}\label{Theorem_inroduction:The equivalence of the ideal in semidirect product}
Let $X$ and $Y$ be L-algebras such that $Y$ operates on $X$ via $\rho$.  
Then $K$ is an ideal of $X \rtimes_{\rho} Y$ if and only if $\rho$ induces an operation 
\[
\tilde{\rho} : Y / K_Y \longrightarrow \End(X / K_X)
\]
such that
\[
(X \rtimes_{\rho} Y)\big/ (K_X \rtimes_{\rho|_{K_Y}} K_Y )
\;\cong\;
X / K_X \rtimes_{\tilde{\rho}} Y / K_Y .
\]

\end{thm*}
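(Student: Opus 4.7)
The plan is to identify the correct candidates for $K_X$ and $K_Y$ sitting inside $K$, and then establish each direction using the explicit L-algebra structure of the semidirect product. Throughout, I would set $K_X := \{x \in X : (x, 1_Y) \in K\}$ and $K_Y := \{y \in Y : (1_X, y) \in K\}$, viewed via the canonical embeddings of $X$ and $Y$ into $X \rtimes_{\rho} Y$.

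For the $(\Leftarrow)$ direction, assume that $\tilde{\rho}$ is well-defined and consider the projection
\[
\pi : X \rtimes_{\rho} Y \longrightarrow X/K_X \rtimes_{\tilde{\rho}} Y/K_Y, \qquad (x,y) \mapsto ([x], [y]).
\]
Verifying that $\pi$ is a morphism of L-algebras reduces to checking that the semidirect product operation commutes with the componentwise quotient, which is immediate once $\tilde{\rho}$ is defined. The kernel of $\pi$ is precisely $K_X \rtimes_{\rho|_{K_Y}} K_Y$, which simultaneously shows that this set is an ideal and produces the claimed isomorphism via the first isomorphism theorem for L-algebras.

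For the $(\Rightarrow)$ direction, the first task is to show that $K_X$ is an ideal of $X$ and $K_Y$ is an ideal of $Y$; this follows by pulling back the defining axioms of an ideal along the embeddings, using that $K$ is closed under the operation of $X \rtimes_{\rho} Y$. Then I would verify that $K$ decomposes as $K_X \rtimes_{\rho|_{K_Y}} K_Y$, which requires showing that $\rho(y)(K_X) \subseteq K_X$ for every $y \in K_Y$, so that the restricted semidirect product is actually defined.

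The main obstacle will be establishing well-definedness of the induced operation $\tilde{\rho} : Y/K_Y \to \End(X/K_X)$: one must show that if $y \equiv y' \pmod{K_Y}$ and $x \equiv x' \pmod{K_X}$, then $\rho(y)(x) \equiv \rho(y')(x') \pmod{K_X}$. This is the step where the hypothesis that $K$ is an ideal of the full semidirect product (rather than merely a product of ideals) plays a decisive role, since it forces the compatibility between the $Y$-action and the two congruences modulo $K_X$ and $K_Y$. Once $\tilde{\rho}$ is in place, the claimed isomorphism follows by invoking the construction of $\pi$ from the previous direction.
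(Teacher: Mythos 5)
Your overall strategy matches the paper's in outline: define $K_X$ and $K_Y$ via the embeddings, show they are ideals, decompose $K$ as $K_X \rtimes_{\rho|_{K_Y}} K_Y$, and realise the quotient as the kernel of the componentwise projection. However, there is a genuine gap at exactly the point you flag as ``the main obstacle'': you assert that the ideal hypothesis on $K$ ``forces the compatibility between the $Y$-action and the two congruences'', but you give no argument, and this implication is the entire mathematical content of the theorem. Concretely, two things must be extracted from the ideal axioms for $K$: first, that $\rho_v(K_X)\subseteq K_X$ for \emph{every} $v\in Y$ (not only for $v\in K_Y$, which is all you mention when setting up the restricted semidirect product) --- this is what makes $\rho$ descend to a map $\rho^{K_X}\colon Y\to\End(X/K_X)$; second, that $(x\cdot\rho_u(y))\cdot y$ and $y\cdot(x\cdot\rho_u(y))$ lie in $K_X$ for all $x\in K_X$, $u\in K_Y$, $y\in X$, which yields $\rho^{K_X}_u=\id_{X/K_X}$ for $u\in K_Y$ and hence, via $\rho^{K_X}_v=\rho^{K_X}_{v\cdot w}\circ\rho^{K_X}_v=\rho^{K_X}_{w\cdot v}\circ\rho^{K_X}_w=\rho^{K_X}_w$ for $v\equiv w\pmod{K_Y}$, the constancy of $\rho^{K_X}$ on $K_Y$-classes. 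The paper isolates these two conditions as (I'1) and (I'2) in \cref{Lemma:The equivalence of the ideal in semidirect product} and proves their equivalence with ``$K_X\rtimes_{\rho|_{K_Y}}K_Y$ is an ideal'' by a nontrivial computation inside the semidirect product (the auxiliary expressions $A$ through $H$ there); without some substitute for that computation your forward direction does not close.

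Two smaller points. In the backward direction your projection argument only shows that $K_X\rtimes_{\rho|_{K_Y}}K_Y=\ker\pi$ is an ideal; to conclude that $K$ itself is an ideal you still need $K=K_X\rtimes_{\rho|_{K_Y}}K_Y$, which is not among the hypotheses of that direction and must be argued. Relatedly, in the forward direction you state that $K$ ``decomposes'' without proof; the paper obtains the set-theoretic decomposition $(x,u)\in K\Leftrightarrow x\in K_X\text{ and }u\in K_Y$ from the identities $(1,u)\cdot(x,u)=(x,1)$ and $(x,u)\cdot(1,u)=(1,1)$ in \cref{Lemma:The ideal split of semidirect product}, and you would need the same computation.
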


\medskip

The paper is organized as follows.

In \cref{Section: Preliminaries}, we recall the definitions and basic notions related to L-algebras.

In \cref{Section: Semidirect product and self-similarity}, we show that self-similar closure is compatible with semidirect products of L-algebras (see \cref{Thm: Gothere}).

In \cref{Section: Ideals of Semidirect Products of L-Algebras}, we study the ideals and prime ideals of semidirect products of L-algebras and of symmetric semidirect products of CKL-algebras.  
We prove \cref{Theorem:The equivalence of the ideal in semidirect product} and introduce the notion of $\rho$-prime ideals.  
We further show that the lattice of $\rho$-ideals $\rho\mathscr{I}(X)$ is distributive, and that the spectrum of $X\rtimes_{\rho} Y$ is naturally the disjoint union of the $\rho$-spectrum of $X$ and the spectrum of $Y$.  
As an application, we establish a natural correspondence between the ideals of the semidirect product and those of the symmetric semidirect product of CKL-algebras (see \cref{Thm: ideals of symmetric semidirect product}).

In \cref{Section: Simple linear L-algebras and CKL-algebras}, we introduce a family of simple CKL-algebras $\{\mathbf{A}_n\}_{n\geq 1}$ (see \cref{A_n is CKL}) and investigate linear L-algebras using ideal-theoretic methods.  
We prove that every simple linear L-algebra of size $n$ is isomorphic to $\mathbf{A}_n$ (see \cref{Thm: linearL_ideals}).  
We also introduce the notion of tail$^{+}$ L-algebras (see \cref{Def: tail+}), and show that every simple tail$^{+}$ CKL-algebra is linear (see \cref{Thm: tail+ L-algebra}).  
Furthermore, we conjecture that all simple CKL-algebras are linear.

In \cref{Section: Symmetric semidirect products and Hilbert algebras}, we investigate linear Hilbert algebras and their symmetric semidirect products in detail.

\section{Preliminaries}\label{Section: Preliminaries}
In this section, we collect the basic concepts and results on L-algebras that will be required in the sequel.

  \begin{defn}\label{Def: L-algebra}
      An \emph{L-algebra} is a set $X$ equipped with a binary operation $$(x,y)\mapsto x\cdot y,\qquad x,y\in X$$ and a distinguished element $1\in X$, satisfying the following conditions:
\begin{align}
        &1\cdot x=x,\quad x\cdot 1=x\cdot x=1  \label{logic unit}  \\
        &(x\cdot y)\cdot (x\cdot z)=(y\cdot x)\cdot (y\cdot z)\label{Cycle Sets eq}\\
    &    x\cdot y=y\cdot x=1 \Longrightarrow x=y.
    \end{align}
  \end{defn}

Several notable subclasses of L-algebras arise 
from additional identities.
Let $X$ be an L-algebra.
\begin{itemize}
\item  $X$ is a \emph{KL-algebra} if $x\cdot (y\cdot x)=1$,
      for $x,y\in  X$.
\item $X$ is a \emph{CKL-algebra} if 
$x\cdot(y\cdot z)=y\cdot(x\cdot z)$,
  for every $x, y, z\in X$.
\item $X$ is a \emph{Hilbert algebra} if $x\cdot(y\cdot z)=(x\cdot y)\cdot(x\cdot z)$, for every $x, y, z\in X$.
\end{itemize}
These classes are related by inclusion: every Hilbert algebra is CKL, and every CKL algebra is KL.

An L-algebra that possesses a smallest element $0$ is called a \emph{bounded L-algebra}.  
In this case, one can define the \emph{negation} by $x^{*}:= x \cdot 0$.
Bounded CKL-algebras are known as \emph{Glivenko algebras}.
Given an L-algebra $X$, for each element $x\in X$ we denote by $\sigma_x:X\to X$ the map $\sigma_x(y)=x\cdot y$.

Every L-algebra carries a natural partial order defined by $x\leq y$ if and only if $x\cdot y=1$.
For every element $x$ of an L-algebra $X$, we denote by $\dset x$
the downset $\{y\in X\mid y\leq x\}$ and by $\uset x$ the upset $\{y\in X\mid y\geq x\}$.
An element $x$ of an L-algebra $X$ is \emph{invariant} if $y\cdot x=x$ for all $y>x$,
while it is called \emph{prime} if $x\neq 1$ and $y\cdot x\leq x$ for every $y>x$.

\begin{defn}
    Let $X$ be an L-algebra and $S\subseteq X$.
    We say that $S$ is an L-\emph{subalgebra} if it is closed under the L-algebra operation of $X$.
\end{defn}

\begin{defn}
     We say that a subset $I\subseteq X$ is an \emph{ideal} of the L-algebra $X$ if it satisfies the following properties:
    \begin{enumerate}
        \item[(I1)] If $x\in I$ and $x\cdot y\in I$ then $y\in I$.
        \item[(I2)] If $x\in I$ then $y\cdot x\in I$ for all $y\in X$.
        \item[(I3)]  If $x\in I$ then $(x\cdot y)\cdot y\in I$ for all $y\in X$.
        \item[(I4)] If $x\in I$ then $y\cdot (x\cdot y)\in I$ for all $y\in X$.
    \end{enumerate}    
\end{defn}
Condition (I3) already implies that every ideal is closed under the L-algebra operation, so every ideal is an L-subalgebra. 
Simpler characterizations for ideals exist for special subclasses.

\begin{rem}\label{rem: ideals of KL and CKL-alg}
   Let $X$ be a KL-algebra.
    Then $I\subseteq X$ is an ideal of $X$ if and only if
    $I$ satisfies (I1) and (I3).   
      
    If $X$ is a CKL-algebra,
    then $I\subseteq X$ is an ideal of $X$ if and only if $1\in I$ and 
    $I$ satisfies (I1).
\end{rem}

If $X$ is an L-algebra, we denote by $\mathscr{I}(X)$ the set of ideals of $X$.
This set is itself an L-algebra 
with the binary operation defined as 
\[
I\cdot J=\{x\in X\mid \langle x \rangle \cap I\subset J\}.
\]
Note that $(I\cdot J)\cap I\subseteq J$ and for every ideal $K$ such that $K\cap I\subseteq J$ then $K\subseteq I\cdot J$.
Moreover, if $I\subseteq J$, then
$\langle x \rangle \cap I\subseteq J$,
for every $x\in I$. So
$I\cdot J=X$.

Using this structure, we can now define the notion of prime ideals. 
A prime ideal of $X$ is simply a prime element in the L-algebra of ideals $\mathscr{I}(X)$:
\begin{defn}
    A proper ideal $P$ of an L-algebra $X$ is \emph{prime}
    if for every ideal $I$ of $X$ either $I\subseteq P$ or $I\cdot P\subseteq P$.
\end{defn}
The prime ideals of $\mathscr{I}(X)$ form a topological space $\Spec(X)$, called the \emph{spectrum} of $X$, whose open sets are the collections $\{\mathscr{U}_I\}_{I\in \mathscr{I}(X)}$, where
\[
\mathscr{U}_I := \{\, P \in \Spec(X) \mid I \not\subseteq P \,\}.
\]

Furthermore, in \cite{RV_primes}, the following results are proven.
\begin{thm}\label{Thm: The elements in joint ideals}
    Let $I$ and $J$ be ideals of an L-algebra $X$. Then $y \in X$ belongs to $I \vee J$ if and only if there is an element $x \in I$ with $x \equiv y(\bmod J)$.
\end{thm}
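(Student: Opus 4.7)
My plan is to establish the two implications separately. The reverse direction is immediate from axiom~(I1): if $x\in I$ and $x\equiv y\pmod{J}$, so that $x\cdot y\in J$ and $y\cdot x\in J$, then $x\in I\subseteq I\vee J$ and $x\cdot y\in J\subseteq I\vee J$, and (I1) applied inside the ideal $I\vee J$ forces $y\in I\vee J$.

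For the forward direction I would introduce the candidate set
\[
K:=\{\,y\in X : \exists\, x\in I\text{ with }x\equiv y\pmod{J}\,\},
\]
observe that $K\supseteq I$ (take $x=y$) and $K\supseteq J$ (take $x=1$, since $1\cdot y=y\in J$ and $y\cdot 1=1\in J$ when $y\in J$), and reduce the statement to showing that $K$ is an ideal. By minimality of $I\vee J$ this forces $I\vee J\subseteq K$; combined with $K\subseteq I\vee J$ (obtained from the reverse direction applied pointwise), it yields $K=I\vee J$. To see that $K$ is an ideal, the cleanest route is to pass to the quotient L-algebra. The relation $\equiv\pmod{J}$ is a congruence, so the canonical projection $\pi\colon X\to X/J$ is an L-algebra morphism, and by construction $K=\pi^{-1}(\pi(I))$. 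Preimages of ideals under L-algebra morphisms are ideals (a short axiom-by-axiom check), so the task reduces to showing that $\pi(I)\subseteq X/J$ is an ideal; the closure axioms (I2)--(I4) for $\pi(I)$ transfer from the corresponding axioms for $I$ by applying them to a witness in $I$ and projecting through $\pi$.

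The main obstacle is axiom~(I1) for $\pi(I)$ (equivalently for $K$): given witnesses $x_1,x_2\in I$ with $x_1\equiv y$ and $x_2\equiv y\cdot z\pmod{J}$, one must manufacture some $x_3\in I$ with $x_3\equiv z\pmod{J}$. Compatibility of the congruence with $\cdot$ immediately yields $x_1\cdot z\equiv x_2\pmod{J}$, and closure of $I$ under the operation, together with axioms (I2)--(I4) for $I$, furnishes many candidate elements of $I$ built from $x_1$, $x_2$, and $z$. The delicate step is to combine these ingredients with the cycloid equation $(x_1\cdot y)\cdot(x_1\cdot z)=(y\cdot x_1)\cdot(y\cdot z)$, exploiting that both $x_1\cdot y$ and $y\cdot x_1$ lie in $J$, in order to single out an $x_3\in I$ satisfying the required congruence. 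I expect this ideal-extraction step to be the crux of the proof.
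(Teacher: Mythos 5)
The paper does not actually prove this statement: it is quoted verbatim from \cite{RV_primes} (``Furthermore, in \cite{RV_primes}, the following results are proven''), so there is no internal proof to compare against. Judged on its own terms, your proposal gets the architecture right and matches the natural (and, as far as I can tell, the standard) strategy: the reverse implication via (I1) applied inside $I\vee J$ is correct and complete; the reduction of the forward implication to showing that $K=\pi^{-1}(\pi(I))=\{y\mid \exists x\in I,\ x\equiv y \pmod J\}$ is an ideal is correct; and the verifications that $I,J\subseteq K$, that preimages of ideals are ideals, and that (I2)--(I4) pass from $I$ to $\pi(I)$ (hence to $K$) are all routine and sound.

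The problem is that the one step carrying all the content --- axiom (I1) for $\pi(I)$, equivalently for $K$ --- is not proved; you explicitly defer it (``I expect this ideal-extraction step to be the crux of the proof''), and the ingredients you list do not suffice to close it. Concretely: from $x_1\equiv y$ and $x_2\equiv y\cdot z\pmod J$ you correctly get $x_1\cdot z\equiv x_2\pmod J$, and (I3) gives the natural candidate $(x_1\cdot z)\cdot z\in I$; but this element is congruent to $x_2\cdot z$ modulo $J$, not to $z$. So the argument only replaces the pair $(y,z)$ by the pair $(x_2,z)$ with $x_2\in I$ --- i.e.\ it reduces to the special case where the first witness is exact --- and every further application of (I2)--(I4) plus congruence compatibility reproduces a statement of the same shape ($x\in I$ and $x\cdot z\in K$, want $z\in K$) with new data, so the procedure cycles without terminating. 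Note also that (I1) genuinely cannot follow from (I2)--(I4) alone for an arbitrary subset closed under those rules, so some additional idea exploiting that $\pi(I)$ is the image of an honest ideal (and, in particular, both congruence conditions $x_2\cdot(y\cdot z)\in J$ and $(y\cdot z)\cdot x_2\in J$ together with the cycloid identity) is unavoidable. Until an explicit $x_3\in I$ with $x_3\equiv z\pmod J$ is produced, the forward implication --- and hence the theorem --- is not established.
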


\begin{thm}\label{Thm: The distributive rule of L-algebras}
The lattice of ideals of an L-algebra $X$ is distributive.
\end{thm}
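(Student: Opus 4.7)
The plan is to exploit the fact that the operation $I\cdot J$ already endows $\mathscr{I}(X)$ with a Heyting-algebra structure, so that distributivity becomes a formal consequence of the existence of a relative pseudo-complement.

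I would first observe that $\mathscr{I}(X)$ is a complete lattice: the meet of any family of ideals is their intersection, which is obviously still an ideal, and arbitrary joins exist since $\mathscr{I}(X)$ has a top element $X$ and is closed under arbitrary intersections. In particular, the binary join $I\vee J$ is the smallest ideal containing $I\cup J$; the explicit description from \cref{Thm: The elements in joint ideals} is not needed here.

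Next, I would use the two properties of ``$\cdot$'' stated just before the definition of prime ideals to check the adjunction
\[
K\cap I\subseteq J \;\iff\; K\subseteq I\cdot J,
\]
valid for all ideals $I,J,K$ of $X$. The direction ``$\Leftarrow$'' follows from $(I\cdot J)\cap I\subseteq J$ by intersecting the inclusion $K\subseteq I\cdot J$ with $I$; the direction ``$\Rightarrow$'' is exactly the second stated property. Once this adjunction is established, the map $-\cap I\colon \mathscr{I}(X)\to \mathscr{I}(X)$ is a left adjoint (to $I\cdot -$) on a complete lattice, and therefore preserves all joins.

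Specializing to a binary join yields
\[
I\cap(J\vee K)=(I\cap J)\vee(I\cap K),
\]
which is the desired distributivity. The only real content of this approach is the verification of the adjunction, which is a one-line argument from the two properties already recorded in the text; everything else is a formal consequence of the standard fact that any lattice equipped with a relative pseudo-complement is distributive. The main obstacle, should one prefer a direct element-wise approach via \cref{Thm: The elements in joint ideals}, would be producing, given $y\in I\cap (J\vee K)$ and a witness $x\in J$ with $x\equiv y\pmod K$, a suitable modified witness $x'\in I\cap J$ with $x'\equiv y\pmod{I\cap K}$; the Heyting-algebra route bypasses this construction entirely.
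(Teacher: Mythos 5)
Your argument is correct. Note first that the paper does not actually prove \cref{Thm: The distributive rule of L-algebras}; it is quoted from \cite{RV_primes}, so the only fair comparison is with the facts the paper records about the operation $I\cdot J$ on $\mathscr{I}(X)$, and those facts are exactly what you use. The two properties stated before the definition of prime ideals --- $(I\cdot J)\cap I\subseteq J$, and $K\cap I\subseteq J\Rightarrow K\subseteq I\cdot J$ for every ideal $K$ --- do combine into the adjunction $K\cap I\subseteq J\iff K\subseteq I\cdot J$ precisely as you describe, and since meets in $\mathscr{I}(X)$ are intersections and all joins exist (closure under arbitrary intersections plus the top element $X$), the map $-\cap I$ is a left adjoint and hence preserves binary joins, which is distributivity. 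This is the classical Brouwerian-lattice argument, and it cleanly sidesteps the element-wise manipulation via \cref{Thm: The elements in joint ideals} that you correctly identify as the awkward alternative. The one point worth being honest about is where the real work is hiding: your proof is only as strong as the assertion that $I\cdot J$ is itself an ideal satisfying those two inclusions, which the paper states without proof and which is the substantive content of the cited result in \cite{RV_primes}. Within the framework of what the paper records, however, your derivation is complete.
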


In \cite{Rump_semidirect} they introduce the concept of \emph{semidirect product} of L-algebras that needs also the concept of \emph{operation}
of an L-algebra on another one.

\begin{defn}
    Let $X$ and $Y$ be L-algebras.
    $Y$ \emph{operates} on $X$ if there is a map $\rho:Y\to \End(X)$ such that
    \begin{enumerate}
        \item $\rho_1=\id$
        \item $\rho_{u\cdot v}\circ\rho_u=\rho_{v\cdot u}\circ\rho_v$ for all $u,v\in Y$.
    \end{enumerate}
\end{defn}
For example, an L-algebra $X$ operates on itself via
    $\rho_u(x)=u\cdot x$.
With this, we can now form semidirect products of L-algebras.
\begin{defn}
    Let $X$ and $Y$ be L-algebras such that $Y$ operates on $X$ via $\rho$.
    The \emph{semi-direct product} $X\rtimes_\rho Y$
    is the L-algebra defined on the set $X\times Y$,
    with operation
    \[
    (x,u)\cdot(y,v)=\big(\rho_{u\cdot v}(x)\cdot \rho_{v\cdot u}(y), u\cdot v\big).
    \]
\end{defn}

Note that the semidirect product of KL(CKL or Hilbert)-algebras is, in general, no longer a KL(CKL or Hilbert)-algebra. Therefore, we need to restrict the semidirect product in these cases.

\begin{thm}\label{Thm: semi-direct product of KL-alg}
  Let $X$ and $Y$ be L-algebras such that $Y$ operates on $X$ via $\rho$. 
  The semi-direct product $X\rtimes_\rho Y$ of L-algebras  is a KL-algebra if and only if
  $X$ and $Y$ are KL-algebras such that $x\cdot \rho_u(x)=1$ holds for $x\in X$ and $u\in Y$.
\end{thm}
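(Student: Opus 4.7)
The plan is to compute $(x,u)\cdot\bigl((y,v)\cdot(x,u)\bigr)$ explicitly and to extract both directions from this formula. Setting $t=v\cdot u$ and $s=u\cdot v$, the inner product evaluates to $(\rho_t(y)\cdot\rho_s(x),\,t)$, so the full expression equals
\[
\bigl(\rho_{u\cdot t}(x)\cdot\rho_{t\cdot u}(\rho_t(y)\cdot\rho_s(x)),\; u\cdot t\bigr).
\]

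For the forward direction, assume $X\rtimes_\rho Y$ is KL. Three specialisations of the above identity give the three required conclusions. Taking $x=y=1$ (and using $\rho_a(1)=1$) forces the second coordinate $u\cdot(v\cdot u)$ to be $1$, so $Y$ is KL. Taking $u=v=1$ collapses every $\rho$ to the identity and reduces the expression to $(x\cdot(y\cdot x),1)$, giving KL on $X$. Finally, taking $v=u$ and $y=1$ uses $u\cdot u=1$ to collapse $(1,u)\cdot(x,u)$ to $(x,1)$, after which $(x,u)\cdot(x,1)=(x\cdot\rho_u(x),1)$ forces $x\cdot\rho_u(x)=1$.

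For the converse, assume $X$ and $Y$ are KL with $x\cdot\rho_u(x)=1$. Since $Y$ is KL we have $u\cdot t=1$, so the second coordinate already equals $1$ and $\rho_{u\cdot t}=\id$. Applying the operation axiom $\rho_{a\cdot b}\circ\rho_a=\rho_{b\cdot a}\circ\rho_b$ with $a=t$ and $b=u$ (using $u\cdot t=1$) yields $\rho_{t\cdot u}\circ\rho_t=\rho_u$, and because $\rho_{t\cdot u}$ is an endomorphism the first coordinate rewrites as
\[
x\cdot\bigl(\rho_u(y)\cdot\rho_{t\cdot u}(\rho_s(x))\bigr).
\]
The hypothesis means $x\leq\rho_w(x)$ for every $w\in Y$. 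In particular $x\leq\rho_s(x)$, and by the order-preservation of $\rho_{t\cdot u}$ one has $\rho_{t\cdot u}(x)\leq\rho_{t\cdot u}(\rho_s(x))$; combined with $x\leq\rho_{t\cdot u}(x)$ this gives $x\leq\rho_{t\cdot u}(\rho_s(x))$. The KL law on $X$ applied to $\rho_{t\cdot u}(\rho_s(x))$ then provides $\rho_{t\cdot u}(\rho_s(x))\leq\rho_u(y)\cdot\rho_{t\cdot u}(\rho_s(x))$, and transitivity of $\leq$ concludes that the displayed expression equals $1$.

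The main technical obstacle is the composite $\rho_{t\cdot u}\circ\rho_s=\rho_{(v\cdot u)\cdot u}\circ\rho_{u\cdot v}$, which, unlike $\rho_{t\cdot u}\circ\rho_t$, does not collapse through the operation axiom. The correct response is not to simplify it at all, but to pass to the partial order: the hypothesis $x\cdot\rho_u(x)=1$ is precisely the inequality $x\leq\rho_u(x)$, and monotonicity of $\rho$-endomorphisms combined with the KL law sandwiches $x$ below the full expression without requiring any further algebraic identity.
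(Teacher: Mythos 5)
Your proof is correct. The paper states \cref{Thm: semi-direct product of KL-alg} in the Preliminaries without proof (it is recalled from the literature on semidirect products of L-algebras), so there is no in-paper argument to compare against; your direct verification is the natural one. The three specialisations in the forward direction each check out: $x=y=1$ isolates the second coordinate and gives KL on $Y$; $u=v=1$ kills all the $\rho$'s and gives KL on $X$; and $v=u$, $y=1$ correctly collapses $(1,u)\cdot(x,u)$ to $(x,1)$ and then $(x,u)\cdot(x,1)$ to $\bigl(x\cdot\rho_u(x),1\bigr)$. In the converse, the key moves are all legitimate: $u\cdot(v\cdot u)=1$ handles the second coordinate and turns $\rho_{u\cdot t}$ into the identity; the operation axiom with $a=t$, $b=u$ gives $\rho_{t\cdot u}\circ\rho_t=\rho_{u\cdot t}\circ\rho_u=\rho_u$; and your decision not to simplify $\rho_{t\cdot u}\circ\rho_s$ but instead to pass to the order is exactly right. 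The chain $x\leq\rho_{t\cdot u}(x)\leq\rho_{t\cdot u}(\rho_s(x))\leq\rho_u(y)\cdot\rho_{t\cdot u}(\rho_s(x))$ uses only the hypothesis $x\leq\rho_w(x)$, the fact that L-algebra endomorphisms preserve $\leq$ (since they preserve the operation and hence send $1$ to $1$), the KL law on $X$, and transitivity of $\leq$, all of which are available. No gaps.
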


\begin{defn}\label{Def: operate on KL-alg}
    Let $X$ and $Y$ be KL-algebras.
    $Y$ \emph{operates} on $X$ as KL-algebras if $Y$ operates on $X$ via $\rho$ as L-algebras and
    \[
    x\cdot \rho_u(x)=1,
    \] holds for $x\in X$ and $u\in Y$.
\end{defn}

\begin{defn}\label{Def: operate on CKL-alg}
    Let $X$ and $Y$ be CKL-algebras.
    $Y$ \emph{operates} on $X$ as CKL-algebras if $Y$ operates on $X$ via $\rho$ as KL-algebras and
    \begin{enumerate}
        \item $\rho_u\rho_v=\rho_v\rho_u$ for all $u,v\in Y$.\label{Def: operate on CKL-alg part 1}
        \item $\rho_u(x\cdot y)=x\cdot \rho_u(y)$ for all $u\in Y$ and $x\in X$.\label{Def: operate on CKL-alg part 2}
    \end{enumerate}
\end{defn}

\begin{defn}\label{def: symmetric semi-direct product}
    Let $X$ and $Y$ be CKL-algebras such that $Y$ operates on $X$ via $\rho$ as CKL-algebras.
    The \emph{symmetric semi-direct product} is
    the CKL algebra
    \[
    X\sym_\rho Y=\{(x,u)\in X\rtimes_\rho Y\mid \rho_u(x)=x\}.
    \]
\end{defn}

\begin{defn}
    Let $X$ and $Y$ be Hilbert algebras.
    $Y$ \emph{operates} on $X$ if there is a map $\rho:Y\to \End(X)$ such that
    $Y$ operates on $X$ as CKL-algebras and $\rho_u^2=\rho_u$ for all $u\in Y$.
\end{defn}

The example given before still works with the hypothesis of being Hilbert.
More precisely, any Hilbert algebra $X$ operates on itself as a Hilbert algebra through
 $\rho_u(x)=u\cdot x$.

\begin{defn}
    Let $X$ and $Y$ be Hilbert algebras such that $Y$ operates on $X$ via $\rho$.
    The \emph{symmetric semi-direct product} is
    the Hilbert algebra
    \[
    X\sym_\rho Y=\{(x,u)\in X\rtimes_\rho Y\mid \rho_u(x)=x\}.
    \]
\end{defn}

The concept of self-similarity is introduced in \cite{Rump_selfsimilar}
where it also proves the existence of the self-similar closure of any L-algebra.
\begin{defn}\label{Def: self-similar}
    An L-algebra $X$ is \emph{self-similar} if for every $x\in X$ the left multiplication $\sigma_x$
    induces a bijection between $\dset x$ and $X$.
\end{defn}

\begin{defn}\label{Def: self-similar closure}
    Let $X$ be an L-algebra,
    the \emph{self-similar closure} $S(X)$ of $X$ is
    a self-similar L-algebra with $X$ as an L-subalgebra which generates $S(X)$ as a monoid.
\end{defn}

The construction of the self-similar closure of an L-algebra given in \cite{Rump_Glivenko} uses the following theorems.
\begin{thm}\label{Thm: Extension for free monoid}
Let $(X, \cdot)$ be an L-algebra, and let $M(X)$ be the free monoid generated by $X$ with unit 1 and multiplication denoted by juxtaposition. 
Then the L-algebra operation of $X$ admits 
a unique extension to $M(X)$ such that 
\begin{align*}
& a b \cdot c=a \cdot(b \cdot c) \\
& a \cdot b c=((c \cdot a) \cdot b)(a \cdot c),\\
&1 \cdot a=a,
\end{align*}
for all $a, b, c \in M(X)$.
\end{thm}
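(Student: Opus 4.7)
The plan is to first show that the three relations force a unique reduction down to the original L-algebra operation on $X$, and then take that reduction as a definition and verify the relations in general. Setting $c = 1$ in the second relation yields $a \cdot b = (a \cdot b)(a \cdot 1)$, so any extension must satisfy $a \cdot 1 = 1$. The first identity $ab \cdot c = a \cdot (b \cdot c)$ lets us peel generators off the left: writing $a = x_1 \cdots x_n$ with $x_i \in X$, we get
\[
a \cdot c = x_1 \cdot \bigl(x_2 \cdot \bigl(\cdots (x_n \cdot c)\bigr)\bigr),
\]
reducing the computation to the case $a \in X$. For $x \in X$ and $w \in M(X)$ of length $\geq 2$, writing $w = by$ with $y \in X$ the final letter, the second identity gives $x \cdot w = ((y \cdot x) \cdot b)(x \cdot y)$; since $y \cdot x \in X$, an induction on $|w|$ reduces everything to the single-letter case $x \cdot y$, where the operation is fixed.

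For existence, I would use this recursion as a definition: for $x \in X$ set $x \cdot 1 := 1$, let $x \cdot y$ denote the given L-algebra product when $y \in X$, and put $x \cdot (by) := ((y \cdot x) \cdot b)(x \cdot y)$ for $w = by$ of length $\geq 2$; then extend to arbitrary left operands by $1 \cdot w := w$ and $(xa) \cdot w := x \cdot (a \cdot w)$. The identity $1 \cdot a = a$ is immediate, and the first relation $ab \cdot c = a \cdot (b \cdot c)$ follows by induction on $|a|$ directly from the clause $(xa) \cdot w = x \cdot (a \cdot w)$. The second relation holds by construction whenever the left operand is a single letter of $X$ and the right operand is decomposed with its final letter peeled off, which are precisely the base cases for the general verification.

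The main obstacle is establishing the second identity $a \cdot bc = ((c \cdot a) \cdot b)(a \cdot c)$ for arbitrary $a, b, c \in M(X)$. The natural route is a nested induction on $|a|$ and $|c|$, reducing to the built-in base cases. The inductive step must rearrange nested expressions of the form $x \cdot (y \cdot z)$ that appear on both sides of the target identity, and the indispensable tool here is precisely the cycloid equation \eqref{Cycle Sets eq}, namely $(x \cdot y) \cdot (x \cdot z) = (y \cdot x) \cdot (y \cdot z)$. Without the cycloid equation, the two sides of the proposed identity cannot be matched. I expect this compatibility check, carried out in parallel with verifying that nested applications commute properly under the free-monoid concatenation, to be the technically delicate heart of the argument.
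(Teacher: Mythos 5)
This theorem is quoted in the paper from Rump's work on self-similar closures and is not proved there, so your argument has to stand on its own. Your uniqueness half is complete and correct: setting $c=1$ forces $a\cdot 1=1$, the first identity peels the left operand down to single letters, and the second identity (applied with $c$ the last letter of the right operand) recursively determines $x\cdot w$ by induction on $|w|$, since $y\cdot x\in X$. Your recursive definition for existence is also the right one, and the identities $1\cdot a=a$ and $ab\cdot c=a\cdot(b\cdot c)$ do follow immediately from it.

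The gap is that the verification of $a\cdot bc=((c\cdot a)\cdot b)(a\cdot c)$ for arbitrary $a,b,c$ --- which you yourself call ``the technically delicate heart of the argument'' --- is never carried out, and the plan you give for it rests on a false premise. The cycloid equation is \emph{not} used anywhere in this verification: the extension exists for any pointed magma satisfying $1\cdot x=x$ and $x\cdot 1=1$, and the induction closes purely formally. Concretely, one first proves the identity for single-letter $a=x$ by induction on $|c|$ (writing $c=c'y$, both sides reduce to $((c'\cdot(y\cdot x))\cdot b)\,((y\cdot x)\cdot c')\,(x\cdot y)$ using only the defining clause and the first identity), and then for $a=xa'$ by induction on $|a|+|c|$: the left side becomes
\[
x\cdot\bigl(((c\cdot a')\cdot b)(a'\cdot c)\bigr)=\bigl(((a'\cdot c)\cdot x)\cdot((c\cdot a')\cdot b)\bigr)\,\bigl(x\cdot(a'\cdot c)\bigr),
\]
while the right side equals the same expression because $c\cdot(xa')=((a'\cdot c)\cdot x)(c\cdot a')$ by the inductive hypothesis applied to the triple $(c,x,a')$, whose weight $|c|+|a'|$ is strictly smaller. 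No rearrangement via the cycloid equation ever occurs; that axiom only enters later, when one shows that the congruence $\approx$ produces a self-similar L-algebra $S(X)$. So your strategy would succeed, but the proof as written is incomplete precisely at its hardest step, and the assertion that the two sides ``cannot be matched'' without the cycloid equation indicates the inductive step was not actually attempted.
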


\begin{thm}
     Let $X$ be an L-algebra. The self-similar closure of $X$ is defined as the quotient. 
     \[
     S(X)=M(X)/{\approx} ,
     \]
     where $a \approx b$ if and only if
     \[
     (c \cdot a) \cdot d=(c \cdot b) \cdot d,
     \]
     for all $c, d \in M(X)$.
\end{thm}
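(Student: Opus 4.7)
The plan is to verify that $S(X) := M(X)/{\approx}$, with the induced operations, satisfies the three conditions of \cref{Def: self-similar closure}: self-similarity as an L-algebra, containing $X$ as an L-subalgebra, and being monoid-generated by $X$.

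First I would check that $\approx$ is a congruence on $M(X)$ for both the monoid product and the extended L-operation from \cref{Thm: Extension for free monoid}. Compatibility reduces, via the key identities $ab\cdot c = a\cdot(b\cdot c)$ and $a\cdot bc = ((c\cdot a)\cdot b)(a\cdot c)$, to instances of equalities already guaranteed by $\approx$. Consequently, $S(X)$ inherits both operations, and the identities $1\cdot a = a$, $a\cdot 1 = a\cdot a = 1$ and the cycloid equation, which already hold on $M(X)$ by induction from the extension identities, pass to the quotient. Antisymmetry in $S(X)$ is precisely the axiom that $\approx$ was designed to enforce: if $[a]\cdot[b] = [b]\cdot[a] = [1]$, then unwinding the defining relation of $\approx$ yields $a\approx b$, hence $[a] = [b]$. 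Thus $S(X)$ is an L-algebra.

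For the embedding of $X$, I would show that if $a,b\in X$ satisfy $a\approx b$, then specializing $c = 1$ in the defining relation gives $a\cdot d = b\cdot d$ for every $d\in M(X)$; taking $d = a$ and $d = b$ yields $a\cdot b = b\cdot a = 1$ in $X$, so $a = b$ by antisymmetry of the order on $X$. That the inclusion is an L-homomorphism is immediate, and the generation condition holds since $M(X)$ is the free monoid on $X$, a property that passes through the quotient.

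The main obstacle is self-similarity: for every $[a]\in S(X)$, one must show that $\sigma_{[a]}\colon \dset [a]\to S(X)$ is a bijection. Injectivity is a consequence of the antisymmetry already established. For surjectivity, given $[c]\in S(X)$, a preimage $[b]\in \dset [a]$ must be constructed; the natural strategy is to exploit the monoid structure of $M(X)$, where the identity $ab\cdot c = a\cdot(b\cdot c)$ permits $\sigma_{[a]}$ to be inverted by left-multiplication with an appropriate word formed from representatives of $a$ and $c$. Verifying that this candidate satisfies both $[b]\cdot[a] = [1]$ and $[a]\cdot[b] = [c]$ modulo $\approx$ is a delicate calculation that repeatedly invokes the extension identities of \cref{Thm: Extension for free monoid}, and constitutes the technical heart of the proof.
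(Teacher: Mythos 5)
This statement is one the paper does not prove at all: it is recalled verbatim from Rump's construction of the self-similar closure (the paper cites \cite{Rump_Glivenko} and uses \cref{Thm: Extension for free monoid} and this theorem as black boxes). So there is no in-paper proof to compare against; your proposal has to stand on its own, and as it stands it is a correct skeleton of Rump's argument rather than a proof. The architecture is right (congruence, quotient L-algebra, embedding of $X$, generation, self-similarity), and the embedding step is actually complete and correct: setting $c=1$ in the defining relation of $\approx$ and then $d=a$, $d=b$ does give $a\cdot b=b\cdot a=1$ in $X$.

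The genuine gaps are the three places where you assert rather than argue. First, the cycloid equation and the identities $a\cdot 1=a\cdot a=1$ are \emph{not} among the defining identities of the extension in \cref{Thm: Extension for free monoid}; establishing them on all of $M(X)$ requires a double induction on word lengths, and everything downstream (antisymmetry of $S(X)$, injectivity of $\sigma_{[a]}$) depends on it. Second, your antisymmetry step is subtler than ``unwinding'': the hypothesis is $a\cdot b\approx 1$ and $b\cdot a\approx 1$, not equality in $M(X)$, and deducing $(c\cdot a)\cdot d=(c\cdot b)\cdot d$ for \emph{all} $c,d$ needs the cycloid equation applied twice (once to get $a\cdot d=b\cdot d$, once to propagate to $c\cdot a$ versus $c\cdot b$), interleaved with the congruence property you have not yet verified. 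Third, and most seriously, surjectivity of $\sigma_{[a]}\colon\dset[a]\to S(X)$ is the content of self-similarity, and you defer it entirely: no explicit candidate preimage of $[c]$ is written down (it should be the class of a monoid word built from $a$ and $c$, and one must verify both $[b]\le[a]$ and $[a]\cdot[b]=[c]$ using the identity $a\cdot bc=((c\cdot a)\cdot b)(a\cdot c)$). Labelling this ``the technical heart'' is accurate, but a proof must contain its heart; as submitted, the central claim of the theorem is unproven.
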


Moreover, L-algebra maps with codomain a self-similar one can be extended to the self-similar closure of the domain.
Hence, we have a functor $S$ that
is left adjoint to the inclusion of the category of self-similar L-algebras in the category of L-algebras.
\begin{pro}\label{Prop: universal property for S}
    Let $f: X \rightarrow H$ be a morphism of L-algebras, where $H$ is self-similar. Then $f$ has a unique extension to a morphism 
    $S(f): S(X) \rightarrow H$ of L-algebras. Moreover, every such extension $S(f)$ is multiplicative.
\end{pro}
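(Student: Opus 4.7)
The plan is to exploit the explicit description $S(X) = M(X)/{\approx}$ together with the rigidity of self-similar L-algebras. First, I would construct $S(f)$ by extending through the free monoid: since $H$ is self-similar, $H \cong S(H)$ carries a canonical compatible monoid structure, so the universal property of $M(X)$ yields a unique monoid morphism $\tilde f \colon M(X) \to H$ given by $\tilde f(x_1 \cdots x_n) = f(x_1) \cdots f(x_n)$. A short induction on word lengths, using the three identities $ab \cdot c = a \cdot (b \cdot c)$, $a \cdot bc = ((c \cdot a) \cdot b)(a \cdot c)$ and $1 \cdot a = a$ of \cref{Thm: Extension for free monoid}, shows that $\tilde f$ is also a morphism of L-algebras for the extended operation on $M(X)$.

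Next, I would show that $\tilde f$ descends to $S(X)$, i.e., that $a \approx b$ implies $\tilde f(a) = \tilde f(b)$. Applying the L-algebra-morphism property to the defining relations $(c \cdot a) \cdot d = (c \cdot b) \cdot d$ translates them into corresponding identities in $H$; the self-similarity of $H$ (in particular the fact that $\sigma_x = \sigma_y$ forces $x = y$, which follows from antisymmetry of the induced order) is then used to extract the desired equality $\tilde f(a) = \tilde f(b)$. This yields an L-algebra morphism $S(f) \colon S(X) \to H$ extending $f$.

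For the multiplicativity and uniqueness clauses, the key observation is that in any self-similar L-algebra $L$ the monoid product $ab$ is entirely determined by the L-algebra structure: it is the unique element satisfying $ab \leq a$ and $a \cdot (ab) = b$. Since L-algebra morphisms preserve both $\leq$ and $\cdot$, any L-algebra morphism between self-similar L-algebras is automatically multiplicative; in particular any L-algebra extension $S(f)$ is. Uniqueness then follows, because $X$ generates $S(X)$ as a monoid, so a multiplicative L-algebra morphism from $S(X)$ to $H$ is determined by its values on $X$.

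The main technical obstacle I expect to lie in the descent step: deducing $\tilde f(a) = \tilde f(b)$ in $H$ from identities of the form $(\tilde f(c) \cdot \tilde f(a)) \cdot \tilde f(d) = (\tilde f(c) \cdot \tilde f(b)) \cdot \tilde f(d)$ which a priori only range over elements of $H$ in the image of $\tilde f$, rather than over all of $H$. Handling this carefully — perhaps by first proving that the sub-L-algebra generated by $\tilde f(M(X))$ inherits enough self-similarity, or by exhibiting the identification with a suitable quotient of $M(X)$ that already embeds in $H$ — is the delicate point of the argument.
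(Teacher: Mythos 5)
The paper does not actually prove this proposition: it is recalled in the preliminaries as a known result from Rump's theory of self-similar closures, so there is no in-paper proof to compare against. Judged on its own merits, your outline is essentially the standard argument and is sound. In particular, the multiplicativity/uniqueness step is exactly right: in a self-similar L-algebra the product $ab$ is the unique $z\leq a$ with $a\cdot z=b$, both conditions are preserved by L-algebra morphisms, so any L-algebra morphism into a self-similar target is automatically multiplicative, and uniqueness follows since $X$ generates $S(X)$ as a monoid.

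Two remarks. First, the ``main technical obstacle'' you flag in the descent step is not actually an obstacle: you never need the defining relations of $\approx$ to range over all of $H$. Specializing $c=1$ and $d\in\{a,b\}$ in $(c\cdot a)\cdot d=(c\cdot b)\cdot d$, and using that $M(X)$ satisfies $1\cdot a=a$ and $a\cdot a=1$, gives $a\cdot b=b\cdot a=1$ already in $M(X)$; applying $\tilde f$ and the antisymmetry axiom of the L-algebra $H$ yields $\tilde f(a)=\tilde f(b)$ at once. Second, there is a small gap you should make explicit in the other direction: your induction showing that $\tilde f$ respects the extended operation on $M(X)$ uses the identities $ab\cdot c=a\cdot(b\cdot c)$ and $a\cdot bc=((c\cdot a)\cdot b)(a\cdot c)$ \emph{inside $H$}, whereas \cref{Thm: Extension for free monoid} only asserts them for $M(X)$. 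You need the (true, but separate) fact that a self-similar L-algebra with its canonical monoid structure satisfies these same identities — e.g.\ via $H\cong S(H)=M(H)/{\approx}$ — before the induction closes.
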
 

\begin{pro}\label{Prop: Self-silimar for KL-algebras}
    Let $X$ be a KL-algebra. Then $S(X)$ is a KL-algebra.
\end{pro}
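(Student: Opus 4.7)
The plan is to verify the KL identity $\alpha \cdot (\beta \cdot \alpha) = 1$ in $S(X)$ for every $\alpha, \beta \in S(X)$. Since $X$ generates $S(X)$ as a monoid, each element is the image of a word $a = a_1 \cdots a_n \in M(X)$ with $a_i \in X$, so I would argue by induction on the length of a representative of $\alpha$, working with the extension formulas of \cref{Thm: Extension for free monoid} and exploiting the full L-algebra structure of $S(X)$---notably the cycloid equation and the transitivity of $\leq$, which do not hold a priori in $M(X)$ but are available after passing to the quotient.

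The base case $\alpha \in X$ would follow from a secondary induction on $|b|$ which simultaneously shows that $b \cdot a$ lies in the image of $X$ and that $a \leq b \cdot a$: peeling off a letter via $b_1 b' \cdot a = b_1 \cdot (b' \cdot a)$ reduces to the original operation of $X$, where KL gives $b' \cdot a \leq b_1\cdot(b'\cdot a)$ and transitivity closes the step. Hence $a \cdot (b \cdot a) = 1$ already in $X$, and \emph{a fortiori} in $S(X)$.

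For the inductive step I would write $a = a_1 a'$ with $a_1 \in X$ and $|a'| \geq 1$, and use the extension rules to compute
\[
b \cdot (a_1 a') = \bigl((a' \cdot b) \cdot a_1\bigr)(b \cdot a'),
\]
then apply $u \cdot (PQ) = ((Q \cdot u) \cdot P)(u \cdot Q)$ with $u = a'$, $P = (a'\cdot b)\cdot a_1$, $Q = b\cdot a'$. The inductive hypothesis $a' \cdot (b \cdot a') = 1$ makes the right-hand monoid factor trivial in $S(X)$, so after simplification
\[
a \cdot (b \cdot a) \;=\; a_1 \cdot \bigl(R \cdot (T \cdot a_1)\bigr),
\qquad R = (b \cdot a') \cdot a',\; T = a' \cdot b.
\]
It then suffices to show $a_1 \leq R \cdot (T \cdot a_1)$. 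Applying the cycloid equation with $x = R$, $y = a_1$, $z = T \cdot a_1$, and using the base case on $a_1$ and $T$ together with the axiom $x \cdot 1 = 1$, the right-hand side of the cycloid becomes $1$, yielding $R \cdot a_1 \leq R \cdot (T \cdot a_1)$. Combining this with $a_1 \leq R \cdot a_1$ (base case on $a_1$ and $R$) and transitivity of $\leq$ in $S(X)$ gives the claim.

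The main obstacle I anticipate is the bookkeeping in the inductive step: when the inductive hypothesis is invoked to replace $a' \cdot (b \cdot a')$ by $1$ inside a monoid product, one must confirm that this replacement is legitimate and that the trivial factor indeed collapses. This is the reason the argument is cleanest when performed throughout in $S(X)$---where $1$ is simultaneously the L-algebra unit and the monoid identity---rather than literally in $M(X)$, where these identifications are only modulo $\approx$.
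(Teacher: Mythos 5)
The paper itself gives no proof of this proposition: it is stated in the Preliminaries as a known result imported from Rump's work on self-similar closures, so there is no in-paper argument to compare against. Your proof is correct and self-contained. The inner induction for the base case is sound: for $a\in X$ and a word $b=b_1b'$, the rule $b_1b'\cdot a=b_1\cdot(b'\cdot a)$ from \cref{Thm: Extension for free monoid} keeps $b\cdot a$ inside $X$, where the KL identity gives $b'\cdot a\le b_1\cdot(b'\cdot a)$ and transitivity of $\le$ closes the step. The outer inductive step correctly unwinds $b\cdot(a_1a')=\bigl((a'\cdot b)\cdot a_1\bigr)(b\cdot a')$ and then $(a_1a')\cdot(PQ)=a_1\cdot\bigl(((Q\cdot a')\cdot P)(a'\cdot Q)\bigr)$, kills the factor $a'\cdot Q=a'\cdot(b\cdot a')$ by the inductive hypothesis, and reduces the claim to $a_1\le R\cdot(T\cdot a_1)$, which your cycloid computation (using $a_1\cdot(T\cdot a_1)=1$ from the base case, $x\cdot 1=1$, then $a_1\le R\cdot a_1\le R\cdot(T\cdot a_1)$) establishes. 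The one delicate point is exactly the one you flag: collapsing the trivial monoid factor, and invoking the cycloid equation and transitivity, are only legitimate after passing to $S(X)$, where the logical unit coincides with the monoid identity and the extension formulas descend from $M(X)$; carrying the whole computation out in $S(X)$ while inducting on the length of a chosen representative word is the right way to handle this, and it is essentially the standard argument from the literature.
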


However, if $X$ is a CKL-algebra, $S(X)$ is not necessarily a CKL-algebra, as shown by the following counterexample.

\begin{exa}\label{Exa: the Self-similar of CKL-algebra is not CKL-algebra}
    Let $X=\{1,x,y\}$ with $x\cdot y=y\cdot x=x$.
    It is easy to prove that $X$ is a CKL-algebra.
    But, on the other hand, $S(X)$ is not CKL as $x\cdot(y\cdot x^2)\neq y\cdot(x\cdot x^2)$:
    \begin{align*}
        x\cdot(y\cdot x^2)&=
        x\cdot \big(((x\cdot y)\cdot x)(y\cdot x)\big)=
        x\cdot \big((x\cdot x)x\big)=
        x\cdot x=
        1;\\
        y\cdot(x\cdot x^2)&=
        y\cdot \big(((x\cdot x)\cdot x)(x\cdot x)\big)=
        y\cdot \big((1\cdot x)1\big)=
        y\cdot x=
        x.
    \end{align*}
\end{exa}

\section{Semidirect product and self-similarity}\label{Section: Semidirect product and self-similarity}
In this section, we investigate the interplay between semidirect products and self-similarity. 

The following lemma describes the downsets of elements of semidirect products.

\begin{lem}
\label{X and Y inside semidirect}
    Let $X$ and $Y$ be L-algebras such that $Y$ operates on $X$ via $\rho$.
    Then for every $(x,u)\in X\rtimes_\rho Y$ we have that
    \begin{enumerate}
    \item $\dset (x,u)=\{(y,v)\in X\rtimes_\rho Y\mid v\in\dset u \text{ and } y\in \dset \rho_{u\cdot v}(x)\}$.
    \label{part1}
        \item $\dset(1,u)=X\times \dset u$ and $\sigma_{(1,u)}(y,v)=(y,u\cdot v)$,
        for every $(y,v)\in \dset(1,u)$.
        \label{part2}
        \item $\dset(x,1)=\bigcup_{v\in Y}\dset \rho_v(x)\times \{v\}$ and $\sigma_{(x,1)}(y,v)=(\rho_v(x)\cdot y,v)$,
        for every $(y,v)\in \dset(x,1)$.
        \label{part3}
        \item $(1,u)\cdot(x,u)=(x,1)$ and $X\rtimes_\rho Y=\{(x,1)(1,u)~|~x\in X,u\in Y\}$.
        \label{part4}
    \end{enumerate}
\end{lem}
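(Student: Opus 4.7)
The plan is to treat Parts 1--3 and the first equation of Part 4 as direct computations with the semidirect product operation, and to reserve the self-similar closure machinery for the final set equality of Part 4.

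For Part 1, expanding $(y,v)\cdot(x,u)=\bigl(\rho_{v\cdot u}(y)\cdot\rho_{u\cdot v}(x),\,v\cdot u\bigr)$ and requiring this to equal $(1,1)$ yields $v\cdot u=1$ on the second coordinate, and then, after substituting $\rho_{v\cdot u}=\rho_1=\id$, the first-coordinate condition becomes $y\cdot\rho_{u\cdot v}(x)=1$. Part 2 is the specialization $x=1$: the condition $y\leq\rho_{u\cdot v}(1)=1$ holds automatically since $\rho_{u\cdot v}$ is an L-algebra endomorphism. Part 3 is the specialization $u=1$: the condition $v\cdot 1=1$ holds for every $v\in Y$. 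The formulas for $\sigma_{(1,u)}$ and $\sigma_{(x,1)}$ follow by plugging into the semidirect product formula, and likewise $(1,u)\cdot(x,u)=\bigl(\rho_1(1)\cdot\rho_1(x),\,1\bigr)=(x,1)$.

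The substantive part is the set equality in Part 4: here $(x,1)(1,u)$ denotes the monoid product in the self-similar closure $S(X\rtimes_\rho Y)$, and the assertion amounts to $(x,1)(1,u)=(x,u)$ inside $S(X\rtimes_\rho Y)$. The plan is to exhibit both elements as members of $\dset(1,u)\subseteq S(X\rtimes_\rho Y)$ having the same image $(x,1)$ under $\sigma_{(1,u)}$, and then invoke self-similarity of $S(X\rtimes_\rho Y)$, by which $\sigma_{(1,u)}$ is a bijection on its downset and in particular injective.

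That $(x,u)\in\dset(1,u)$ is immediate from Part 2, and $\sigma_{(1,u)}((x,u))=(x,1)$ is the first equation of Part 4. For the monoid product, the first monoid relation of \cref{Thm: Extension for free monoid} gives $ab\cdot b = a\cdot(b\cdot b)=a\cdot 1 = 1$, so $(x,1)(1,u)\leq(1,u)$. To compute $\sigma_{(1,u)}\bigl((x,1)(1,u)\bigr)$, I apply the second relation $a\cdot bc=((c\cdot a)\cdot b)(a\cdot c)$ with $a=c=(1,u)$ and $b=(x,1)$; since $(1,u)\cdot(1,u)=(1,1)$ is the identity, the expression collapses to $(x,1)(1,1)=(x,1)$. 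The main obstacle is conceptual---recognizing that the second half of Part 4 really lives in the self-similar closure rather than in $X\rtimes_\rho Y$---together with the choice of monoid relation that makes both factors degenerate. After that, self-similarity finishes the proof.
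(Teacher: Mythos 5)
Your proposal is correct. For parts (1)--(3) and the identity $(1,u)\cdot(x,u)=(x,1)$ you do exactly what the paper does: expand the semidirect product operation, use $v\cdot u=1\Rightarrow\rho_{v\cdot u}=\id$, and specialize to $x=1$ resp.\ $u=1$. The only place where you genuinely diverge is the set equality in part (4), which the paper dismisses with ``directly calculated.'' You correctly identify that the juxtaposition $(x,1)(1,u)$ only makes sense in the self-similar closure $S(X\rtimes_\rho Y)$, and you prove $(x,1)(1,u)=(x,u)$ by showing both elements lie in $\dset(1,u)$ (using $ab\cdot b=a\cdot(b\cdot b)=a\cdot 1=1$) and have the same image $(x,1)$ under $\sigma_{(1,u)}$ (using $a\cdot bc=((c\cdot a)\cdot b)(a\cdot c)$ with $a=c=(1,u)$), then invoking injectivity of $\sigma_{(1,u)}$ on its downset in the self-similar closure. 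This is a legitimate and self-contained route; the paper's intended ``direct calculation'' presumably amounts to the explicit monoid formula $(z,t)(x,u)=(z\rho_t(x),tu)$, which is only established later in \cref{semidirect_and_selfsimilarity} and only for self-similar factors, so your argument arguably fills a genuine gap rather than just re-deriving the paper's step. The one cosmetic caveat is that your computations for $(x,u)$ via part (2) are carried out in $X\rtimes_\rho Y$ while the injectivity is used in $S(X\rtimes_\rho Y)$; this is harmless because $X\rtimes_\rho Y$ embeds as an L-subalgebra, but it is worth saying explicitly that the order and the products $(1,u)\cdot(x,u)$ agree under this embedding.
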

\begin{proof}~
\begin{enumerate}[font=\itshape]
    \item Let $(x,u), (y,v)\in X\rtimes_\rho Y$.
    Then $(y,v)\leq (x,u)$ if and only if
    \[
    (1,1)=(y,v)\cdot(x,u)=\big(\rho_{v\cdot u}(y)\cdot \rho_{u\cdot v}(x), v\cdot u\big).
    \]
    The last condition is equivalent to $v\cdot u=1$ and $\rho_{v\cdot u}(y)\cdot \rho_{u\cdot v}(x)=1$,
    i.e. $v\cdot u=1$ and $y\cdot \rho_{u\cdot v}(x)=1$.
    \item  Let $(x,v)\in X\rtimes_\rho Y$. Then $(x,v)\leq (1,u)$ if and only if 
    $$ (1,1)=(x,v)\cdot(1,u)=(\rho_{v\cdot u}(x)\cdot\rho_{u\cdot v}(1),v\cdot u)=(1,v\cdot u).$$
    Thus $\dset(1,u)=X\times \dset u$. 
    For every $(y,v)\leq(1,u)$, we have that $v\cdot u=1$ and $\rho_{u\cdot v}(1)=1$. 
    Thus, $\sigma_{(1,u)}(y,v)=(y,u\cdot v)$.
    \item  Let every $(y,v)\in X\rtimes_\rho Y $.
    Then $(y,v)\leq (x,1)$  if and only if $y\cdot\rho_v(x)=1$.
    \item The result is directly calculated.\qedhere
\end{enumerate}
\end{proof}

Thanks to \cref{X and Y inside semidirect}, we are now able to settle
when a semidirect product of two L-algebras is self-similar and to explicitly compute the monoid operation in this case.
Moreover, with an inductive argument, we can extend any action of L-algebras to their self-similar closures.

\begin{pro}
\label{semidirect_and_selfsimilarity}
    Let $X$ and $Y$ be L-algebra such that $Y$ operates on $X$ via $\rho$.
    Then $X\rtimes_\rho Y$ is self-similar 
    if and only if $X$ and $Y$ are self-similar.
    Moreover, in this case, the monoid operation on $X\rtimes_\rho Y$ is given by
    \[
    (z,t)(x,u)=(z\rho_t(x),tu).
    \]
\end{pro}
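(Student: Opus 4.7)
The plan is to prove the biconditional in both directions, and then verify the monoid-product formula by reducing it to a handful of simple identities. For the direction $(\Leftarrow)$, I would fix $(x,u) \in X \rtimes_\rho Y$ and show $\sigma_{(x,u)} \colon \dset(x,u) \to X \rtimes_\rho Y$ is a bijection. By the first item of \cref{X and Y inside semidirect}, every $(y,v) \in \dset(x,u)$ satisfies $v \leq u$ in $Y$, hence $v \cdot u = 1$ and $\rho_{v \cdot u}(y) = y$; consequently $\sigma_{(x,u)}(y,v) = (\rho_{u \cdot v}(x) \cdot y,\, u \cdot v)$. For any target $(a,w) \in X \rtimes_\rho Y$, self-similarity of $Y$ yields a unique $v \in \dset u$ with $u \cdot v = w$, and then self-similarity of $X$ yields a unique $y \in \dset \rho_{u \cdot v}(x)$ with $\rho_{u \cdot v}(x) \cdot y = a$, giving the required bijection.

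For the direction $(\Rightarrow)$, I would extract self-similarity of the factors from two specific instances. The second item of \cref{X and Y inside semidirect} gives $\sigma_{(1,u)}(y,v) = (y, u \cdot v)$ on the domain $X \times \dset u$, so bijectivity of $\sigma_{(1,u)}$ immediately forces $\sigma_u \colon \dset u \to Y$ to be bijective. Similarly, by the third item, restricting $\sigma_{(x,1)}$ to the slice $\dset x \times \{1\}$ recovers $\sigma_x \colon \dset x \to X$ as a bijection onto $X \times \{1\}$.

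For the monoid-product formula, once $X \rtimes_\rho Y$, $X$, and $Y$ are all self-similar, each carries a monoid structure characterized by the extension rule $ab \cdot c = a \cdot (b \cdot c)$ of \cref{Thm: Extension for free monoid}. I would first check the submonoid identities $(x,1)(y,1) = (xy,1)$ and $(1,u)(1,v) = (1,uv)$, which follow from the fact that the embedded copies of $X$ and $Y$ are L-subalgebras isomorphic to the originals, together with \cref{Prop: universal property for S}. Next, I would verify the decomposition $(x,u) = (x,1)(1,u)$ directly from the extension rule: expanding $(x,1)(1,u) \cdot (a,b) = (x,1) \cdot \big((1,u) \cdot (a,b)\big)$ with the semidirect-product operation yields $(x,u) \cdot (a,b)$, so by the uniqueness property of the monoid operation the two elements coincide. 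The same strategy applied to $(1,t)(x,1) \cdot (y,v)$ yields the commutation rule $(1,t)(x,1) = (\rho_t(x), t)$, where the crucial ingredient is the operation axiom $\rho_{v \cdot t} \circ \rho_v = \rho_{t \cdot v} \circ \rho_t$ together with the fact that $\rho_{v \cdot t}$ is an L-algebra endomorphism of $X$. Assembling these identities via associativity then gives
\[
(z,t)(x,u) = (z,1)(1,t)(x,1)(1,u) = (z,1)(\rho_t(x), t)(1,u) = (z\rho_t(x), tu).
\]

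The main obstacle will be the commutation rule $(1,t)(x,1) = (\rho_t(x), t)$: this is the single step where the semidirect-product data genuinely intervenes, and the identification requires threading the operation axiom for $\rho$ through the expansion of both the L-algebra operation in $X \rtimes_\rho Y$ and the monoid-extension rule. The remaining pieces are short universal-property arguments or direct calculations.
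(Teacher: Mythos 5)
Your proof of the equivalence is essentially the paper's: both directions rest on the same three items of \cref{X and Y inside semidirect}, the same two test elements $(1,u)$ and $(x,1)$ for the forward implication, and the same two-step lift (first the unique $v\in\dset u$ with $u\cdot v=t$, then the unique $y\in\dset\rho_t(x)$ with $\rho_t(x)\cdot y=z$) for the converse. Where you genuinely diverge is the ``moreover'' clause. The paper reads the product formula off its converse computation: once one knows that the unique $(y,v)\in\dset(x,u)$ with $\sigma_{(x,u)}(y,v)=(z,t)$ is $v=\sigma_u^{-1}(t)$ and $y=\sigma_{\rho_t(x)}^{-1}(z)$, the identity $(z,t)(x,u)=(z\rho_t(x),tu)$ is immediate from the fact that in a self-similar L-algebra the monoid product is $ab=\sigma_b^{-1}(a)$ (which follows from $a\cdot(ba)=b$ and $(ba)\cdot a=1$ in \cref{Thm: Extension for free monoid}). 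You instead reassemble the product from the generators $(x,1)$ and $(1,u)$: multiplicativity of the two embeddings via \cref{Prop: universal property for S}, the decomposition $(x,u)=(x,1)(1,u)$, and the commutation rule $(1,t)(x,1)=(\rho_t(x),t)$, each certified by comparing $\sigma$-maps through $ab\cdot c=a\cdot(b\cdot c)$ and the cancellation ``$\sigma_c=\sigma_{c'}$ implies $c=c'$''. I checked the commutation step: it does reduce to $\rho_{v\cdot t}\circ\rho_v=\rho_{t\cdot v}\circ\rho_t$ together with $\rho_{v\cdot t}\in\End(X)$, exactly as you predict, and the final assembly closes (note that the intermediate product $(z,1)(\rho_t(x),t)=(z\rho_t(x),t)$ is not one of your three identities verbatim, but it follows from them and associativity; also your submonoid identity for $X$ needs $\rho_b$ to be multiplicative, which again comes from \cref{Prop: universal property for S}). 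Your route is longer but makes the semidirect shape of the product and the role of the operation axiom transparent; the paper's is shorter but leaves the passage from $\sigma_{(x,u)}^{-1}$ to the monoid product implicit.
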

\begin{proof}
Let's first assume that $X\rtimes_\rho Y$ is self-similar.
Then, in particular, for every $u\in Y$, the map
\[
\sigma_{(1,u)}:\dset(1,u)\to X\rtimes_{\rho} Y
\]
is bijective.
Hence, by \cref{X and Y inside semidirect}(\ref{part2}), the map
\[
\sigma_u:\dset u\to Y
\]
is also bijective.
Therefore, $Y$ is self-similar.
Let us now consider $x\in X$.
Since $X\rtimes_\rho Y$ is self-similar, 
the map
\[
\sigma_{(x,1)}:\dset(x,1)\to X\rtimes_{\rho} Y
\]
is bijective.
Thus, by \cref{X and Y inside semidirect}(\ref{part3}),
there exists a unique $(y,v)\in X\rtimes_\rho Y$ such that $y\in \dset \rho_{v}(x)$ and 
$(\rho_{v}(x)\cdot y,v)=(z,1)$.
But the $v$ is necessarily equal to 1.
Thus we proved that for every $z\in X$ there exists a unique $y\in X$ such that $y\in \dset x$ and 
$x\cdot y=\rho_{1}(x)\cdot y=z$.
Hence, the map
\[
\sigma_x:\dset x\to X
\]
is also bijective.

Suppose now that $X$ and $Y$ are self-similar and
let $(x,u),(z,t)\in X\rtimes_\rho Y$.
Since $Y$ is self-similar, there exists a unique $v\in \dset u$ such that $u\cdot v=\sigma_u(v)=t$.
Now, by self-similarity of $X$,
there exists a unique $y\in\dset\rho_{u\cdot v}(x)=\dset\rho_{t}(x)$ such that 
$\sigma_{\rho_{u\cdot v}(x)}(y)=z$.
Therefore, by \cref{X and Y inside semidirect}(\ref{part1}), we proved that there exists a 
unique $(y,v)\in\dset (x,u)$ such that 
\[
\sigma_{(x,u)}(y,v)
=\big(\rho_{u\cdot v}(x)\cdot \rho_{v\cdot u}(y), u\cdot v\big)
=\big(\sigma_{\rho_t(x)}(y), t\big)
=(z,t).
\]
Therefore, we proved that $X\rtimes_\rho Y$ 
is self-similar.
\end{proof}

\begin{pro}
\label{action of S(X)}
    Let $X$ and $Y$ be L-algebras such that $Y$ operates on $X$ via $\rho$.
    Then $S(Y)$ operates on $S(X)$ via a map 
    $\Tilde{\rho}$ such that 
    $\Tilde{\rho}_u(x)=\rho_u(x)$ for all $x\in X$ and $u\in Y$.
\end{pro}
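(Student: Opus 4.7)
The plan is to construct $\tilde{\rho}$ in two stages: first extend $\rho$ to an operation $\bar{\rho}$ of $Y$ on the self-similar closure $S(X)$, then, using that $S(Y)$ is generated as a monoid by $Y$, extend $\bar{\rho}$ along monoid products to the desired operation of $S(Y)$ on $S(X)$.

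For the first stage, for each $u \in Y$ the map $\rho_u : X \to X$, composed with the inclusion $X \hookrightarrow S(X)$, is an L-algebra morphism into the self-similar L-algebra $S(X)$. By \cref{Prop: universal property for S} it admits a unique multiplicative extension $\bar{\rho}_u : S(X) \to S(X)$, yielding a map $\bar{\rho} : Y \to \End(S(X))$. I would then verify that $\bar{\rho}$ is an L-algebra operation: the identity $\bar{\rho}_1 = \id_{S(X)}$ holds by uniqueness because both $\bar{\rho}_1$ and $\id_{S(X)}$ are multiplicative extensions of $\id_X$, and the cycloid-type relation $\bar{\rho}_{u\cdot v}\circ \bar{\rho}_u = \bar{\rho}_{v\cdot u}\circ \bar{\rho}_v$ holds because both sides are multiplicative L-algebra endomorphisms of $S(X)$ that agree on the generating set $X$ (by the assumption on $\rho$), hence agree on $S(X)$ by the uniqueness part of \cref{Prop: universal property for S}.

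For the second stage, since $S(Y)$ is generated as a monoid by $Y$, every element $w \in S(Y)$ admits a (not necessarily unique) factorisation $w = u_1 u_2 \cdots u_n$ with $u_i \in Y$, and I would set
\[
\tilde{\rho}_w := \bar{\rho}_{u_1} \circ \bar{\rho}_{u_2} \circ \cdots \circ \bar{\rho}_{u_n} .
\]
The principal obstacle is to show that this is independent of the chosen factorisation, equivalently that the monoid map $M(Y) \to \End(S(X))$ sending $u_1\cdots u_n$ to $\bar{\rho}_{u_1}\circ\cdots\circ\bar{\rho}_{u_n}$ descends to $S(Y) = M(Y)/{\approx}$. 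My approach is to combine the extension rules $ab\cdot c = a \cdot (b \cdot c)$ and $a \cdot bc = ((c\cdot a)\cdot b)(a\cdot c)$ from \cref{Thm: Extension for free monoid} with the operation axioms for $\bar{\rho}$ to verify inductively that the composition of $\bar{\rho}$'s is invariant under the identities defining $\approx$.

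Once well-definedness is in place, the operation axioms for $\tilde{\rho}$ follow by an induction on word length and reduce to the axioms for $\bar{\rho}$. As a sample calculation, for $u, v, w \in Y$ two successive applications of the cycloid-type identity for $\bar{\rho}$ give
\[
\bar{\rho}_{u\cdot(v\cdot w)}\circ\bar{\rho}_u\circ\bar{\rho}_v \;=\; \bar{\rho}_{(v\cdot w)\cdot u}\circ\bar{\rho}_{w\cdot v}\circ\bar{\rho}_{w},
\]
which, combined with $(uv)\cdot w = u\cdot(v\cdot w)$ and $w\cdot uv = ((v\cdot w)\cdot u)(w\cdot v)$ in $S(Y)$, is precisely the instance $\tilde{\rho}_{uv\cdot w}\circ\tilde{\rho}_{uv} = \tilde{\rho}_{w\cdot uv}\circ\tilde{\rho}_{w}$ of the operation axiom for the candidate definition; the general case proceeds by a similar induction on word length.
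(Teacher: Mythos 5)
Your architecture is the same as the paper's: extend each $\rho_u$ to an endomorphism of $S(X)$ via the universal property (\cref{Prop: universal property for S}), extend multiplicatively along the free monoid $M(Y)$, and descend to $S(Y)=M(Y)/{\approx}$. Your first stage is fine (and the uniqueness argument for $\bar{\rho}_{u\cdot v}\circ\bar{\rho}_u=\bar{\rho}_{v\cdot u}\circ\bar{\rho}_v$ on $S(X)$ is a valid, if optional, addition), and the double induction on word length you sketch at the end --- using $ab\cdot c=a\cdot(b\cdot c)$ and $a\cdot bc=((c\cdot a)\cdot b)(a\cdot c)$ from \cref{Thm: Extension for free monoid}, first for single letters by induction on the length of $a$, then for general $b$ --- is exactly the computation the paper carries out. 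Your sample calculation is the paper's induction step.

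The gap is in the step you yourself flag as the principal obstacle, and it is one of logical order. You propose to establish well-definedness on $S(Y)$ \emph{first}, by ``verifying inductively that the composition of $\bar{\rho}$'s is invariant under the identities defining $\approx$.'' But $\approx$ is not presented by a list of rewriting identities one can induct over: $a\approx b$ means $(c\cdot a)\cdot d=(c\cdot b)\cdot d$ for \emph{all} $c,d\in M(Y)$, so there is no induction to run, and as stated this step has no argument behind it. The fix is to reverse the two stages: first prove the operation identity $\rho'_{a\cdot b}\circ\rho'_a=\rho'_{b\cdot a}\circ\rho'_b$ for all $a,b\in M(Y)$ (your double induction, which needs nothing beyond the extension rules and the hypothesis on $\rho$), and then well-definedness becomes a one-line corollary: if $a\approx b$ then in particular $a\cdot b=b\cdot a=1$, whence
\[
\rho'_a=\rho'_{a\cdot b}\circ\rho'_a=\rho'_{b\cdot a}\circ\rho'_b=\rho'_b .
\]
With that reordering your proof coincides with the paper's.
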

\begin{proof}
    By \cref{Prop: universal property for S}, the functor $S$ is the left adjoint of the inclusion of the category of self-similar L-algebras in the category of L-algebras.
    So for every $u\in  Y$
    we have a map $S(\rho_u)\in \End(S(X))$
    that extends $\rho_u$.
    So we have a map $S(\rho):Y\to \End(S(X))$. 
    Consider now $M(Y)$ the free monoid generated by $Y$ (with identity element 1).
    By \cref{Thm: Extension for free monoid}, the operation $\cdot$ of $Y$ extends uniquely to $M(Y)$ such that 
    \begin{enumerate}
        \item $ab\cdot c=a\cdot(b\cdot c),$
        \item $a\cdot bc=((c\cdot a)\cdot b)(a\cdot c)$
        \item $1\cdot a=a$,
    \end{enumerate}
    for all $a,b,c\in M(Y)$.
  Since $M(Y)$ is the free monoid generated by $Y$, we can extend the map $S(\rho)$
to a map $\rho' \colon M(Y) \to \End(S(X))$.
In this way, we have $\rho'_1 = \id$, and we will prove that the second property,
\[
\rho'_{a \cdot b} \circ \rho'_a = \rho'_{b \cdot a} \circ \rho'_b,
\]
holds for all $a, b \in M(Y)$.

First, we show that the identity
\begin{equation} \label{eq:induction-step}
\rho'_{a \cdot y} \circ \rho'_a = \rho'_{y \cdot a} \circ \rho_y
\end{equation}
holds for all $a \in M(Y)$ and $y \in Y$. We proceed by induction on the length of $a$.

For the base case $a = 1$, identity \eqref{eq:induction-step} becomes
\[
\rho'_y \circ \rho'_1 = \rho'_{1} \circ \rho_y,
\]
which holds trivially since $\rho'_1 = \id$.

Assume that \eqref{eq:induction-step} holds for a word $a \in M(Y)$ of length $n \geq 1$. Let $x \in Y$. By the extension property given in \cref{Thm: Extension for free monoid}, we compute:
\begin{align*}
\rho'_{x a \cdot y} \circ \rho'_{x a} 
&= \rho'_{x \cdot (a \cdot y)} \circ \rho_x \circ \rho'_a \\
&= \rho'_{(a \cdot y) \cdot x} \circ \rho'_{a \cdot y} \circ \rho'_a \\
&= \rho'_{(a \cdot y) \cdot x} \circ \rho'_{y \cdot a} \circ \rho_y \\
&= \rho'_{((a \cdot y) \cdot x) \cdot (y \cdot a)} \circ \rho_y \\
&= \rho'_{y \cdot (x a)} \circ \rho_y.
\end{align*}
Hence, \eqref{eq:induction-step} is verified for $x a$ as well, completing the induction.

Now we proceed to prove the second property
\[
\rho'_{a \cdot b} \circ \rho'_a = \rho'_{b \cdot a} \circ \rho'_b
\]
by induction on the length of $b$. When $b$ has length one, this is just \eqref{eq:induction-step}. Suppose the property holds for a given $b$ of length $n \geq 1$. Let $x \in Y$. Then:
\begin{align*}
\rho'_{a \cdot (x b)} \circ \rho'_a 
&= \rho'_{((b \cdot a) \cdot x) \cdot (a \cdot b)} \circ \rho'_a \\
&= \rho'_{(b \cdot a) \cdot x} \circ \rho'_{a \cdot b} \circ \rho'_a \\
&= \rho'_{x \cdot (b \cdot a)} \circ \rho_x \circ \rho'_b \\
&= \rho'_{x b \cdot a} \circ \rho'_{x b},
\end{align*}
completing the induction step. Hence, the second property holds for all $a, b \in ~M(Y)$.

    We now verify that it can also be defined on $S(X)=M(Y)/{\approx}$.
    Let $a,b\in M(Y)$ such that $a\approx b$.
    Then, in particular $a\cdot b=b\cdot a=1$, hence
    \[
    \rho'_a=\rho'_1\rho'_a=\rho'_{a\cdot b}\rho'_a=\rho'_{b\cdot a}\rho'_b=\rho'_1\rho'_b=\rho'_b.
    \]
    Therefore we have a well-defined map $\Tilde{\rho}:S(X)\to \End(S(X))$ that extends
    $\rho$ and such that $\Tilde{\rho}_{a\cdot b}\Tilde{\rho}_a=\Tilde{\rho}_{b\cdot a}\Tilde{\rho}_b$ for all $a,b\in S(X)$.
\end{proof}

We are now ready to prove the main result of this section.

\begin{thm}\label{Thm: Gothere}
    Let $X$ and $Y$ be L-algebras such that $Y$ operates on $X$ via $\rho$.
    Then 
    \[
    S(X\rtimes_\rho Y)=S(X)\rtimes_{\Tilde{\rho}} S(Y).
    \]
\end{thm}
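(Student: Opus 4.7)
The plan is to identify $S(X) \rtimes_{\tilde{\rho}} S(Y)$ as a self-similar closure of $X \rtimes_\rho Y$ and then invoke the universal property of \cref{Prop: universal property for S} to conclude that it is isomorphic to $S(X \rtimes_\rho Y)$. First I would assemble the target: by \cref{action of S(X)}, $\tilde{\rho}$ is an operation of $S(Y)$ on $S(X)$, and by \cref{semidirect_and_selfsimilarity}, $S(X) \rtimes_{\tilde{\rho}} S(Y)$ is self-similar with monoid product $(z, t)(x, u) = (z\,\tilde{\rho}_t(x),\, tu)$. The natural map $\iota : X \rtimes_\rho Y \hookrightarrow S(X) \rtimes_{\tilde{\rho}} S(Y)$, $(x, u) \mapsto (x, u)$, is an L-algebra embedding, since $X \hookrightarrow S(X)$ and $Y \hookrightarrow S(Y)$ are L-subalgebras and $\tilde{\rho}_u(x) = \rho_u(x)$ for $x \in X$, $u \in Y$. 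Moreover, $\iota(X \rtimes_\rho Y)$ generates the codomain as a monoid: every element has the form $(x_1 \cdots x_n, u_1 \cdots u_m)$ with $x_i \in X$, $u_j \in Y$, and telescoping the monoid formula (using $\tilde{\rho}_1 = \id$ and $\tilde{\rho}_t(1) = 1$, which holds since $\tilde{\rho}_t$ is an L-algebra endomorphism) gives
\[
(x_1, 1) \cdots (x_n, 1)(1, u_1) \cdots (1, u_m) = (x_1 \cdots x_n,\, u_1 \cdots u_m).
\]

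Next, applying \cref{Prop: universal property for S} to $\iota$ yields a unique multiplicative L-algebra morphism $\phi : S(X \rtimes_\rho Y) \to S(X) \rtimes_{\tilde{\rho}} S(Y)$ extending $\iota$, and the monoid-generation above forces $\phi$ to be surjective. To promote $\phi$ to an isomorphism, I would build an inverse $\psi$: applying the universal property to the composites $X \hookrightarrow X \rtimes_\rho Y \hookrightarrow S(X \rtimes_\rho Y)$ and $Y \hookrightarrow X \rtimes_\rho Y \hookrightarrow S(X \rtimes_\rho Y)$ yields multiplicative L-algebra morphisms $\alpha : S(X) \to S(X \rtimes_\rho Y)$ and $\beta : S(Y) \to S(X \rtimes_\rho Y)$, and one sets $\psi(z, t) := \alpha(z)\beta(t)$, where the product is taken in the monoid of $S(X \rtimes_\rho Y)$. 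Once $\psi$ is shown to be an L-algebra morphism, the composite $\psi \circ \phi$ is an L-algebra self-map of $S(X \rtimes_\rho Y)$ extending the natural inclusion $X \rtimes_\rho Y \hookrightarrow S(X \rtimes_\rho Y)$, hence equal to the identity by the uniqueness part of \cref{Prop: universal property for S}; combined with surjectivity of $\phi$, this forces $\phi$ to be bijective.

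The main obstacle is verifying that the set-theoretic $\psi$ is an L-algebra morphism, i.e.\ compatible with $(z, t) \cdot (z', t') = (\tilde{\rho}_{t \cdot t'}(z) \cdot \tilde{\rho}_{t' \cdot t}(z'),\, t \cdot t')$. I expect this to reduce to establishing the commutation relation $\beta(u)\,\alpha(x) = \alpha(\tilde{\rho}_u(x))\,\beta(u)$ inside the monoid of $S(X \rtimes_\rho Y)$ for all $x \in X$ and $u \in Y$, which one reads off from the base case $(1,u)(x,1)$ via \cref{X and Y inside semidirect}, and then propagating the identity to all $z \in S(X)$, $t \in S(Y)$ by an inductive word-length argument mirroring the induction in the proof of \cref{action of S(X)}, making crucial use of the free-monoid extension formulas of \cref{Thm: Extension for free monoid}.
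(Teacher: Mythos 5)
Your proposal is correct, and its first half --- self-similarity of $S(X)\rtimes_{\tilde{\rho}}S(Y)$ via \cref{semidirect_and_selfsimilarity}, the embedding of $X\rtimes_\rho Y$, and monoid generation by the telescoping $(x_1,1)\cdots(x_n,1)(1,u_1)\cdots(1,u_m)=(x_1\cdots x_n,\,u_1\cdots u_m)$ --- is exactly the paper's proof, which stops there because these three properties are precisely the defining characterization of the self-similar closure in \cref{Def: self-similar closure}. Your second half (constructing $\phi$ and an inverse $\psi$ from \cref{Prop: universal property for S}) is a sound but redundant re-verification of the uniqueness of that closure, and it is also the only place where your argument is not fully carried out (the commutation relation $\beta(u)\alpha(x)=\alpha(\tilde{\rho}_u(x))\beta(u)$ and the morphism property of $\psi$ are left as a sketch), so you can simply delete it and conclude directly from the definition.
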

\begin{proof}
   By \cref{semidirect_and_selfsimilarity}, we know that 
$S(X)\rtimes_{\widetilde{\rho}} S(Y)$ is a self-similar L-algebra.  
Since the natural maps $X \to S(X)$ and $Y \to S(Y)$ are injective, the induced map  
\[
X \rtimes_{\rho} Y \longrightarrow S(X)\rtimes_{\widetilde{\rho}} S(Y)
\]
is also injective. Moreover, since $X$ generates $S(X)$ as a monoid and $Y$ generates $S(Y)$ as a monoid, for all
\((a,b) \in S(X)\rtimes_{\widetilde{\rho}} S(Y)\) there exist elements 
\(x_1,\dots,x_n \in X\) and \(u_1,\dots,u_m \in Y\) such that
\[
a = x_1 x_2 \cdots x_n 
\quad\text{and}\quad 
b = u_1 u_2 \cdots u_m.
\]

    Thus, by \cref{semidirect_and_selfsimilarity},
    \begin{align*}
        (a,b)&
    =(a,1)(1,b)
    =(x_1x_2\cdots x_n,1)(1,u_1u_2\cdots u_m)\\
    &=(x_1,1)(x_2,1)\cdots(x_n,1)(1,u_1)(1,u_2)\cdots(1,u_m).
    \end{align*}
    Since $(x_i,1),(1,u_j)\in X\rtimes_\rho Y$
    for every $1\leq i\leq n$ and $1\leq j\leq m$,
    we proved that $S(X)\rtimes_{\Tilde{\rho}} S(Y)$ is generated by $X\rtimes_\rho Y$
    as a monoid.
\end{proof}

\begin{cor}
      Let $X$ and $Y$ be KL-algebras such that $Y$ operates on $X$ via $\rho$ as KL-algebras. Then $S(X)\rtimes_{\Tilde{\rho}} S(Y)$ is also a KL-algebra.
\end{cor}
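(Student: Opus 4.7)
The plan is to chain together three results already proved in the paper rather than attempt a direct verification of the KL identity $a\cdot(b\cdot a)=1$ in $S(X)\rtimes_{\widetilde{\rho}}S(Y)$.

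First, I would invoke \cref{Thm: semi-direct product of KL-alg} to conclude that $X\rtimes_{\rho}Y$ is itself a KL-algebra. The hypothesis is that $Y$ operates on $X$ \emph{as KL-algebras}, which by \cref{Def: operate on KL-alg} means exactly that $X$ and $Y$ are KL-algebras and that $x\cdot \rho_u(x)=1$ holds for all $x\in X$ and $u\in Y$; these are precisely the conditions required by \cref{Thm: semi-direct product of KL-alg}.

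Next, I would apply \cref{Prop: Self-silimar for KL-algebras} to $X\rtimes_{\rho}Y$. Since this L-algebra is KL, its self-similar closure $S(X\rtimes_{\rho}Y)$ is a KL-algebra.

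Finally, I would invoke \cref{Thm: Gothere}, which identifies
\[
S(X\rtimes_{\rho}Y)\;=\;S(X)\rtimes_{\widetilde{\rho}}S(Y).
\]
Since the left-hand side is a KL-algebra by the previous step, so is the right-hand side, which is what we wanted to prove. There is no real obstacle here: all the work has been done in \cref{semidirect_and_selfsimilarity}, \cref{action of S(X)}, and \cref{Thm: Gothere}, and the corollary is essentially a formal consequence. If anything, the only thing worth flagging is that the action $\widetilde{\rho}$ used on the right-hand side must be the one produced by \cref{action of S(X)}, and this is indeed the action that arises from \cref{Thm: Gothere}, so the two assembled structures coincide on the nose.
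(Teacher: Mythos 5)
Your proposal is correct and follows exactly the same route as the paper: the paper's one-line proof also chains \cref{Thm: semi-direct product of KL-alg} (implicitly), \cref{Prop: Self-silimar for KL-algebras}, and the identification $S(X\rtimes_\rho Y)=S(X)\rtimes_{\widetilde{\rho}}S(Y)$ from \cref{Thm: Gothere}. You have merely made the implicit first step explicit, which is a fair improvement in readability.
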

\begin{proof}
    By \cref{Prop: Self-silimar for KL-algebras}, $S(X\rtimes_\rho Y)=S(X)\rtimes_{\Tilde{\rho}} S(Y)$ is a KL-algebra.
\end{proof}

\section{Ideals of Semidirect Products of L-Algebras}\label{Section: Ideals of Semidirect Products of L-Algebras}
This section is dedicated to the study of ideals of a semidirect product.
Hence, we will consider two L-algebras $X$ and $Y$
such that $Y$ acts on $X$ via 
$\rho : Y \rightarrow \End(X)$.

\begin{defn}
    Let $K$ be an ideal of $X\rtimes_\rho Y$.
    We define 
    \begin{align*}
    K_Y&=\{y\in Y\mid (1,y)\in K\};\\ 
    K_X&=\{x\in X\mid (x,1)\in K\}.
    \end{align*}

    Similarly, if $X$ and $Y$ are CKL-algebras 
    such that $Y$ operates on $X$ via $\rho$ 
    and $L$ is an ideal of $X\sym_\rho Y$,
    we define
    \begin{align*}
    L_Y&=\{y\in Y\mid (1,y)\in L\};\\ 
    L_X&=\{x\in X\mid (x,1)\in L\}.
    \end{align*}
\end{defn}

\begin{lem}\label{Lemma:The subideals in semidirect product}   
    Let $K\subset X\rtimes_\rho Y$ be an ideal.
    Then $K_Y$ is an ideal of $Y$ and $K_X$
     is an ideal of $X$.
    
   Moreover, if $X$ and $Y$ are CKL-algebras such that $Y$ operates on $X$ via $\rho$ and $L\subset X\sym_\rho Y$ is an ideal, then $L_Y$ is an ideal of $Y$ and 
    $L_X$ is an ideal of $X$.
\end{lem}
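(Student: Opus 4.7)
The plan is to realize $K_Y$ and $K_X$ as the traces of $K$ on two natural copies of $Y$ and $X$ sitting inside $X\rtimes_\rho Y$, and then translate each ideal axiom across. The key preliminary observation, using $\rho_1=\id$ and $\rho_w(1)=1$ for every $w\in Y$, is that
\[
(1,u)\cdot(1,v)=(1,u\cdot v) \qquad\text{and}\qquad (x,1)\cdot(y,1)=(x\cdot y,1).
\]
Hence $\{1\}\times Y$ and $X\times\{1\}$ are L-subalgebras of $X\rtimes_\rho Y$ canonically isomorphic to $Y$ and $X$, and under these identifications $K_Y$ corresponds to $K\cap(\{1\}\times Y)$ and $K_X$ to $K\cap(X\times\{1\})$.

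For $K_Y$ I would verify (I1)--(I4) in turn. For (I1), if $y,y\cdot v\in K_Y$ then both $(1,y)$ and $(1,y)\cdot(1,v)=(1,y\cdot v)$ lie in $K$, so (I1) for $K$ forces $(1,v)\in K$, i.e.\ $v\in K_Y$. Axioms (I2)--(I4) follow from the analogous computations
\begin{align*}
(1,u)\cdot(1,y)&=(1,u\cdot y),\\
\bigl((1,y)\cdot(1,v)\bigr)\cdot(1,v)&=(1,(y\cdot v)\cdot v),\\
(1,v)\cdot\bigl((1,y)\cdot(1,v)\bigr)&=(1,v\cdot(y\cdot v)),
\end{align*}
combined with the corresponding axiom applied to $K$. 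The verification for $K_X$ is entirely analogous, using the second of the two displayed product formulas above.

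For the symmetric case, the same identifications go through: $\rho_u(1)=1$ and $\rho_1=\id$ place both $\{1\}\times Y$ and $X\times\{1\}$ inside $X\sym_\rho Y$, and they inherit the same operation. Therefore the verification of (I1) for $L_Y$ and $L_X$ is literally the computation carried out for $K_Y$ and $K_X$, while $(1,1)\in L$ gives $1\in L_Y\cap L_X$; by \cref{rem: ideals of KL and CKL-alg} this suffices in the CKL setting. There is no real obstacle in this proof; the only point requiring care is checking that the two embeddings preserve the L-algebra operation, which hinges on $\rho$ fixing $1\in X$ and on $\rho_1=\id$. After that observation the whole argument is mechanical.
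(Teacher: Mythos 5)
Your argument is correct, but it is more hands-on than the paper's. Both proofs rest on the same observation: the maps $x\mapsto(x,1)$ and $y\mapsto(1,y)$ are L-algebra morphisms into $X\rtimes_\rho Y$ (which is exactly your computation $(1,u)\cdot(1,v)=(1,u\cdot v)$ and $(x,1)\cdot(y,1)=(x\cdot y,1)$, using $\rho_1=\id$ and $\rho_w(1)=1$). Where you then verify (I1)--(I4) for $K_Y$ and $K_X$ one axiom at a time, the paper composes these embeddings with the quotient map $X\rtimes_\rho Y\to(X\rtimes_\rho Y)/K$ and observes that $K_X$ and $K_Y$ are precisely the kernels of the resulting morphisms $f:X\to(X\rtimes_\rho Y)/K$ and $g:Y\to(X\rtimes_\rho Y)/K$; since kernels of L-algebra morphisms are ideals, the claim follows in one line. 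The paper's route buys brevity at the cost of invoking the quotient/kernel machinery; your route is self-contained and makes transparent exactly which instance of each axiom for $K$ is being used (e.g.\ that (I2)--(I4) for $K_Y$ only require applying the corresponding axiom for $K$ to elements of the form $(1,v)$). Your treatment of the symmetric case is also fine: the key point, which you state, is that $(x,1)$ and $(1,u)$ always lie in $X\sym_\rho Y$ because $\rho_1=\id$ and $\rho_u(1)=1$, so the same computations go through, and appealing to \cref{rem: ideals of KL and CKL-alg} to reduce to $1\in I$ plus (I1) in the CKL setting is legitimate (though, as your own computations show, all four axioms could equally be checked directly there). No gaps.
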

\begin{proof}
Consider the following maps:
\[
f: X\to (X\rtimes_\rho Y)/K; ~ x\mapsto [(x,1)]_K \qquad g: Y\to (X\rtimes_\rho Y)/K; ~ y\mapsto [(1,y)]_K.
\]
It is easy to show that they are both L-algebra morphisms.
Moreover, $\ker f=K_X$ and $\ker g=K_Y$.
Therefore, $K_X$ is an ideal of $X$ and $K_Y$ is an ideal of $Y$.

Moreover, since $(x,1)$ and $(1,u)$ are elements of $X\sym_\rho Y$ for every $x\in X$ and $u\in Y$, we can apply the same strategy to prove that $L_X$ is an ideal of $X$ and $L_Y$ is an ideal of $Y$.
\end{proof}

\begin{lem}\label{Lemma:The ideal split of semidirect product}
Let $K\subset X\rtimes_\rho Y$ be an ideal.
Then $(x,u)\in K$ if and only if
$x\in K_X$ and $u\in K_Y$.

Moreover, if $X$ and $Y$ are CKL-algebras such that $Y$ operates on $X$ via $\rho$ and $L\subset X\sym_\rho Y$ is an ideal, then $(x,u)\in L$ 
if and only if $x\in L_X$ and $u\in L_Y$.
\end{lem}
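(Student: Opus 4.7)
The plan is to prove both implications by direct computation, relying only on the semidirect-product formula
\[
(a,s)\cdot(b,t)=\bigl(\rho_{s\cdot t}(a)\cdot\rho_{t\cdot s}(b),\,s\cdot t\bigr)
\]
together with the ideal axioms (I1)--(I3). The key observation is the collapse $u\cdot u=1$, which, combined with $\rho_1=\id$ and the L-algebra identities $1\cdot u=u$ and $u\cdot 1=1$, produces the three elementary equalities
\[
(1,u)\cdot(x,u)=(x,1),\qquad (x,u)\cdot(1,u)=(1,1),\qquad (1,1)\cdot(1,u)=(1,u),
\]
which are the whole engine of the argument.

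For the forward direction, assuming $(x,u)\in K$, I would first invoke (I2) with the element $(1,u)$ to obtain $(1,u)\cdot(x,u)=(x,1)\in K$, giving $x\in K_X$. Next, (I3) applied with $y=(1,u)$ yields
\[
\bigl((x,u)\cdot(1,u)\bigr)\cdot(1,u)=(1,1)\cdot(1,u)=(1,u)\in K,
\]
so $u\in K_Y$. For the converse, starting from $(x,1),(1,u)\in K$, I would apply (I1) to the pair $(1,u)\in K$ and $(1,u)\cdot(x,u)=(x,1)\in K$ to conclude that $(x,u)\in K$.

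The statement for the symmetric semidirect product of CKL-algebras follows by exactly the same argument, provided one checks that the probe elements $(1,u)$, $(x,1)$, $(1,1)$ all lie in $X\sym_\rho Y$. This is immediate: $\rho_u(1)=1$ because each $\rho_u$ is an L-algebra morphism, and $\rho_1(x)=x$ because $\rho_1=\id$, so the three computations above take place inside $X\sym_\rho Y$ and the ideal axioms can be applied there verbatim.

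There is no serious obstacle here; the only insight needed is that $(1,u)$ serves simultaneously as the left-multiplier in (I2) — stripping off the $Y$-component to produce $(x,1)$ — and as the parameter $y$ in (I3) — regenerating $(1,u)$ through the identity $(1,1)\cdot(1,u)=(1,u)$. Everything else reduces to plugging into the semidirect-product formula and using $u\cdot u=1$ to collapse the $\rho$-factors.
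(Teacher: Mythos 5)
Your proof is correct and follows essentially the same route as the paper: both arguments hinge on the identities $(1,u)\cdot(x,u)=(x,1)$ and $(x,u)\cdot(1,u)=(1,1)$ and use $(1,u)$ as the probe element in each direction. The only (immaterial) difference is that you extract $(1,u)\in K$ via axiom (I3), whereas the paper gets it from (I1) applied to $(x,u)\in K$ and $(x,u)\cdot(1,u)=(1,1)\in K$; your explicit check that the probe elements lie in $X\sym_\rho Y$ is a nice touch the paper leaves implicit.
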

\begin{proof}
    Suppose that $(x,u)\in K$.
    Then, 
    by \cref{X and Y inside semidirect}(\ref{part4}),
    $(1,u)\cdot(x,u)=(x,1)$, so $(x,1)\in K$.
    Moreover, $(x,u)\cdot(1,u)=(1,1)\in K$,
    hence $(1,u)$ is also in $K$.
    Suppose that $(x,1)$ and $(1,u)$ are in $K$.
    Then, 
    by \cref{X and Y inside semidirect}(\ref{part4}), 
     $(1,u)\cdot(x,u)=(x,1)\in K$, so $(x,u)\in K$.

    The same proof also works for the CKL-algebras case.
\end{proof}

\begin{cor}
\label{ideals of semidirect products are semidirect products}
     Let $K\subseteq X\rtimes_\rho Y$ be an ideal. We have that $$K=K_X\rtimes_{\rho|_{K_Y}} K_Y.$$
     
     If $X$ and $Y$ are CKL-algebra such that $Y$ operates on $X$ via $\rho$ and $L\subset X\sym_\rho Y$ is an ideal, then$$L=L_X\sym_{\rho|_{L_Y}} L_Y.$$
\end{cor}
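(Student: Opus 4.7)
The plan is to deduce the identity by combining the previous two lemmas with a direct check that $\rho$ restricts to an operation of $K_Y$ on $K_X$. By \cref{Lemma:The ideal split of semidirect product}, the underlying set of $K$ is already $\{(x,u)\mid x\in K_X,\ u\in K_Y\}$, which coincides with the underlying set of $K_X\rtimes_{\rho|_{K_Y}} K_Y$; the analogous statement holds for $L$ in the symmetric case. So the work lies entirely in showing that the purported restricted operation $\rho|_{K_Y}$ (respectively $\rho|_{L_Y}$) makes sense.

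The key step is to verify that $\rho_u(K_X)\subseteq K_X$ for every $u\in K_Y$. I would argue as follows: pick $u\in K_Y$ and $x\in K_X$, so that $(1,u),(x,1)\in K$; by axiom (I2) applied to the ideal $K$ with the element $(x,1)\in K$ and $(1,u)\in X\rtimes_{\rho}Y$, we have $(1,u)\cdot(x,1)\in K$. A direct computation using the defining formula of the semidirect product gives
\[
(1,u)\cdot(x,1)=\bigl(\rho_{u\cdot 1}(1)\cdot \rho_{1\cdot u}(x),\,u\cdot 1\bigr)=(\rho_u(x),1),
\]
so by \cref{Lemma:The ideal split of semidirect product} we conclude $\rho_u(x)\in K_X$. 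The remaining conditions from the definition of an operation, namely $\rho_1=\id$ and $\rho_{u\cdot v}\circ\rho_u=\rho_{v\cdot u}\circ\rho_v$, transfer to the restriction $\rho|_{K_Y}\colon K_Y\to\End(K_X)$ for free. With this in place, the inherited product formula on $K$ matches that of $K_X\rtimes_{\rho|_{K_Y}} K_Y$ term by term, giving the equality.

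For the CKL/symmetric case, the same calculation shows $\rho|_{L_Y}$ sends $L_X$ into $L_X$, and the extra commutativity and linearity requirements from \cref{Def: operate on CKL-alg} are clearly preserved by restriction. Moreover, every element $(x,u)\in L\subseteq X\sym_\rho Y$ satisfies the symmetry condition $\rho_u(x)=x$ by definition of the ambient symmetric semidirect product, so no further compatibility needs to be checked: $L=L_X\sym_{\rho|_{L_Y}} L_Y$ follows at once.

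The only real obstacle is the stability $\rho_u(K_X)\subseteq K_X$ for $u\in K_Y$ (and likewise for $L$), which is precisely what the one-line computation above settles; everything else is bookkeeping, either formulaic or inherited directly from the ambient semidirect product.
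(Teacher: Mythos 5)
Your proposal is correct and follows essentially the same route as the paper: the set-level equality comes from \cref{Lemma:The ideal split of semidirect product}, and the only substantive point is the stability $\rho_u(K_X)\subseteq K_X$ for $u\in K_Y$, which you establish by the same computation $(1,u)\cdot(x,1)=(\rho_u(x),1)$ used in the paper. The symmetric case is handled identically in both.
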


\begin{proof}
We need to show that $\rho_u\in \End(K_X)$ for every $u\in K_Y$.
So let $u\in K_Y$ and $x\in K_X$, then
$\big(\rho_u(x),1\big)=(1,u)\cdot (x,1)$
and, since $(1,u),(x,1)\in K$ we obtain that $\big(\rho_u(x),1\big)\in K$, i.e. $\rho_u(x)\in K_X$.
Therefore $\rho_u\in \End(K_X)$, so 
$K_Y$ operates on $K_X$ via the restriction of $\rho$.

Moreover, by \cref{Lemma:The ideal split of semidirect product}, $K$ and $K_X\rtimes_{\rho|_{K_Y}} K_Y$ coincide as subsets of $X\rtimes_\rho Y$.

The same proof also works for the CKL-algebras case.
\end{proof}

Note that the converse of \cref{{ideals of semidirect products are semidirect products}} is not true, as shown by the following.

\begin{exa}
\label{examlple semidir}
    Let $X=\{1,x,y\}$ as in \cref{Exa: the Self-similar of CKL-algebra is not CKL-algebra}, i.e. $x\cdot y=y\cdot x=x $ 
    and let $Y=\{1,u\}$ be the L-algebra (unique of size 2 up to isomorphism) 
    with logical unit 1.
    Then
    $\End(X)=\{id, \sigma\}$, where $\sigma$ is the map that sends every element to 1.
    Moreover $X$ operates on $Y$ via $\rho:X\to \End(Y)$, where $\rho_1=id$ and $\rho_2=\sigma$.

    $X$ has two ideals:
    \[
    I_1=\{1\} \text{ and } I_2=X.
    \]
    $Y$ has only two ideals:
    \[
    J_1=\{1\} \text{ and } J_2=Y.
    \]
    But he semidirect product $X\rtimes_\rho Y$ has 3 ideals:
    \begin{align*}
         K_1&=\{(1,1)\}=I_1\rtimes_\rho J_1;\\
         K_2&=\{(x,1),(y,1),(1,1)\}=I_2\rtimes_\rho J_1;\\
         K_3&=X\rtimes_\rho Y=I_2\rtimes_\rho J_2.
    \end{align*}
    So, $I_1\rtimes_\rho J_2$ is not an ideal.
    For example
    \[
    (1,u)\cdot(x,1)=(\rho_{u\cdot 1}(1)\cdot\rho_{1\cdot u}(x),u\cdot 1)
    =(1\cdot \rho_u(x), 1)=(1,1)\in I_1\rtimes_\rho J_2
    \]
    but $(x,1)\notin \{1\}\rtimes_\rho J_2=I_1\rtimes_\rho J_2$.
\end{exa}

Now we know that ideals of the semidirect products
are semidirect products of ideals in the respective components.
However, as shown in \cref{examlple semidir}, there exist ideals in the individual components that cannot be used to form an ideal of the semidirect product.

The following proposition provides an equivalent characterization for when such pairs of ideals give rise to an ideal of the semidirect product.

\begin{pro}\label{Lemma:The equivalence of the ideal in semidirect product}  Let $X$ and $Y$ be L-algebras such that $Y$ operates on $X$ via $\rho$. Let $I$ be an ideal of $X$, $U$ be an ideal of $Y$. Then $I\rtimes_{\rho|_{U}} U$ is an ideal of $X\rtimes_{\rho} Y$ if and only if,   for each $u\in U$, $x\in I$, $v\in Y$, and $y\in X$ \begin{enumerate}[font=\upshape]    
\item[(I'1)] $\rho_v(I)\subseteq I$;    
\item[(I'2)] $(x\cdot\rho_u(y))\cdot y,~y\cdot 
 (x\cdot\rho_u( y))\in I$.
\end{enumerate}\end{pro}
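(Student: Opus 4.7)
The plan is to handle necessity and sufficiency separately. For the ``only if'' direction, I probe the ideal $K := I \rtimes_{\rho|_{U}} U$ with carefully chosen elements to extract (I'1) and (I'2) from the four ideal axioms. For the ``if'' direction, I take a structural approach: using (I'1) and (I'2), I show that $\rho$ induces a well-defined operation $\widetilde{\rho}$ of $Y/U$ on $X/I$, and identify $K$ with the kernel of the resulting quotient map.

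For the necessity, I apply (I2) to $(x,1) \in K$ (with $x \in I$) and $(1,v) \in X \rtimes_{\rho} Y$ (with $v \in Y$ arbitrary). Since $v \cdot 1 = 1$, the product reduces to $(1,v) \cdot (x,1) = (\rho_v(x), 1)$, whose membership in $K$ yields (I'1). Applying (I3) and (I4) to $(x,u) \in K$ (with $x \in I$, $u \in U$) and $(y,1) \in X\rtimes_{\rho} Y$, and expanding $(x,u) \cdot (y,1) = (x \cdot \rho_u(y), 1)$, I obtain
\[
\bigl((x,u)\cdot(y,1)\bigr) \cdot (y,1) = \bigl((x\cdot\rho_u(y))\cdot y,\, 1\bigr),\qquad (y,1)\cdot\bigl((x,u)\cdot(y,1)\bigr) = \bigl(y\cdot(x\cdot\rho_u(y)),\, 1\bigr),
\]
both of which must lie in $K$; this is precisely (I'2).

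For the sufficiency, assume (I'1) and (I'2). Restricting (I'1) to $v \in U \subseteq Y$ gives $\rho_u(I) \subseteq I$ for $u \in U$, so $K = I \rtimes_{\rho|_{U}} U$ is a well-defined semidirect subproduct of $X \rtimes_{\rho} Y$. By (I'1) each $\rho_v$ descends to an endomorphism $\overline{\rho}_v$ of $X/I$. Applying (I'2) with $x = 1 \in I$ shows that, for every $u \in U$ and $y \in X$, both $\rho_u(y) \cdot y$ and $y \cdot \rho_u(y)$ lie in $I$; hence $\rho_u(y) \equiv y \pmod{I}$. Combining this congruence with the operation axiom $\rho_{u\cdot u'}\circ\rho_u = \rho_{u'\cdot u}\circ\rho_{u'}$, I deduce that $\rho_u(y) \equiv \rho_{u'}(y) \pmod{I}$ whenever $u \equiv u' \pmod{U}$. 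Thus $\overline{\rho}$ factors through $Y/U$ as an operation $\widetilde{\rho}\colon Y/U \to \End(X/I)$, and the natural map
\[
\pi\colon X \rtimes_{\rho} Y \longrightarrow (X/I) \rtimes_{\widetilde{\rho}} (Y/U),\qquad (x,s) \longmapsto ([x],[s]),
\]
is an L-algebra homomorphism. Its kernel (the preimage of $(1,1)$) is exactly $K$, and since kernels of L-algebra morphisms are ideals, $K$ is an ideal of $X \rtimes_{\rho} Y$.

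The main obstacle is verifying that $\overline{\rho}$ descends to an operation on $Y/U$. This is the step that genuinely requires (I'2) (rather than the mere $\rho$-invariance of $I$ given by (I'1)), and it rests on the interplay between (I'2) and the operation axiom. A more computational alternative, which verifies (I1)--(I4) for $K$ directly, is also possible but requires a lengthy expansion of $((a,s)\cdot(b,t))\cdot(b,t)$ and $(b,t)\cdot((a,s)\cdot(b,t))$ together with repeated use of the operation axiom to bring the first coordinate into the shape to which (I'2) applies.
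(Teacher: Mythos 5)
Your proof is correct, and its sufficiency half takes a genuinely different route from the paper's. For necessity you do essentially what the paper does, but more economically: by specializing the second coordinate to $1$ from the outset you read (I'1) directly off axiom (I2) and (I'2) off axioms (I3)--(I4), whereas the paper works with a general pair $(y,v)$, names the resulting first coordinates $A$ and $B$, and then specializes. For sufficiency the paper verifies (I1)--(I4) for $I\rtimes_{\rho|_U}U$ by hand, which requires a cascade of auxiliary elements ($C$ through $H$) and repeated use of the cycloid identity to reduce $A$ and $B$ to expressions covered by (I'1)--(I'2). You instead note that (I'1) lets every $\rho_v$ descend to an endomorphism of $X/I$ (since $\rho_v$ is multiplicative and preserves the congruence modulo $I$), that (I'2) with $x=1$ forces $\overline{\rho}_u=\id_{X/I}$ for all $u\in U$, and that the operation axiom then collapses $\overline{\rho}$ to a well-defined operation $\widetilde{\rho}$ of $Y/U$ on $X/I$; the set $I\times U$ is the kernel of the induced morphism onto $(X/I)\rtimes_{\widetilde{\rho}}(Y/U)$, hence an ideal by the same standard fact the paper invokes in \cref{Lemma:The subideals in semidirect product}. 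This is essentially the content of \cref{Theorem:The equivalence of the ideal in semidirect product}, which the paper derives \emph{from} the present proposition; you prove that statement directly and obtain the proposition as a corollary, bypassing the computational core entirely. A small bonus of your route is that it exhibits (I'2) as needed only in the case $x=1$ for the ``if'' direction, the general case then following automatically from necessity once the ideal property is in hand; the cost is that it leans on the quotient-by-an-ideal and kernel machinery rather than staying elementary.
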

\begin{proof}
    We fix that $u\in U$, $x\in I$, $v\in Y$, and $y\in X$. And, we denote a condition (I'3) for $\rho$ as: $\rho_u^{-1}(I)\subseteq I$ for $u\in U$.

    If $I\rtimes_{\rho|_{U}} U$ satisfies (I1),   $(1,v)\cdot (x,1)\in I\rtimes_{\rho|_{U}} U$. Then  we have $\rho_v(x)\in I$, that is (I'1). If $\rho$ satisfies (I'1), we have $(y,v)\cdot (x,u)=(\rho_{v\cdot u}(y)\cdot \rho_{u\cdot v}(x),v\cdot u)\in I\rtimes_{\rho|_{U}} U$. Thus, (I1) for $I\rtimes_{\rho|_{U}} U$ is equivalent to (I'1) for $\rho$.

    Assume that condition (I'1) holds for $\rho$. Let $$(x,u)\cdot (y,v)= (\rho_{ u\cdot v}(x)\cdot \rho_{ v\cdot u}(y), u\cdot v)\in I\rtimes_{\rho|_{U}}U.$$ Since $I$ is an ideal, we have $\rho_{ v\cdot u}(y)\in I$. Take $v$ to be $1$,   $\rho_{   u}(y)\in I$. Thus, (I2) for $I\rtimes_{\rho|_{U}} U$ is equivalent to the condition  (I'3). 

For $u\in U$, $x\in I$, $v\in Y$, and $y\in X$, we have
    \begin{align*}
        \left(\left(x,u\right)\cdot \left(y,v\right)\right)\cdot(y,v)&=(\rho_{ u\cdot v}(x)\cdot \rho_{ v\cdot u}(y), u\cdot v)\cdot (y,v)\\
        &=\Big(\underbrace{\left(\rho_{(u\cdot v)\cdot v}\rho_{u\cdot v} (x)\cdot \rho_{(u\cdot v)\cdot v}\rho_{v\cdot u}(y)\right)\cdot \rho_{v\cdot (u\cdot v)}(y)}_{A},(u\cdot v)\cdot v\Big).
    \end{align*}
       We denote the first component of $\left(\left(x,u\right)\cdot \left(y,v\right)\right)\cdot(y,v)$  as $A$. Notice that, since $(u\cdot v)\cdot v\in U$, $\left(\left(x,u\right)\cdot \left(y,v\right)\right)\cdot(y,v)\in I\rtimes_{\rho|_{U}} U$ if and only if $A\in I$. Now, we suppose $A\in I$ and choose $x=\rho_u(x')$, $v=1$, $y=\rho_u(y')$ with $x'\in I$, $y'\in Y$. Then $A= \rho_u((x'\cdot \rho_u(y'))\cdot y') \in I$. By (I'3), we have $(x'\cdot \rho_u(y'))\cdot y'\in I$.  Meanwhile, we have 
       \begin{align*}
        (y,v)\cdot\left(\left(x,u\right)\cdot \left(y,v\right)\right)&=(y,v)\cdot(\rho_{ u\cdot v}(x)\cdot \rho_{ v\cdot u}(y), u\cdot v) \\
        &=\Big(\underbrace{\rho_{ v\cdot(u\cdot v}(y)\cdot \left(\rho_{(u\cdot v)\cdot v}\rho_{u\cdot v} (x)\cdot \rho_{(u\cdot v)\cdot v}\rho_{v\cdot u}(y)\right)}_{B},v\cdot(u\cdot v) \Big).
    \end{align*}
  Similarly, $y'\cdot (x'\cdot \rho_u(y'))\in I$. Thus, (I'2) holds for $\rho$, if $I\rtimes_{\rho|_{U}} U$ is an ideal of $X\rtimes_\rho Y$. Now, if $I\rtimes_{\rho|_{U}} U$ is an ideal of $X\rtimes_\rho Y$, the three conditions hold. 

  Let now assume that (I'1),  and (I'2) hold for $\rho$ and for every $x\in I$ and $u\in U$. Firstly, we show that $\rho_{v\cdot u}(y)\cdot \rho_{v\cdot( u\cdot v)}(y)$ and $\rho_{v\cdot( u\cdot v)}(y)\cdot \rho_{v\cdot u}(y)\in I$. Denote the following expressions, respectively, by C, D, E, and F:
\begin{align*}
C &= \rho_{v\cdot (u\cdot v)}(y)\cdot\left(x\cdot \rho_{((u\cdot v)\cdot v)\cdot (v\cdot u)}\rho_{v\cdot (u\cdot v)}(y)\right), \\
D &= \left( x \cdot \rho_{((u \cdot v) \cdot v) \cdot (v \cdot u)} \rho_{v \cdot (u \cdot v)}(y) \right) \cdot \rho_{v \cdot (u \cdot v)}(y), \\
E &= \rho_{v \cdot u}(y) \cdot \left( x \cdot \rho_{((u \cdot v) \cdot v) \cdot (v \cdot u)} \rho_{v \cdot u}(y) \right), \\
F &= \left( x \cdot \rho_{((u \cdot v) \cdot v) \cdot (v \cdot u)} \rho_{v \cdot u}(y) \right) \cdot \rho_{v \cdot u}(y).
\end{align*} By (I'1) and (I'2), we have that $C,D, E, F\in I$. Thus, by 
   $$C\cdot (\rho_{v\cdot u}(y)\cdot \rho_{v\cdot( u\cdot v)}(y))=D\cdot F$$ and $$E(x)\cdot (\rho_{v\cdot u}(y)\cdot \rho_{v\cdot( u\cdot v)}(y))=F\cdot D,$$ we have
    $\rho_{v\cdot u}\cdot \rho_{v\cdot( u\cdot v)}(y)$ and $\rho_{v\cdot( u\cdot v)}(y)\cdot \rho_{v\cdot u}(y)\in I$.

  Denote $\left(\rho_{(u\cdot v)\cdot v}\rho_{u\cdot v} (x)\cdot \rho_{(u\cdot v)\cdot v}\rho_{v\cdot u}(y)\right)\cdot \rho_{v\cdot u}(y)$ by $G$. Notice that 
  $$G\cdot A=\left(\rho_{v\cdot u}\cdot (\rho_{(u\cdot v)\cdot v}\rho_{u\cdot v}(x)\cdot \rho_{(u\cdot v)\cdot v}\rho_{v\cdot u}(y))\right)\cdot \left(\rho_{v\cdot u}(y)\cdot \rho_{v\cdot( u\cdot v)}(y)\right), $$ 
 and  $\rho_{v\cdot u}(y)\cdot \rho_{v\cdot( u\cdot v)}(y),G\in I$. Thus, $A\in I$.

Denote $\rho_{v\cdot u}(y)\cdot \left(\rho_{(u\cdot v)\cdot v}\rho_{u\cdot v} (x)\cdot \rho_{(u\cdot v)\cdot v}\rho_{v\cdot u}(y)\right)$ by $H$. Similarly, 
\begin{align*}
    H\cdot B=&\left( \left(\rho_{(u\cdot v)\cdot v}\rho_{u\cdot v} (x)\cdot \rho_{(u\cdot v)\cdot v}\rho_{v\cdot u}(y)\right)\cdot\rho_{v\cdot u}(y)\right)\\
    &\cdot \left(  \left( \rho_{(u\cdot v)\cdot v}\rho_{v\cdot u}(y)\cdot 
  \rho_{(u\cdot v)\cdot v}\rho_{u\cdot v} (x)\right)\cdot\left( \rho_{(u\cdot v)\cdot v}(\rho_{v\cdot( u\cdot v)}(y)\cdot \rho_{v\cdot u}(y))
      \right)\right). 
\end{align*}
Since $ \rho_{v\cdot( u\cdot v)}(y)\cdot \rho_{v\cdot u}(y)$ and $G\in I$, we can show that $B\in I$.

Take $x$ to be $1$. From (I'2), we have $ \rho_u(y)\cdot y\in I$ for all $u\in U$. Thus, if $\rho_u(y)\in I$, then we obtain $y\in I$. Therefore, (I'2) implies (I'3) for $\rho$. and also implies (I2) for $I\rtimes_{\rho|_{U}} U$.
\end{proof}

As a consequence, we obtain an explicit upper bound for the number of ideals of the semidirect product of two L-algebras.

\begin{cor}
   Let $X$ and $Y$ be L-algebras such that $Y$ operates on $X$ via $\rho$.  Let $U$ be an ideal of $Y$.
     Then $\{1\}\rtimes U$ is an ideal of $X\rtimes_\rho Y$ if and only if $\rho_u=\id_X$ for all $u\in U$. In particular, 
     \[
     |\mathscr{I}(X\rtimes_\rho Y)|\leq|\mathscr{I}(X)||\mathscr{I}(Y)|,
     \]
     and the equality holds if and only if $X\rtimes_\rho Y=X\times Y$.
\end{cor}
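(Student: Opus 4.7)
The plan is to derive all three assertions as direct specializations of \cref{Lemma:The equivalence of the ideal in semidirect product} together with \cref{ideals of semidirect products are semidirect products}.

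First, for the equivalence that $\{1\}\rtimes U$ is an ideal if and only if $\rho_u=\id_X$ on $U$, I would apply \cref{Lemma:The equivalence of the ideal in semidirect product} with $I=\{1\}$. Condition (I'1), which reads $\rho_v(\{1\})\subseteq\{1\}$, holds automatically because every L-algebra endomorphism fixes the logical unit. Condition (I'2), specialized to $x=1$ and arbitrary $y\in X$, $u\in U$, simplifies to
\[
\rho_u(y)\cdot y=1\quad\text{and}\quad y\cdot\rho_u(y)=1.
\]
By the defining axiom of an L-algebra (namely that $a\cdot b=b\cdot a=1$ forces $a=b$), this is equivalent to $\rho_u(y)=y$ for all $y\in X$, i.e.\ $\rho_u=\id_X$. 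Conversely, if $\rho_u=\id_X$ for every $u\in U$, both (I'1) and (I'2) are trivially satisfied, so $\{1\}\rtimes U$ is an ideal by \cref{Lemma:The equivalence of the ideal in semidirect product}.

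Next, for the inequality $|\mathscr{I}(X\rtimes_\rho Y)|\leq|\mathscr{I}(X)|\cdot|\mathscr{I}(Y)|$, I would use \cref{ideals of semidirect products are semidirect products}: every ideal $K$ of $X\rtimes_\rho Y$ is of the form $K=K_X\rtimes_{\rho|_{K_Y}}K_Y$, and the components $K_X\in\mathscr{I}(X)$, $K_Y\in\mathscr{I}(Y)$ are determined by \cref{Lemma:The subideals in semidirect product}. The map $K\mapsto(K_X,K_Y)$ is therefore an injection $\mathscr{I}(X\rtimes_\rho Y)\hookrightarrow\mathscr{I}(X)\times\mathscr{I}(Y)$, which gives the bound.

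Finally, for the characterization of equality: the implication that equality holds when $X\rtimes_\rho Y=X\times Y$ is immediate, since in the direct product every pair $(I,J)\in\mathscr{I}(X)\times\mathscr{I}(Y)$ yields the ideal $I\times J$, so the injection above becomes a bijection. For the converse, assume equality holds. Then every pair of ideals of $X$ and $Y$ must correspond to an ideal of $X\rtimes_\rho Y$; in particular the pair $(\{1\},Y)$ does, so $\{1\}\rtimes Y$ is an ideal. By the first part of the corollary this forces $\rho_u=\id_X$ for all $u\in Y$, and then the semidirect product operation
\[
(x,u)\cdot(y,v)=\bigl(\rho_{u\cdot v}(x)\cdot\rho_{v\cdot u}(y),\,u\cdot v\bigr)
\]
collapses to the direct product operation $(x\cdot y,\,u\cdot v)$, giving $X\rtimes_\rho Y=X\times Y$.

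The whole argument is essentially a clean specialization; the only minor care needed is in correctly reducing (I'2) at $x=1$ and invoking the L-algebra antisymmetry axiom, after which each claim drops out. I do not expect a serious obstacle.
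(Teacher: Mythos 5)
Your argument is correct and follows essentially the same route as the paper: specializing conditions (I'1) and (I'2) of \cref{Lemma:The equivalence of the ideal in semidirect product} to $I=\{1\}$, reducing (I'2) to $\rho_u(y)\cdot y=y\cdot\rho_u(y)=1$ and invoking the antisymmetry axiom, with the cardinality bound coming from the decomposition $K=K_X\rtimes_{\rho|_{K_Y}}K_Y$. You merely spell out the counting and equality steps more explicitly than the paper does.
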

\begin{proof} 
    If $\rho_u=\id_X$ for all $u \in U$, then it is easy to check that the conditions (I'1) and (I'2)
    of \cref{Lemma:The equivalence of the ideal in semidirect product} are satisfied.

    Vice versa, if $\{1\}\rtimes U$ is an ideal of $I\rtimes_{\rho|_{U}} U$, then,
    by condition (I'2) of 
    \cref{Lemma:The equivalence of the ideal in semidirect product},
    we get that $(1\cdot\rho_u(y))\cdot y,  y\cdot (1\cdot\rho_u( y))\in \{1\}$
    for all $u\in U$ and $y\in X$.
    Therefore, for all $u\in U$ and $y\in X$
    \[
    \rho_u(y)\cdot y=1=y\cdot \rho_u(y).
    \]
 which implies that $\rho_u(y) = y$ for all $u\in U$ and $y\in X$.

   The second property derives from the fact that if $\rho_y\neq \id$,
   then $\{1\}\rtimes_\rho Y$ is not an ideal of $X\rtimes_\rho Y$.
\end{proof}

By \cref{Lemma:The subideals in semidirect product}, \cref{ideals of semidirect products are semidirect products} \cref{Lemma:The equivalence of the ideal in semidirect product} and \cref{def: symmetric semi-direct product}, we can immediately obtain the following result.

\begin{thm}\label{Theorem:The equivalence of the ideal in semidirect product}
  Let $X$ and $Y$ be L-algebras such that $Y$ operates on $X$ via $\rho$.  
Then $K$ is an ideal of $X \rtimes_{\rho} Y$ if and only if $\rho$ induces an operation 
\[
\tilde{\rho} : Y / K_Y \longrightarrow \End(X / K_X)
\]
such that
\[
(X \rtimes_{\rho} Y)\big/ (K_X \rtimes_{\rho|_{K_Y}} K_Y )
\;\cong\;
X / K_X \rtimes_{\tilde{\rho}} Y / K_Y .
\]

\end{thm}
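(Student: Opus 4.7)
The strategy is to assemble the preceding results into the desired characterization. In the forward direction, assume first that $K$ is an ideal of $X \rtimes_\rho Y$. By \cref{Lemma:The subideals in semidirect product}, $K_X$ and $K_Y$ are ideals of $X$ and $Y$ respectively, and by \cref{ideals of semidirect products are semidirect products} we have $K = K_X \rtimes_{\rho|_{K_Y}} K_Y$. Moreover, \cref{Lemma:The equivalence of the ideal in semidirect product} yields conditions (I'1) and (I'2) on $\rho$ relative to $K_X$ and $K_Y$, which will be my main tools.

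To produce $\tilde{\rho}$, I would set $\tilde{\rho}_{[u]}([x]) := [\rho_u(x)]$, where $[\cdot]$ denotes the congruence class. Condition (I'1) gives $\rho_u(K_X) \subseteq K_X$, so each $\rho_u$ descends to an endomorphism of $X/K_X$; the crucial well-definedness check is that the resulting endomorphism depends only on the class of $u$ in $Y/K_Y$. I would argue this as follows: if $u$ and $u'$ represent the same class of $Y/K_Y$, then $(1,u)$ and $(1,u')$ represent the same class in $(X \rtimes_\rho Y)/K$ by \cref{Lemma:The ideal split of semidirect product}. Because the congruence $\sim_K$ is compatible with the L-algebra operation, multiplying both sides by $(x,1)$ and applying \cref{X and Y inside semidirect}(\ref{part4}) gives $(\rho_u(x),1) \sim_K (\rho_{u'}(x),1)$, whence $\rho_u(x) \equiv \rho_{u'}(x) \pmod{K_X}$, as desired.

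Once $\tilde{\rho}$ is in place, the two axioms of an operation, namely $\tilde{\rho}_{[1]} = \id$ and the twisted commutation $\tilde{\rho}_{[u]\cdot[v]} \circ \tilde{\rho}_{[u]} = \tilde{\rho}_{[v]\cdot[u]} \circ \tilde{\rho}_{[v]}$, pass from the corresponding identities for $\rho$ by quotienting. I would then consider the map $\Phi : X \rtimes_\rho Y \to X/K_X \rtimes_{\tilde{\rho}} Y/K_Y$ defined by $(x,u) \mapsto ([x],[u])$; a direct comparison of the semidirect-product operation on the two sides shows $\Phi$ is a surjective L-algebra morphism. Its kernel is the set of pairs $(x,u)$ with $x \in K_X$ and $u \in K_Y$, which by \cref{Lemma:The ideal split of semidirect product} coincides with $K_X \rtimes_{\rho|_{K_Y}} K_Y = K$, and the desired isomorphism is the standard factorization through the quotient by $K$.

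The converse implication is short: if $\tilde{\rho}$ induces the stated operation and the displayed isomorphism holds, then $K$ is the kernel of the quotient morphism onto $X/K_X \rtimes_{\tilde{\rho}} Y/K_Y$, and kernels of L-algebra morphisms are ideals. The most delicate step throughout is the well-definedness of $\tilde{\rho}_{[u]}$ with respect to the class of $u$: the invariance $\rho_u(K_X) \subseteq K_X$ granted by (I'1) is not enough on its own, and the argument must genuinely exploit that $\sim_K$ is a congruence of the whole semidirect product rather than only of $X$ and $Y$ separately.
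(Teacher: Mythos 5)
Your proof is correct and follows essentially the same route as the paper: decompose $K$ as $K_X\rtimes_{\rho|_{K_Y}}K_Y$ via \cref{Lemma:The subideals in semidirect product} and \cref{ideals of semidirect products are semidirect products}, use the conditions (I'1)--(I'2) of \cref{Lemma:The equivalence of the ideal in semidirect product} to descend $\rho$, and obtain the isomorphism from the projection $(x,u)\mapsto([x],[u])$. The only real divergence is the well-definedness of $\tilde{\rho}$ on classes of $Y/K_Y$: you deduce it from compatibility of the congruence $\sim_K$ with multiplication by $(x,1)$ (the identity you need is $(1,u)\cdot(x,1)=(\rho_u(x),1)$, which is not quite \cref{X and Y inside semidirect}(\ref{part4}) but is verified in the proof of \cref{ideals of semidirect products are semidirect products}), whereas the paper first uses (I'2) to show $\rho^{K_X}_u=\id_{X/K_X}$ for all $u\in K_Y$ and then applies the operation axiom $\rho_{v\cdot w}\circ\rho_v=\rho_{w\cdot v}\circ\rho_w$; both arguments are valid.
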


\begin{proof} 
By \cref{Lemma:The subideals in semidirect product} and \cref{ideals of semidirect products are semidirect products}, we have
\[
K = K_X \rtimes_{\rho|_{K_Y}} K_Y,
\]
where $K_X$ is an ideal of $X$ and $K_Y$ is an ideal of $Y$.

Next, set $I = K_X$ and $U = K_Y$.  

By (I'1) of \cref{Lemma:The equivalence of the ideal in semidirect product}, the map 
\[
\rho^I: Y \to \End(X/I)
\]
is well-defined. Since $I \rtimes \{1\}$ is an ideal of $X \rtimes_\rho Y$, the quotient
\[
(X \rtimes_\rho Y) / (I \rtimes \{1\}) \cong X/I \rtimes_{\rho^I} Y
\]
is an L-algebra. By (I'2) of \cref{Lemma:The equivalence of the ideal in semidirect product}, for all $u \in U$ and $y \in X/I$, we have
\[
\rho^I_u(y) \cdot y = [1]_I = y \cdot \rho^I_u(y),
\]
which implies $\rho^I_u = \id_{X/I}$ for all $u \in U$. Moreover, for all $v,w \in U$,
\[
\rho^I_v = \rho^I_{v \cdot w} \circ \rho^I_v = \rho^I_{w \cdot v} \circ \rho^I_w = \rho^I_w,
\]
so the induced map
\[
\tilde{\rho}: Y/U \to \End(X/I)
\]
is well-defined.

Conversely, if $\rho^I$ is well-defined, then (I'1) holds for $\rho$. Since $\rho^I_u = \id_{X/I}$ for all $u \in U$, it follows that 
\[
(x \cdot \rho_u(y)) \cdot y, \quad y \cdot (x \cdot \rho_u(y)) \in I
\]
for all $x,y \in X$ and $u \in U$.
\end{proof}

We can also directly imply the following corollary.

\begin{cor}\label{Cor:The equivalence of the ideal in semidirect product}
 Let $X$ and $Y$ be L-algebras such that $Y$ operates on $X$ via $\rho$.    Let $I$ be an ideal of $X$, $U$ be an ideal of $Y$.  The following statements are equivalent:
\begin{enumerate}
    \item $I\rtimes_{\rho|_{U}} U$ is an ideal of $X\rtimes_\rho Y$;
    \item $\rho$ can induce an operation $\tilde{\rho}:Y\slash U\to \End(X\slash I)$ such that  $$(X\rtimes_\rho Y)\slash (I\rtimes_{\rho|_{U}} U ) \cong X\slash I\rtimes_{\tilde{\rho}} Y\slash U;$$
    \item $\rho$ induces an operation $\rho^I:Y\to \End(X\slash I)$ such that 
    \[X\slash I\rtimes_{\rho^I|_U} U= X\slash I\times U;\]
    \item $\rho$ can induce an operation $\rho^I:Y\to \End(X\slash I)$ such that $  \{[1]_I\}\times U$ is an ideal of $X\slash I\rtimes_{\rho^I} Y$.
\end{enumerate}  
\end{cor}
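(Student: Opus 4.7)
The plan is to reduce this corollary to two results already at hand: \cref{Theorem:The equivalence of the ideal in semidirect product}, which handles (1) $\Leftrightarrow$ (2), and the preceding corollary on when $\{1\}\rtimes U$ is an ideal of a semidirect product (equivalently, when the semidirect product collapses to a direct product over $U$). Since the equivalence (1) $\Leftrightarrow$ (2) is precisely the content of \cref{Theorem:The equivalence of the ideal in semidirect product}, I would simply quote it and move on.

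For (1) $\Leftrightarrow$ (3), I would unpack (1) via \cref{Lemma:The equivalence of the ideal in semidirect product}. Condition (I'1), namely $\rho_v(I)\subseteq I$ for every $v\in Y$, is precisely what is needed for $\rho$ to descend to a map $\rho^I\colon Y\to\End(X/I)$. Once $\rho^I$ is available, specializing $x\in I$ and passing to $X/I$ turns condition (I'2) into
\[
\rho^I_u([y]_I)\cdot[y]_I=[1]_I=[y]_I\cdot\rho^I_u([y]_I)
\]
for every $u\in U$ and $y\in X$, which by the antisymmetry axiom of an L-algebra forces $\rho^I_u=\id_{X/I}$ for all $u\in U$. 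The preceding corollary then turns this triviality condition into the identity $X/I\rtimes_{\rho^I|_U}U=X/I\times U$, giving (3); the same chain run backwards gives the converse, since the existence of $\rho^I$ already encodes (I'1) and the triviality of $\rho^I$ on $U$ reconstructs (I'2) by a direct lifting argument.

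The equivalence (3) $\Leftrightarrow$ (4) is then obtained by invoking the same preceding corollary once more, this time applied to the L-algebra $X/I\rtimes_{\rho^I}Y$ with the candidate ideal $\{[1]_I\}\times U$: that corollary states that this is an ideal if and only if $\rho^I_u=\id_{X/I}$ for every $u\in U$, which is in turn exactly the condition under which the semidirect product $X/I\rtimes_{\rho^I|_U}U$ coincides with the direct product $X/I\times U$.

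The only delicate point in the write-up is bookkeeping: statements (3) and (4) implicitly presuppose that $\rho^I$ is well-defined, so in the converse directions one must be careful to recover (I'1) from this existence and to recover (I'2) from the triviality of $\rho^I$ on $U$ via the antisymmetry step. No new calculations beyond those already carried out in \cref{Lemma:The equivalence of the ideal in semidirect product} and in the preceding corollary are required; the corollary is essentially a repackaging of those two results into four equivalent forms that will be convenient in later sections.
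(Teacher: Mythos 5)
Your proposal is correct and follows essentially the same route as the paper, which states only that the corollary is "directly implied" by \cref{Theorem:The equivalence of the ideal in semidirect product}, \cref{Lemma:The equivalence of the ideal in semidirect product} and the corollary on $\{1\}\rtimes U$; you have simply written out the intended reductions, including the antisymmetry step that turns (I'2) into $\rho^I_u=\id_{X/I}$ for $u\in U$. No gaps.
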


We now focus on the L-algebra structure of the set of ideals.
The next example shows how, in general, it does not commute with the semidirect product.

\begin{exa}
Using the same L-algebras as in \cref{examlple semidir}, we obtain
\[
\langle 1\rangle = \{1\} = I_1, 
\qquad 
\langle u\rangle = Y = J_2,
\qquad 
\langle (1,u)\rangle = K_3 = I_2 \rtimes J_2 = I_2 \rtimes \langle u\rangle.
\]

Moreover, since $K_2 \cdot K_1 = K_1$, we obtain that
\[
(I_2 \rtimes J_1)\cdot (I_1 \rtimes J_1)
= K_2 \cdot K_1
= K_1
= I_1 \rtimes J_1
\neq
I_1 \rtimes J_2
= (I_2 \cdot I_1) \rtimes (J_1 \cdot J_1).
\]
\end{exa}

However, we can still deduce some structural properties about the product of two ideals of the semidirect product
and completely determine it in some specific cases.

\begin{lem}\label{product between Xx1 and any ideal}
Let $X$ and $Y$ be L-algebras such that $Y$ operates on $X$ via $\rho$. Then the following statements hold.
\begin{enumerate}

    \item
    Let $(x,u)\in X\rtimes_\rho Y$ and let $K=\langle (x,u)\rangle$ be the ideal generated by $(x,u)$. Then
    \[
        K = K_X \rtimes \langle u\rangle,
    \]
    and $\langle x\rangle \subseteq K_X$.\label{product between Xx1 and any ideal Part 1}

    \item
    Let $I\rtimes U$ and $J\rtimes V$ be ideals of $X\rtimes_\rho Y$, and let
    \[
        L = (I\rtimes U)\cdot (J\rtimes V).
    \]
    Then
    \[
        L_X \subseteq I\cdot J
        \qquad\text{and}\qquad
        L_Y \subseteq U\cdot V.
    \]\label{product between Xx1 and any ideal Part 2}

    \item
    Let $I\rtimes U$ be an ideal of $X\rtimes_\rho Y$. Then
    \[
        (X\rtimes \{1\})\cdot (I\rtimes U) = I\rtimes V,
    \]
    where $V$ is the maximal ideal of $Y$ such that $I\rtimes V$ is an ideal of $X\rtimes_\rho Y$, and where   \[
        V = \ker\bigl(\rho^I : Y \to \End(X/I)\bigr).
    \]\label{product between Xx1 and any ideal Part 3}

\end{enumerate}
\end{lem}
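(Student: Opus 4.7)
The three parts build on one another, so the plan is to prove them in the order given.

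For Part~(1), $(x,u) \in K$ combined with \cref{Lemma:The ideal split of semidirect product} immediately yields $(x,1),(1,u) \in K$, whence $\langle x \rangle \subseteq K_X$ and $\langle u \rangle \subseteq K_Y$. For the reverse inclusion $K_Y \subseteq \langle u \rangle$, I observe that $X \rtimes_\rho \langle u \rangle$ is itself an ideal of $X \rtimes_\rho Y$: conditions (I'1) and (I'2) of \cref{Lemma:The equivalence of the ideal in semidirect product} become trivial when the first component is all of $X$. Since this ideal contains $(x,u)$, we deduce $K \subseteq X \rtimes \langle u \rangle$ and hence $K_Y \subseteq \langle u \rangle$. \cref{ideals of semidirect products are semidirect products} then gives $K = K_X \rtimes \langle u \rangle$.

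For Part~(2), I use Part~(1) to describe the principal ideals appearing in the definition of $L = (I \rtimes U) \cdot (J \rtimes V)$. If $x \in L_X$, then $(x,1) \in L$ is equivalent to $\langle (x,1) \rangle \cap (I \rtimes U) \subseteq J \rtimes V$. Writing $\langle (x,1) \rangle = K_X \rtimes \{1\}$ with $\langle x \rangle \subseteq K_X$ by Part~(1), the intersection simplifies to $(K_X \cap I) \rtimes \{1\}$, forcing $K_X \cap I \subseteq J$ and therefore $\langle x \rangle \cap I \subseteq J$, i.e.\ $x \in I \cdot J$. The inclusion $L_Y \subseteq U \cdot V$ is obtained symmetrically, using $\langle (1,u) \rangle = K'_X \rtimes \langle u \rangle$ and the resulting $\langle u \rangle \cap U \subseteq V$.

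For Part~(3), setting $M := (X \rtimes \{1\}) \cdot (I \rtimes U)$, \cref{ideals of semidirect products are semidirect products} yields $M = M_X \rtimes M_Y$. Part~(2) gives $M_X \subseteq I$, while the reverse inclusion follows by verifying (via \cref{Lemma:The equivalence of the ideal in semidirect product} with $U = \{1\}$) that $I \rtimes \{1\}$ is an ideal, so that $\langle (x,1) \rangle \subseteq I \rtimes \{1\} \subseteq I \rtimes U$ whenever $x \in I$. For $M_Y$, Part~(1) yields $u \in M_Y$ iff $\langle (1,u) \rangle_X \subseteq I$. A careful iteration of axioms (I3), (I4), and the operation identity $\rho_{u \cdot v} \rho_u = \rho_{v \cdot u} \rho_v$ shows that $\langle (1,u) \rangle_X$ is generated as an ideal of $X$ by the elements $\rho_{u'}(y) \cdot y$ and $y \cdot \rho_{u'}(y)$ with $y \in X$ and $u' \in \langle u \rangle$; consequently $\langle (1,u) \rangle_X \subseteq I$ is equivalent to $\rho^I_{u'} = \id_{X/I}$ for all $u' \in \langle u \rangle$. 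Combining this with \cref{Lemma:The subideals in semidirect product} (which ensures $V := M_Y$ is already an ideal) and the (I'2) criterion of \cref{Lemma:The equivalence of the ideal in semidirect product} (which makes $I \rtimes V$ an ideal iff $V \subseteq \ker \rho^I$) identifies $V$ both as the maximal ideal of $Y$ with $I \rtimes V$ an ideal of $X \rtimes_\rho Y$ and as $\ker \rho^I$ itself.

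The principal obstacle appears in Part~(3): the explicit computation of $\langle (1,u) \rangle_X$ requires tracking the ideal axioms (I2)--(I4) through the twisted multiplication of the semidirect product and using the operation identity to control how $\rho$ propagates through the closure. Once this description is in hand, the identification of $V$ with $\ker \rho^I$ follows painlessly because $M_Y$ is automatically an ideal by \cref{Lemma:The subideals in semidirect product}.
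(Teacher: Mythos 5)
Parts (1) and (2) of your proposal are correct and essentially coincide with the paper's argument: observing that $X\rtimes\langle u\rangle$ is an ideal (trivial (I'1)/(I'2)) to pin down $K_Y=\langle u\rangle$, and then unwinding the definition of the ideal product using the decomposition $\langle (x,u)\rangle=\langle (x,u)\rangle_X\rtimes\langle u\rangle$. The first half of Part (3), namely $M_X=I$, is also fine, though your detour through ``$I\rtimes\{1\}$ is an ideal'' can be replaced by the one-line observation that $(I\rtimes U)\cap(X\rtimes\{1\})\subseteq I\rtimes U$ forces $I\rtimes U\subseteq M$ by the universal property of the ideal product recalled in the preliminaries.

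The genuine gap is in your identification of $M_Y$. You base it on the claim that $\langle(1,u)\rangle_X$ is generated, as an ideal of $X$, by the elements $\rho_{u'}(y)\cdot y$ and $y\cdot\rho_{u'}(y)$ with $u'\in\langle u\rangle$, and you flag this as ``the principal obstacle'' without proving it. Only the easy containment is immediate: applying (I3) and (I4) to $(1,u')$ and $(y,1)$ shows these elements lie in $\langle(1,u)\rangle_X$, which already gives $M_Y\subseteq\ker(\rho^I)$. The reverse containment of your claimed generating description would require showing that the ideal $W$ these elements generate satisfies (I'1) and (I'2), so that $W\rtimes\langle u\rangle$ is an ideal containing $(1,u)$ --- this is not obvious and is never established, so your proof of $\ker(\rho^I)\subseteq M_Y$ (and hence of $V=\ker(\rho^I)$) is incomplete as written. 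It is also unnecessary: since $L'\cap(X\rtimes\{1\})=L'_X\rtimes\{1\}$ for any ideal $L'$ (by \cref{ideals of semidirect products are semidirect products}), the universal property of the ideal product shows that $M$ is the \emph{largest} ideal of $X\rtimes_\rho Y$ whose $X$-part is contained in $I$. In particular every ideal of the form $I\rtimes V'$ is contained in $M=I\rtimes M_Y$, so $M_Y$ is the maximal $V$ with $I\rtimes V$ an ideal; combined with the criterion you already cite (for a $\rho$-ideal $I$, $I\rtimes V$ is an ideal if and only if $V\subseteq\ker(\rho^I)$, cf.\ \cref{Lemma:The equivalence of the ideal in semidirect product} and \cref{Maxmial ideal in Y for semiproduct}), this yields $M_Y=\ker(\rho^I)$ without any analysis of $\langle(1,u)\rangle_X$. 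This shorter route is the one the paper takes.
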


\begin{proof}~
\begin{enumerate}[font=\itshape]
    \item  By \cref{Theorem:The equivalence of the ideal in semidirect product},  $X\rtimes  \langle u\rangle$  is an ideal of $ X\rtimes_\rho Y$. So, we have $K\subseteq X\rtimes  \langle u\rangle $. Thus, $K_Y\subseteq \langle u\rangle$. In addition, $K_Y$ is an ideal of $Y$ that contains $u$. We have $K_Y= \langle u\rangle$.
    \item  By definition, $(x,u)\in L$ if and only if $\langle (x,u)\rangle\cap (I\rtimes U)\subseteq J\rtimes V$.
   By Part {\it(1)}, this means that 
   $\langle (x,u)\rangle_X\cap I \subseteq J$ and
   $\langle u\rangle \cap U\subseteq V$,
   i.e. 
   \[
   L=\{(x,u)\in (I\cdot J)\times (U\cdot V)\mid \langle (x,u)\rangle_X\cap I \subseteq J\}.
   \]
   \item Let $L$ be $(X\rtimes \{1\})\cdot ( I\rtimes U)$.
By Part {\it(2)}, we have $L_X\subseteq X\cdot I= I$. Moreover, $I\rtimes U\subseteq L$.
Hence $L_X=I$ and $L$ is the greatest ideal such that $L\cap (X\rtimes \{1\})\subseteq ( I\rtimes U)$, i.e. such that $L_X\subseteq I$.
Therefore, we prove the thesis.\qedhere
\end{enumerate}
    
\end{proof}

\begin{defn}Let $X$ and $Y$ be L-algebras such that $Y$ operates on $X$ via $\rho$.

\begin{itemize} 
    \item An ideal $I$ of $X$ is called \emph{$\rho$-ideal} if $I$ satisfies (I'1) i.e. $\rho_v(I)\subseteq I$ for every $v\in Y$. 
    \item   A proper $\rho$-ideal $I$ of $X$ is called \emph{$\rho$-prime} if 
    for every $\rho$-ideals $I_1$ and $I_2$ of $X$ and such that $I_1\cap I_2\subseteq I$,
        then either $I_1\subseteq I$ or $I_2\subseteq I$.
\end{itemize}
We denote by $\rho\mathscr{I}(X)$ the poset of $\rho$-ideals of $X$, and by $\rho$-spectrum  $\rho\Spec(X)$ the space of $\rho$-prime ideals.
   
\end{defn}

Note that the name is not an accident.
Indeed, a proper ideal $I$ is prime if and only if for every ideals $I_1$ and $I_2$ of $X$ such that $I_1\cap I_2\subseteq I$, either $I_1\subseteq I$ or $I_2\subseteq I$.
So if $I$ is prime and satisfies (I'1), it is also $\rho$-prime.

\begin{lem}\label{Maxmial ideal in Y for semiproduct}
    Let $J$ and $I$ be two $\rho$-ideals of $X$.
    Let $\rho^I:Y\to \End(X/I)$ and $\rho^J:Y\to \End(X/J)$ be the maps induced by $\rho$.
    Then the following statements hold:
\begin{enumerate}
    \item $I \rtimes \ker(\rho^I) $ (and  $J \rtimes \ker(\rho^I) $) is an ideal of the semi-direct product $ X \rtimes_\rho Y $.
    \item If $ J \subseteq I $, then $\ker(\rho^J) \subseteq \ker(\rho^I)   $.
    \item If $ U \subset \ker(\rho^I) $ is an ideal of $ Y $, then $I \rtimes U$ is an ideal of $ X \rtimes_\rho Y $.
\end{enumerate}

\end{lem}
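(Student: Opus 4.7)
The overall strategy is to reduce all three parts to the criterion of \cref{Lemma:The equivalence of the ideal in semidirect product}, which characterises ideals of the form $I\rtimes_{\rho|_{U}}U$ by the two conditions (I'1) $\rho_v(I)\subseteq I$ for every $v\in Y$ and (I'2) $(x\cdot\rho_u(y))\cdot y,\; y\cdot(x\cdot\rho_u(y))\in I$ for all $u\in U$, $x\in I$, $y\in X$. Condition (I'1) is automatic throughout, since $I$ and $J$ are $\rho$-ideals by hypothesis, so in each part the real task is to check (I'2) or to transport it between ideals.

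For Part (1), I would first invoke \cref{product between Xx1 and any ideal}(\ref{product between Xx1 and any ideal Part 3}) to realise $\ker(\rho^I)$ as the $Y$-component of $(X\rtimes\{1\})\cdot(I\rtimes\{1\})$, which is an ideal of $Y$ by \cref{Lemma:The subideals in semidirect product}, and to read off from (I'2) that $\rho^I_u=\id_{X/I}$ for every $u\in\ker(\rho^I)$. With that in hand, (I'2) for $I\rtimes\ker(\rho^I)$ reduces to a one-line computation in $X/I$: since $\rho_u(y)\equiv y$ and $x\equiv 1$ modulo $I$, both $(x\cdot\rho_u(y))\cdot y$ and $y\cdot(x\cdot\rho_u(y))$ collapse to $y\cdot y=1$. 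The same argument yields the analogous statement for $J$.

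Part (2) is essentially monotonicity of the kernel under enlarging the ideal. If $u\in\ker(\rho^J)$ then the defining property gives $\rho_u(y)\cdot y,\; y\cdot\rho_u(y)\in J\subseteq I$ for every $y\in X$, hence $\rho^I_u=\id_{X/I}$, so the ideal $\ker(\rho^J)$ lies inside the maximal ideal of $Y$ realising this property, namely $\ker(\rho^I)$. Part (3) is then immediate from the same computation as Part (1): with $U\subseteq\ker(\rho^I)$ the congruence $\rho_u(y)\equiv y\pmod I$ holds for every $u\in U$, so (I'2) is verified just as before and \cref{Lemma:The equivalence of the ideal in semidirect product} produces the ideal $I\rtimes U$.

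The only genuine subtlety, and the place where I would be most careful, is the interpretation of $\ker(\rho^I)$. Taken naively as the set $\{u\in Y:\rho^I_u=\id_{X/I}\}$, this is not obviously closed under the ideal axioms of $Y$, since having a left inverse in $\End(X/I)$ does not force a map to be the identity. The correct reading, supplied by \cref{product between Xx1 and any ideal}, is the maximal ideal of $Y$ on which the induced action on $X/I$ is trivial; with this convention fixed from the outset the proof is short and runs in parallel across the three parts.
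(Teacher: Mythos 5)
Your proof is correct and follows essentially the same route as the paper: each part is reduced to conditions (I'1)--(I'2) of \cref{Lemma:The equivalence of the ideal in semidirect product}, with (I'2) verified by passing to $X/I$ where $\rho^I_u=\id_{X/I}$ collapses $(x\cdot\rho_u(y))\cdot y$ and $y\cdot(x\cdot\rho_u(y))$ to $1$, and Part (2) is the same transport of the congruence $\rho_u(y)\equiv y$ from $J$ to the larger ideal $I$. Your additional care in justifying, via \cref{product between Xx1 and any ideal}~(\ref{product between Xx1 and any ideal Part 3}), that $\ker(\rho^I)$ is actually an ideal of $Y$ (a hypothesis needed to apply \cref{Lemma:The equivalence of the ideal in semidirect product}) addresses a point the paper's proof leaves implicit, and is a worthwhile refinement rather than a different argument.
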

\begin{proof}~
    \begin{enumerate}[font=\itshape]
        \item By \cref{Lemma:The equivalence of the ideal in semidirect product}, $I\rtimes \ker(\rho^I)$ is an ideal of $X\rtimes_\rho Y$ if and only if $I$ satisfies (I'2).
        Let $u\in \ker(\rho^I)$, $x\in I$ and $y\in X$,
        then in $X/I$ we have that
        \begin{align*}
            \big[(x\cdot \rho_u(y))\cdot y\big]_I&
            =([x]_I\cdot [\rho_u(y)]_I)\cdot [y]_I\\
            &=(1\cdot \rho^I_u([y]_I))\cdot [y]_I
            =(1\cdot [y]_I)\cdot [y]_I=1.
        \end{align*}

        Hence $(x\cdot \rho_u(y))\cdot y\in I$.
        Similarly,
        \begin{align*}
            \big[y \cdot (x\cdot \rho_u(y))\big]_I&
            =[y]_I \cdot ([x]_I\cdot [\rho_u(y)]_I)\\
            &=[y]_I \cdot (1\cdot \rho^I_u([y]_I))
            =[y]_I \cdot (1\cdot [y]_I)=1,
        \end{align*}
        i.e. $ y \cdot (x\cdot \rho_u(y))\in I$.
        Thus, we proved that (I'2) is satisfied too,
        i.e. $I\rtimes \ker(\rho^I)$ is an ideal of $X\rtimes_\rho Y$.
        
        \item Take $u\in \ker(\rho^I)$, then for every $x\in X$
        \[
        \rho^J_u\big([x]_J\big)=[\rho_u(x)]_J=[x]_J.
        \]
        Hence, $\rho_u(x)\in [x]_J$
        for every $x\in X$.
        Since $J\subseteq I$, then $[x]_J\subseteq [x]_I$ for every $x\in X$.
        Therefore $\rho_u(x)\in [x]_I$ for every $x\in X$.
        So $[\rho_u(x)]_I=[x]_I$ for every $x\in X$, which means that $\rho^I_u=id_{X/J}$, i.e. $u\in \ker(\rho^I)$.
        \item  This is clear that $I$ and $U$ satisfy conditions (I'1) and (I'2).\qedhere
    \end{enumerate}
\end{proof}

\begin{cor}
     Let $X$ and $Y$ be L-algebras such that $Y$ operates on $X$ via $\rho$. Then 
       \[
     |\mathscr{I}(X\rtimes_\rho Y)|= \sum_{I\in \rho\mathscr{I}(X)}|\{U\leq \ker(\rho^I)~|~U\in \mathscr{I}(Y)\}|.
     \]
     
\end{cor}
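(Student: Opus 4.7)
The plan is to establish a bijection between $\mathscr{I}(X\rtimes_\rho Y)$ and the disjoint union $\bigsqcup_{I\in\rho\mathscr{I}(X)}\{\,U\in\mathscr{I}(Y) \mid U\subseteq \ker(\rho^I)\,\}$, sending an ideal $K$ to the pair $(K_X,K_Y)$. The counting formula then follows immediately from the cardinalities of the fibres.

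First, I would use \cref{Lemma:The subideals in semidirect product} and \cref{ideals of semidirect products are semidirect products} to see that every ideal $K$ of $X\rtimes_\rho Y$ decomposes as $K=K_X\rtimes_{\rho|_{K_Y}}K_Y$, with $K_X\in\mathscr{I}(X)$ and $K_Y\in\mathscr{I}(Y)$. To check that $K_X$ is in fact a $\rho$-ideal, I would invoke condition (I'1) from \cref{Lemma:The equivalence of the ideal in semidirect product}, which forces $\rho_v(K_X)\subseteq K_X$ for every $v\in Y$. Then, using condition (I'2) of the same proposition with $x=1$, one gets $(\rho_u(y))\cdot y\in K_X$ and $y\cdot \rho_u(y)\in K_X$ for every $u\in K_Y$ and $y\in X$, which translates precisely into $\rho^{K_X}_u=\id_{X/K_X}$, i.e.\ $K_Y\subseteq \ker(\rho^{K_X})$.

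The converse direction is supplied directly by \cref{Maxmial ideal in Y for semiproduct}(\textit{3}): whenever $I$ is a $\rho$-ideal of $X$ and $U\in\mathscr{I}(Y)$ with $U\subseteq\ker(\rho^I)$, the set $I\rtimes_{\rho|_U} U$ is an ideal of $X\rtimes_\rho Y$. Since the assignment $K\mapsto (K_X,K_Y)$ is clearly injective (the pair determines $K$ via \cref{Lemma:The ideal split of semidirect product}) and, by the discussion above, both surjective onto and injective into the prescribed set of pairs, we obtain the desired bijection.

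Finally, I would conclude by partitioning the ideals of $X\rtimes_\rho Y$ according to their first component $I=K_X\in\rho\mathscr{I}(X)$; the fibre over $I$ is in bijection with $\{\,U\in\mathscr{I}(Y)\mid U\subseteq\ker(\rho^I)\,\}$, so summing the fibre sizes yields the stated equality. I do not expect any serious obstacle: the two main ingredients (the forced shape $K_X\rtimes K_Y$ and the characterisation of which pairs assemble into an ideal) are already recorded in \cref{Lemma:The equivalence of the ideal in semidirect product} and \cref{Maxmial ideal in Y for semiproduct}, so the argument is essentially a bookkeeping exercise once these are in place.
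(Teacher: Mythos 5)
Your proof is correct and follows exactly the route the paper intends (the corollary is stated there without an explicit proof, as an immediate consequence of the preceding results): the decomposition $K=K_X\rtimes_{\rho|_{K_Y}}K_Y$, the extraction of (I'1) and of $K_Y\subseteq\ker(\rho^{K_X})$ from (I'2) with $x=1$, and the converse via \cref{Maxmial ideal in Y for semiproduct} together give precisely the bijection $K\mapsto(K_X,K_Y)$ that yields the counting formula. No gaps.
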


\begin{pro}
\label{rhoI is a sublattice of I}
 Let $X$ and $Y$ be L-algebras such that $Y$ operates on $X$ via $\rho$. Then \[\rho \mathscr{I}(X)= \{I\in \mathscr{I}(X)\mid I\rtimes_{\rho}\{1\}\in \mathscr{I}(X\rtimes_{\rho} Y)\}.\]
 Moreover, $\rho \mathscr{I}(X)$ is a complete sublattice of $\mathscr{I}(X)$, and is distributive. 
\end{pro}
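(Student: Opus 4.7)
The plan is to split the statement into three parts: the set-theoretic equality, closure under meets and joins, and distributivity. Each step uses only the characterization of ideals of the semidirect product together with the distributivity of $\mathscr{I}(X)$.

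For the equality, I would apply \cref{Lemma:The equivalence of the ideal in semidirect product} with $U=\{1\}$. In that case the only element of $U$ is $1$, so $\rho_u=\id_X$, and condition (I'2) reduces to requiring $(x\cdot y)\cdot y\in I$ and $y\cdot(x\cdot y)\in I$ for all $x\in I$ and $y\in X$, which are precisely the ideal axioms (I3) and (I4) and hence automatic. Therefore $I\rtimes_{\rho}\{1\}$ is an ideal of $X\rtimes_{\rho} Y$ if and only if (I'1) holds, which is exactly the defining condition of a $\rho$-ideal.

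To see that $\rho\mathscr{I}(X)$ is closed under arbitrary meets, I would simply note that intersections of ideals are ideals in $\mathscr{I}(X)$, and $\rho_v\bigl(\bigcap_\alpha I_\alpha\bigr)\subseteq\bigcap_\alpha\rho_v(I_\alpha)\subseteq\bigcap_\alpha I_\alpha$ for each $v\in Y$. The main work is closure under arbitrary joins. The key observation is that each $\rho_v:X\to X$ is an L-algebra endomorphism, so the preimage $\rho_v^{-1}(J)$ of any ideal $J\subseteq X$ is again an ideal of $X$, by a routine verification of (I1)--(I4) using $f(a\cdot b)=f(a)\cdot f(b)$. Given a family of $\rho$-ideals $\{I_\alpha\}$ with join $J=\bigvee_\alpha I_\alpha$ computed in $\mathscr{I}(X)$, the inclusion $\rho_v(I_\alpha)\subseteq I_\alpha\subseteq J$ gives $I_\alpha\subseteq\rho_v^{-1}(J)$ for every $\alpha$, and therefore $J\subseteq\rho_v^{-1}(J)$, i.e.\ $\rho_v(J)\subseteq J$. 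Hence $J\in\rho\mathscr{I}(X)$, and joins in $\rho\mathscr{I}(X)$ coincide with those in $\mathscr{I}(X)$.

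Distributivity is then immediate: since $\rho\mathscr{I}(X)$ is a sublattice of $\mathscr{I}(X)$ with inherited meets and joins, it inherits distributivity from \cref{Thm: The distributive rule of L-algebras}. The only step requiring care is the closure under infinite joins, because the elementwise description of $I\vee J$ from \cref{Thm: The elements in joint ideals} does not directly extend to arbitrary families; the preimage argument neatly avoids this difficulty by never describing the elements of $\bigvee_\alpha I_\alpha$ explicitly.
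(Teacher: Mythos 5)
Your proof is correct, and for the join-closure step it takes a genuinely different route from the paper. For the set-theoretic equality, both arguments ultimately reduce to condition (I'1) of \cref{Lemma:The equivalence of the ideal in semidirect product}; your observation that with $U=\{1\}$ condition (I'2) collapses to the ideal axioms (I3) and (I4) is a clean way to see the forward implication, whereas the paper instead invokes \cref{Maxmial ideal in Y for semiproduct} with $U=\ker(\rho^I)$ and then restricts to $\{1\}$. The real divergence is in closure under joins: the paper treats only the binary case, using the elementwise description of $I_1\vee I_2$ from \cref{Thm: The elements in joint ideals} (there is $x\in I_1$ with $x\equiv y\pmod{I_2}$, and one pushes this congruence through $\rho_v$), while you observe that $\rho_v^{-1}(J)$ is an ideal whenever $J$ is (preimages of ideals under L-algebra morphisms satisfy (I1)--(I4) by a routine check) and that it contains every $I_\alpha$, hence the join. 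Your argument buys completeness for free: it handles arbitrary families in one stroke, whereas the paper's binary argument, taken literally, only gives a sublattice and would need a supplementary remark (e.g.\ that directed unions of $\rho$-ideals are $\rho$-ideals, so arbitrary joins reduce to finite ones) to justify the word ``complete'' in the statement. The distributivity conclusion is identical in both: it is inherited from \cref{Thm: The distributive rule of L-algebras} because meets and joins in $\rho\mathscr{I}(X)$ coincide with those of $\mathscr{I}(X)$.
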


\begin{proof}
Let $I$ be a $\rho$-ideal and $U=\ker(\rho^I)$.  By \cref{Maxmial ideal in Y for semiproduct}, therefore, $I\rtimes_{\rho}\{1\}$ is an ideal of $X\rtimes_{\rho} Y$. Vice versa, if $I\rtimes_{\rho}\{1\}$ is an ideal of $X\rtimes_{\rho} Y$, $I$ satisfies (I'1). Therefore, $\rho \mathscr{I}(X)= \{I\mid I\rtimes_{\rho}\{1\}\in \mathscr{I}(X\rtimes_{\rho} Y)\}$.

Next, we will show that $\rho \mathscr{I}(X)$ is a complete sublattice.

    Let $\{I_{\alpha}~|~\alpha\in \mathscr{Z}\}$ be a set of $\rho$-ideals and $v\in Y$. Then $\rho_v(\cap_{\alpha\in \mathscr{Z}} I_\alpha)\subseteq I_\alpha$, for each $\alpha\in \mathscr{Z}$. Thus, the $\rho$-ideals are closed with respect to intersections.
    
    Let $I_1$ and $I_2$ be $\rho$-ideals, $y\in I_1\vee I_2$ and $v\in Y$. By \cref{Thm: The elements in joint ideals}, there exists an element $x \in I_1$ with $x \equiv y \pmod{I_2}$. Since $\rho_v(I_1)\subseteq I_1$ and $\rho_v( I_2)\subseteq I_2$, then $\rho_v(x) \in I_1$ and $\rho_v(x) \equiv \rho_v(y)\pmod{I_2}$. Then $\rho_v(y)\in I_1\cap I_2$. Thus, the $\rho$-ideals are closed with respect to the joints. 
 Therefore,   $\rho\mathscr{I}(X)$ is a complete sublattice of $\mathscr{I}(X)$ .
\end{proof}

\begin{pro}~
\label{some prime ideals of semidirect}
\begin{enumerate}
    \item Let $U$ be a prime ideal of $Y$, then $X\rtimes U$ is a prime ideal of $X\rtimes_\rho Y$.
    \item Let $I$ be a $\rho$-prime ideal of $X$ and $U=\ker(\rho^I)$.
    Then $I\rtimes U$ is a prime ideal of $X\rtimes_\rho Y$.
\end{enumerate}    
\end{pro}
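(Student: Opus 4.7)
The plan is to use the intersection-based characterization of primality highlighted by the authors right after the definition of prime ideals (a proper ideal $P$ is prime iff $I_1 \cap I_2 \subseteq P$ forces $I_1 \subseteq P$ or $I_2 \subseteq P$, and similarly for $\rho$-primality), together with the decomposition $K = K_X \rtimes_{\rho|_{K_Y}} K_Y$ provided by \cref{ideals of semidirect products are semidirect products}. In both parts, an arbitrary ideal $K_j$ of $X\rtimes_\rho Y$ is replaced by its ``coordinates'' $(K_j)_X$ and $(K_j)_Y$, and the prime/$\rho$-prime hypothesis is then applied in one coordinate.

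For part \emph{(1)}, I first note that $X\rtimes U$ is an ideal because conditions (I'1)--(I'2) of \cref{Lemma:The equivalence of the ideal in semidirect product} are trivial when $I=X$, and it is proper since $U \ne Y$. For primality, suppose $K_1 \cap K_2 \subseteq X\rtimes U$. Using that intersection of ideals decomposes coordinate-wise, projecting onto the $Y$-component yields $(K_1)_Y \cap (K_2)_Y \subseteq U$. Since $(K_j)_Y$ are ideals of $Y$ by \cref{Lemma:The subideals in semidirect product} and $U$ is prime in $Y$, one of them is contained in $U$, and by \cref{Lemma:The ideal split of semidirect product} this implies the corresponding $K_j \subseteq X\rtimes U$.

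For part \emph{(2)}, \cref{Maxmial ideal in Y for semiproduct}(1) guarantees that $I\rtimes U$ is an ideal, and properness follows from $I \ne X$. Let $K_1 \cap K_2 \subseteq I\rtimes U$. Projecting onto the $X$-component gives $(K_1)_X \cap (K_2)_X \subseteq I$, and condition (I'1) of \cref{Lemma:The equivalence of the ideal in semidirect product} tells us that each $(K_j)_X$ is a $\rho$-ideal. Hence $\rho$-primality of $I$ gives, say, $(K_1)_X \subseteq I$. The remaining step is to upgrade this to $K_1 \subseteq I\rtimes U$, which is the main obstacle: we must show $(K_1)_Y \subseteq U = \ker(\rho^I)$. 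For this I apply (I'2) of \cref{Lemma:The equivalence of the ideal in semidirect product} to the ideal $K_1$ with $x=1$: for every $u \in (K_1)_Y$ and $y\in X$, both $\rho_u(y)\cdot y$ and $y\cdot \rho_u(y)$ lie in $(K_1)_X$, so $\rho^{(K_1)_X}_u = \id_{X/(K_1)_X}$, i.e.\ $(K_1)_Y \subseteq \ker(\rho^{(K_1)_X})$. Finally, the monotonicity statement \cref{Maxmial ideal in Y for semiproduct}(2) applied to $(K_1)_X \subseteq I$ gives $\ker(\rho^{(K_1)_X}) \subseteq \ker(\rho^I) = U$, so $(K_1)_Y \subseteq U$ and $K_1 \subseteq I\rtimes U$ as required.

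The crux of the argument is thus the interplay between the coordinate splitting of ideals and the ``kernel monotonicity'' provided by \cref{Maxmial ideal in Y for semiproduct}; this is what allows the $\rho$-primality hypothesis, which only controls the $X$-coordinate, to propagate into the $Y$-coordinate and hence recover full primality of $I\rtimes U$ in $X\rtimes_\rho Y$.
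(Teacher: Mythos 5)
Your proof is correct and, for part \emph{(2)}, follows essentially the paper's own argument: decompose each ideal coordinate-wise via \cref{ideals of semidirect products are semidirect products}, apply $\rho$-primality to the $X$-components, and propagate the containment to the $Y$-components through kernel monotonicity — the paper packages this last step as the maximality statement in \cref{product between Xx1 and any ideal}(\ref{product between Xx1 and any ideal Part 3}), but the underlying mechanism (deriving $(K_1)_Y\subseteq\ker(\rho^{(K_1)_X})\subseteq\ker(\rho^I)$ from (I'2) and \cref{Maxmial ideal in Y for semiproduct}) is exactly what you spell out. For part \emph{(1)} you argue directly with the intersection characterization of primality on the $Y$-coordinates, whereas the paper passes to the quotient $(X\rtimes_\rho Y)/(X\rtimes U)\cong Y/U$ and invokes subdirect irreducibility; both are valid, and your version is the more self-contained of the two.
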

\begin{proof}~
\begin{enumerate}[font=\itshape]
    \item By \cref{Theorem:The equivalence of the ideal in semidirect product},
    $(X\rtimes_\rho Y)/(X\rtimes U)\cong Y/U$.
    Since $U$ is a prime ideal,
    $Y/U$ is subdirectly irreducible.
    Thus $X\rtimes U$ is a prime ideal of $X\rtimes_\rho Y$.
    \item 
    Let $I_1\rtimes U_1$ and $I_1\rtimes U_1$ be ideals of $X\rtimes_\rho Y$, such that 
    $$(I_1\rtimes U_1)\cap (I_2\rtimes U_2)\subseteq I\rtimes U.$$
    Thus, we have $I_1\cap I_2\subseteq I$. Since $I_1$ and $I_2$ satisfy (I'1), then either $I_1\subseteq I$ or $I_2\subseteq I$. By \cref{product between Xx1 and any ideal} (\ref{product between Xx1 and any ideal Part 3}), either $I_1\rtimes U_1\subseteq I\rtimes U$ or $I_2\rtimes U_2\subseteq I\rtimes U$. Therefore, $I\rtimes U$ is a prime ideal.\qedhere
\end{enumerate}\end{proof}

\begin{thm}\label{Thm: prime ideals of semidirect product of L-algebras}
    Let $P$ be an ideal of $X$ and $Q$ be an ideal of $Y$.
    $P\rtimes Q$ is a prime ideal of $X\rtimes_\rho Y$ if and only if one of the following holds:
    \begin{enumerate}
        \item $P=X$ and $Q$ is a prime ideal of $Y$;
        \item $P$ is a $\rho$-prime ideal of $X$ and $Q=\ker(\rho^P)$.
    \end{enumerate}
    
    Moreover, in this case,
\[
\Spec(X \rtimes_\rho Y) \cong \rho\Spec(X) \sqcup \Spec(Y).
\]

\end{thm}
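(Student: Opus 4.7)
The plan is to observe that one direction is already proven in \cref{some prime ideals of semidirect} and focus on the converse, then derive the spectrum identification as a formal consequence. By \cref{ideals of semidirect products are semidirect products}, any prime ideal $K$ of $X \rtimes_\rho Y$ must have the shape $K = P \rtimes Q$ with $P = K_X$ an ideal of $X$ and $Q = K_Y$ an ideal of $Y$; condition (I'1) of \cref{Lemma:The equivalence of the ideal in semidirect product} makes $P$ automatically a $\rho$-ideal, and condition (I'2) specialized to $x = 1$ forces $\rho^P_u = \id_{X/P}$ for every $u \in Q$, giving $Q \subseteq \ker(\rho^P)$.

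Next I would split on whether $P = X$. If $P = X$, then $X \rtimes U$ is automatically an ideal of $X \rtimes_\rho Y$ for every ideal $U$ of $Y$ (both (I'1) and (I'2) are trivial with $I = X$); given ideals $U_1, U_2$ of $Y$ with $U_1 \cap U_2 \subseteq Q$, the identity $(X \rtimes U_1) \cap (X \rtimes U_2) = X \rtimes (U_1 \cap U_2) \subseteq X \rtimes Q$ together with primality of $K$ forces $U_1 \subseteq Q$ or $U_2 \subseteq Q$, proving $Q$ prime in $Y$. If instead $P \subsetneq X$, then $X \rtimes \{1\} \not\subseteq K$, so primality of $K$ combined with \cref{product between Xx1 and any ideal} (\ref{product between Xx1 and any ideal Part 3}) gives
\[
P \rtimes \ker(\rho^P) \;=\; (X \rtimes \{1\}) \cdot (P \rtimes Q) \;\subseteq\; P \rtimes Q,
\]
yielding $\ker(\rho^P) \subseteq Q$, and combined with the reverse inclusion obtained above this gives $Q = \ker(\rho^P)$. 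For the $\rho$-primality of $P$, I would take $\rho$-ideals $I_1, I_2$ of $X$ with $I_1 \cap I_2 \subseteq P$, lift them to ideals $I_i \rtimes \{1\}$ of $X \rtimes_\rho Y$ via \cref{Maxmial ideal in Y for semiproduct}, and apply primality of $K$ to $(I_1 \cap I_2) \rtimes \{1\} \subseteq P \rtimes Q$ to conclude $I_1 \subseteq P$ or $I_2 \subseteq P$.

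Once the classification of primes is established, the assignment
\[
\Spec(X \rtimes_\rho Y) \longrightarrow \rho\Spec(X) \sqcup \Spec(Y),
\qquad P \rtimes Q \mapsto \begin{cases} P & \text{if } P \subsetneq X,\\ Q & \text{if } P = X, \end{cases}
\]
is a bijection, with the two components sitting inside $\Spec(X \rtimes_\rho Y)$ as the open set $\mathscr{U}_{X \rtimes \{1\}}$ and its closed complement respectively. For the topological identification, pulling back a basic open $\mathscr{U}_{I \rtimes U}$ through this bijection recovers a basic open $\{P : I \not\subseteq P\}$ of $\rho\Spec(X)$ (by specializing $U = \{1\}$) and a basic open $\{Q : U \not\subseteq Q\}$ of $\Spec(Y)$ (by specializing $I = X$), which yields the homeomorphism. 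I expect the most delicate bookkeeping to be the identification $Q = \ker(\rho^P)$ in the case $P \subsetneq X$, but \cref{product between Xx1 and any ideal} (\ref{product between Xx1 and any ideal Part 3}) reduces it to a one-line computation.
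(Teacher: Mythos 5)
Your proposal is correct and follows essentially the same route as the paper's proof: the forward direction via \cref{some prime ideals of semidirect}, the dichotomy on whether $X\rtimes\{1\}$ is contained in the prime ideal, and \cref{product between Xx1 and any ideal}~(\ref{product between Xx1 and any ideal Part 3}) to force $Q=\ker(\rho^P)$. The one small deviation is your $\rho$-primality step, which intersects the lifted ideals $I_i\rtimes\{1\}$ instead of $I_i\rtimes\ker(\rho^{I_i})$ as the paper does; this is arguably cleaner, since it sidesteps the comparison of kernels $\ker(\rho^{P_1})\cap\ker(\rho^{P_2})$ with $\ker(\rho^{P})$ on which the paper's version relies.
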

\begin{proof}
   By \cref{some prime ideals of semidirect}, we know that in both cases we obtain a prime ideal of $X \rtimes_\rho Y$.

Now suppose that $P \rtimes Q$ is a prime ideal.  
Since $X \rtimes \{1\}$ is an ideal of $X \rtimes_\rho Y$, we must have either
\[
    X \rtimes \{1\} \subseteq P \rtimes Q
    \qquad\text{or}\qquad
    (X \rtimes \{1\}) \cdot (P \rtimes Q) \subseteq P \rtimes U.
\]

In the first case, $X \rtimes \{1\} \subseteq P \rtimes Q$, which implies $P = X$.  
Moreover, $Q$ is a prime ideal of $Y$ because $Y/P$ is subdirectly irreducible: indeed,
$(X \rtimes_\rho Y)/(P \rtimes Q)$ is subdirectly irreducible and, by 
\cref{Theorem:The equivalence of the ideal in semidirect product},
\[
    (X \rtimes_\rho Y)/(X \rtimes P) \cong Y/P.
\]

    In the second case,
    $P\rtimes Q\supseteq(X\rtimes \{1\})\cdot ( P\rtimes Q) $
    and by \cref{product between Xx1 and any ideal} (\ref{product between Xx1 and any ideal Part 3}),
    \[
    (X\rtimes \{1\})\cdot ( P\rtimes Q) = P\rtimes \ker(\rho^P),
    \]
    where $V=\ker(\rho^P)$ is an ideal of $Y$ that is maximal such that $P\rtimes V$ is an ideal of $X\rtimes_\rho Y$.
    But inclusion $P\rtimes Q\supseteq P\rtimes V$ forces $Q=\ker(\rho^P)$.

    Consider ideals $I_1$ and $I_2$ of $X$ that satisfy (I'1) and such that  
$P_1 \cap P_2 \subseteq P$. Then $P_1 \cap P_2$ also satisfies (I'1).  
By \cref{Maxmial ideal in Y for semiproduct}, we have
\[
    \ker(\rho^P)
    \subseteq \ker(\rho^{P_1 \cap P_2})
    = \ker(\rho^{P_1}) \cap \ker(\rho^{P_2}).
\]
Hence,
\[
    \bigl(P_1 \rtimes \ker(\rho^{P_1})\bigr)
    \cap 
    \bigl(P_2 \rtimes \ker(\rho^{P_2})\bigr)
    \subseteq
    \bigl(P \rtimes \ker(\rho^P)\bigr).
\]
Since $P \rtimes \ker(\rho^P)$ is prime, we must have either 
\[
    P_1 \rtimes \ker(\rho^{P_1}) \subseteq P \rtimes \ker(\rho^P)
    \quad\text{or}\quad
    P_2 \rtimes \ker(\rho^{P_2}) \subseteq P \rtimes \ker(\rho^P).
\]
Thus either $P_1 \subseteq P$ or $P_2 \subseteq P$.
This proves that $P$ is a $\rho$-prime ideal.
\end{proof}

\begin{rem}
    The bijection
\[
f : \Spec(X \rtimes_{\rho} Y)\longrightarrow \rho\Spec(X) \sqcup \Spec(Y),
\]
defined by
\[
f(X \rtimes Q)=Q 
\quad\text{and}\quad 
f(P \rtimes Q)=P \ \text{for } P \neq X,
\]
is an open map, where the subspace $\rho\Spec(X)$ is endowed with the subspace topology inherited from $\Spec(X)$.
\end{rem}
\begin{proof}
  
Let $I\rtimes U$ be an ideal of $X\rtimes_{\rho}Y$ with $I\neq X$, and let $P\rtimes Q$ be a prime ideal of $X\rtimes_{\rho}Y$. By \cref{Maxmial ideal in Y for semiproduct},
\[
I\rtimes U \subseteq P\rtimes Q 
\quad \Longleftrightarrow \quad 
I \subseteq P.
\]
Therefore,
\[
f(\mathscr{U}_{I\rtimes U})
= (\mathscr{U}_{I} \cap \rho\Spec(X)) \sqcup \mathscr{U}_{U},
\qquad
f(\mathscr{U}_{X\rtimes_\rho V})
= \rho\Spec(X) \sqcup \mathscr{U}_{V},
\quad (V\in \mathscr{I}(Y)),
\]
which shows that $f$ is an open map. 
\end{proof}

By \cref{Thm: prime ideals of semidirect product of L-algebras}, we can explicitly describe the prime spectrum of the semidirect product of KL-algebras as follows.
\begin{pro}\label{Thm: prime ideals of semidirect product of KL-algebras}
    Let $X$ and $Y$ be KL-algebras such that $Y$ operates on $X$ via $\rho$ as KL-algebras. Then  \[\rho \mathscr{I}(X)= \mathscr{I}(X).\]
      In this case, \[\rho\Spec(X) = \Spec(X) \quad \text{and} \quad \Spec(X\rtimes_\rho Y)\cong \Spec(X)\sqcup\Spec(Y).\]
\end{pro}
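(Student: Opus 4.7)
The plan is to reduce the proposition to \cref{Thm: prime ideals of semidirect product of L-algebras} by proving the single substantive claim that $\rho\mathscr{I}(X)=\mathscr{I}(X)$; the two statements about spectra then follow essentially formally. The key extra hypothesis available in the KL-setting is the action compatibility identity $x\cdot\rho_u(x)=1$ from \cref{Def: operate on KL-alg}, and I expect this to be exactly what forces every ideal to be $\rho$-stable. No heavy machinery should be needed.

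For the inclusion $\mathscr{I}(X)\subseteq\rho\mathscr{I}(X)$, I would fix an ideal $I\subseteq X$, an element $x\in I$, and $v\in Y$, and show directly that $\rho_v(x)\in I$. The verification is one line: from $x\cdot\rho_v(x)=1\in I$ and $x\in I$, condition (I1) immediately gives $\rho_v(x)\in I$. (Note $1\in I$ always, since by (I2) applied with $y=x$ we obtain $x\cdot x=1\in I$.) The reverse inclusion $\rho\mathscr{I}(X)\subseteq\mathscr{I}(X)$ holds by definition. Hence $\rho\mathscr{I}(X)=\mathscr{I}(X)$.

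Next, I would deduce $\rho\Spec(X)=\Spec(X)$. Since the underlying families of ideals coincide, a proper ideal $I$ satisfies the $\rho$-prime condition — for all $\rho$-ideals $I_1,I_2$ with $I_1\cap I_2\subseteq I$, either $I_1\subseteq I$ or $I_2\subseteq I$ — if and only if it satisfies the prime condition, where the quantifier runs over all ideals. These two conditions are literally the same set of requirements once $\rho\mathscr{I}(X)=\mathscr{I}(X)$, and the topologies on $\rho\Spec(X)$ and $\Spec(X)$ are defined by the same family of opens $\mathscr{U}_I$, so the equality is as topological spaces.

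Finally, the decomposition $\Spec(X\rtimes_\rho Y)\cong\Spec(X)\sqcup\Spec(Y)$ follows by substituting $\rho\Spec(X)=\Spec(X)$ into the conclusion of \cref{Thm: prime ideals of semidirect product of L-algebras} and invoking the openness remark immediately after it. I do not foresee a real obstacle: the entire content of the proposition lives in the one-line application of (I1) to $x\cdot\rho_v(x)=1$, and everything else is bookkeeping.
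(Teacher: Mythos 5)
Your proposal is correct and follows essentially the same route as the paper: both hinge on the one-line observation that $x\cdot\rho_u(x)=1$ together with (I1) forces $\rho_u(x)$ into any ideal containing $x$ (the paper phrases this via $\langle x\rangle$, you via an arbitrary ideal $I\ni x$, which is equivalent), and both then deduce the spectral statements by substituting into \cref{Thm: prime ideals of semidirect product of L-algebras}.
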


\begin{proof}By \cref{Def: operate on KL-alg}, we have
\[
x \cdot \rho_u(x) = 1,
\]
for all $ x \in X $ and $ u \in Y $. It follows that for any $u \in Y$,
\[
x \cdot \rho_u(x) = 1 \in \langle x\rangle.
\]
By condition~(I1), we conclude that 
\[
\rho_u(x) \in \langle x\rangle 
\quad\text{for all }u \in Y.
\]

Therefore, every ideal of $ X $ is automatically a $\rho$-ideal. By \cref{Thm: prime ideals of semidirect product of L-algebras}, we obtain the following characterizations:
\[
\rho\mathscr{I}(X) = \mathscr{I}(X), \quad \text{and} \quad \rho\Spec(X) = \Spec(X).\qedhere
\]
\end{proof}

\begin{pro}\label{Thm: ideals of symmetric semidirect product}
    Let $X$ and $Y$ be CKL-algebras such that $Y$ acts on $X$ via $\rho$ as CKL-algebras. 
    Let $L$ be an ideal of the symmetric semidirect product $X \sym_\rho Y$. Define 
    \[
        \widetilde{L} = L_{X} \rtimes_{\rho|_{L_Y}} L_Y \subseteq X \rtimes_\rho Y.
    \]
    Then the assignments 
    \[
        L \longmapsto \widetilde{L}
        \quad \text{and} \quad
        K \cap (X \sym_\rho Y)\longmapsfrom  K
    \]
    establish a bijective correspondence between the ideals of the symmetric semidirect product $X \sym_\rho Y$ and those of the semidirect product $X \rtimes_\rho Y$.
\end{pro}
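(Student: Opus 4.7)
I will show the two assignments are well-defined and mutually inverse. The assignment $K\mapsto K\cap (X\sym_\rho Y)$ is immediately well-defined: since $X\sym_\rho Y$ is a CKL-algebra, by \cref{rem: ideals of KL and CKL-alg} an ideal is determined by containing $(1,1)$ and satisfying (I1), and both properties pass from $K$ to its intersection with the subalgebra $X\sym_\rho Y$. The content of the proposition therefore lies in showing that $\widetilde{L} = L_X\rtimes_{\rho|_{L_Y}} L_Y$ is an ideal of $X\rtimes_\rho Y$.

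So fix an ideal $L$ of $X\sym_\rho Y$. By the symmetric parts of \cref{Lemma:The subideals in semidirect product} and \cref{ideals of semidirect products are semidirect products}, $L = L_X\sym_{\rho|_{L_Y}} L_Y$ with $L_X$ and $L_Y$ ideals of $X$ and $Y$ respectively. To show $\widetilde{L}$ is an ideal of $X\rtimes_\rho Y$, I invoke \cref{Lemma:The equivalence of the ideal in semidirect product} and verify (I'1) and (I'2). Observe that $(x,1)$ and $(1,u)$ always lie in $X\sym_\rho Y$, since $\rho_1=\id$ and $\rho_u(1)=1$. Condition (I'1) is quick: for $x\in L_X$ and $v\in Y$, the KL-operating identity $x\cdot\rho_v(x)=1$ yields $(x,1)\cdot(\rho_v(x),1)=(1,1)\in L$, so (I1) applied inside $L$ gives $(\rho_v(x),1)\in L$, i.e.\ $\rho_v(x)\in L_X$. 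The easy half of (I'2), $y\cdot(x\cdot\rho_u(y))\in L_X$, is purely algebraic in $X$: the CKL identity combined with $y\cdot\rho_u(y)=1$ gives $y\cdot(x\cdot\rho_u(y)) = x\cdot(y\cdot\rho_u(y)) = x\cdot 1 = 1$.

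The main obstacle is the remaining identity $(x\cdot\rho_u(y))\cdot y \in L_X$ for $u\in L_Y$, $x\in L_X$, $y\in X$. The key is to use the CKL-operating identity $\rho_u(a\cdot b)=a\cdot\rho_u(b)$ to rewrite $x\cdot\rho_u(y) = \rho_u(x\cdot y)$, and then exploit $(1,u)\in L$. Applying (I3) inside $L$ (which, in the CKL-algebra $X\sym_\rho Y$, follows automatically from (I1) by \cref{rem: ideals of KL and CKL-alg}) to $(1,u)$ and $(x\cdot y,1)$ gives $\bigl(\rho_u(x\cdot y)\cdot(x\cdot y),1\bigr)\in L$, so $\rho_u(x\cdot y)\cdot(x\cdot y)\in L_X$. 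The CKL identity $a\cdot(b\cdot c)=b\cdot(a\cdot c)$ in $X$ then rearranges this as $x\cdot\bigl(\rho_u(x\cdot y)\cdot y\bigr)\in L_X$, and since $x\in L_X$, (I1) for $L_X$ delivers $\rho_u(x\cdot y)\cdot y = (x\cdot\rho_u(y))\cdot y \in L_X$, as required.

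Finally, the two assignments are mutually inverse. Starting with $L$, the symmetric version of \cref{ideals of semidirect products are semidirect products} gives $\widetilde{L}\cap(X\sym_\rho Y) = \{(x,u)\mid x\in L_X,\ u\in L_Y,\ \rho_u(x)=x\} = L_X\sym_{\rho|_{L_Y}}L_Y = L$. Conversely, starting from $K$ and letting $L = K\cap(X\sym_\rho Y)$, one has $(x,1)\in L \Leftrightarrow (x,1)\in K$ and $(1,u)\in L \Leftrightarrow (1,u)\in K$, so $L_X=K_X$ and $L_Y=K_Y$; hence $\widetilde{L} = K_X\rtimes_{\rho|_{K_Y}} K_Y = K$ by \cref{ideals of semidirect products are semidirect products}.
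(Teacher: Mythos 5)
Your proof is correct and follows essentially the same route as the paper's: decompose $L$ as $L_X\sym_{\rho|_{L_Y}}L_Y$, verify conditions (I'1) and (I'2) of \cref{Lemma:The equivalence of the ideal in semidirect product} using the KL- and CKL-operating identities, and check that the two assignments are mutually inverse. The only notable difference is that for $(x\cdot\rho_u(y))\cdot y\in L_X$ you apply (I3) to $(1,u)$ and $(x\cdot y,1)$ and then finish with the CKL law and (I1) for $L_X$, whereas the paper applies (I3) to $(x,u)$ and $(y,1)$; your variant has the small advantage of only invoking elements that manifestly lie in $X\sym_\rho Y$.
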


\begin{proof}
    Let $L$ be an ideal of $X \sym_\rho Y$.  
    By \cref{Thm: prime ideals of semidirect product of KL-algebras}, every ideal of $X$ is a $\rho$-ideal, hence $L_X$ satisfies condition (I'1). 

    Let $x \in L_X$, $u \in L_Y$, and $y \in X$.  
    By \cref{Def: operate on CKL-alg}(\ref{Def: operate on CKL-alg part 2}), we have 
    \[
        y \cdot (x \cdot \rho_u(y)) = \rho_u\bigl(y \cdot (x \cdot y)\bigr) \in L_X.
    \]
    Moreover, since $(x,u) \in L$, $(y,1) \in X \sym_\rho Y$, and $L$ is an ideal of $X \sym_\rho Y$, it follows that 
    \[
        \bigl((x,u) \cdot (y,1)\bigr) \cdot (y,1)
        = \bigl((x \cdot \rho_u(y)) \cdot y, 1\bigr) \in L.
    \]
    Hence $(x \cdot \rho_u(y)) \cdot y \in L_X$, showing that condition (I'2) holds for $\widetilde{L}$.  
    Therefore, by \cref{Lemma:The equivalence of the ideal in semidirect product}, $\widetilde{L}$ is an ideal of $X \rtimes_\rho Y$.

    Since $X \sym_\rho Y$ is an L-subalgebra of $X \rtimes_\rho Y$, for any ideal $K$ of $X \rtimes_\rho Y$, the intersection 
    \[
        K \cap (X \sym_\rho Y)
    \]
    is an ideal of $X \sym_\rho Y$.  
    These two constructions are inverses of each other, establishing the claimed bijective correspondence.
\end{proof}

\section{Simple linear L-algebras and CKL-algebras}\label{Section: Simple linear L-algebras and CKL-algebras}

 In \cite[Lemma~4.3]{DietzelMenchonVendramin}, Dietzel, Menchón, and Vendramin have shown the following lemma for linear L-algebras.

\begin{lem}
\label{linear is KL}
Let $X$ be a linear algebra.  
For any $x, y, z \in X$ with $x \ge y > z$, one has
\[
    x \cdot y \;>\; x \cdot z .
\]

Moreover, every linear L-algebra is also a $KL$-algebra, i.e. $x\cdot y\geq y$ for all $x,y\in X$.
\end{lem}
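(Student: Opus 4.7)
The plan is to derive both statements directly from the cycloid identity~\eqref{Cycle Sets eq} together with linearity. For the first claim, I would substitute $(x,y,z)$ into the cycloid equation and use $y\le x$ (so $y\cdot x=1$) to obtain
\[
(x\cdot y)\cdot(x\cdot z)=(y\cdot x)\cdot(y\cdot z)=1\cdot(y\cdot z)=y\cdot z.
\]
Because $y>z$, antisymmetry forces $y\cdot z\neq 1$, hence $(x\cdot y)\cdot(x\cdot z)\neq 1$, ruling out $x\cdot y\le x\cdot z$. Linearity then gives $x\cdot z\le x\cdot y$, and equality is impossible (it would make $(x\cdot y)\cdot(x\cdot z)=1$), so the inequality is strict.

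For the KL property $x\cdot y\ge y$, the plan is to argue by contradiction. If $x\le y$ then $x\cdot y=1\ge y$ at once, so by linearity we may assume $x>y$ and suppose for contradiction that $x\cdot y<y$. Setting $z_0=y$ and $z_{n+1}=x\cdot z_n$, an induction applying the first part of the lemma to the triple $(x,z_{n-1},z_n)$ (which satisfies $x\ge z_{n-1}>z_n$ by the inductive hypothesis) yields $z_n=x\cdot z_{n-1}>x\cdot z_n=z_{n+1}$. Thus we obtain a strictly descending sequence
\[
y=z_0>z_1>z_2>\cdots
\]
in $X$.

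The step I expect to be the main obstacle is closing off this chain argument: in an arbitrary linearly ordered L-algebra a strictly descending chain need not terminate, so the contradiction genuinely relies on the finiteness hypothesis that governs the study of linear L-algebras in~\cite{DietzelMenchonVendramin}. Under that standing assumption the chain is forced to stop, which produces the required contradiction and completes the proof.
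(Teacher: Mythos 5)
Your argument is correct, and there is nothing in the paper to compare it against: the lemma is quoted verbatim from \cite[Lemma~4.3]{DietzelMenchonVendramin} and no proof is reproduced here. Your first part (cycloid equation plus $y\cdot x=1$ gives $(x\cdot y)\cdot(x\cdot z)=y\cdot z\neq 1$, then totality and antisymmetry force strict inequality) is the natural argument, and your descending-chain proof of the KL property is the standard one. The finiteness caveat you raise is genuine --- the chain argument needs the descending chain condition --- but it is harmless in context, since the paper applies the lemma only to finite linear L-algebras $\{x_0>\dots>x_{n-1}\}$; it would be worth stating the finiteness hypothesis explicitly rather than leaving it as a ``standing assumption.''
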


\begin{pro}\label{Lem: invariant under ideals}
Let $X$ be a linear L-algebra with 
\[
X = \{x_0 > x_1 > \dots > x_{n-1}\},
\]
and suppose that $x_{i+1}$ is an invariant element of $X$.  
Let
\[
I = \uset x_i := \{x_j \mid j \le i\}.
\]
Then 
\[
x \cdot y = y \qquad \text{for all } x \in I,\; y \in X \setminus I.
\]
\end{pro}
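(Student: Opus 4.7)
The plan is to combine the strict monotonicity from \cref{linear is KL} with the cycloid equation to squeeze $x\cdot y$ between $y$ and $x_{i+1}$, and then force equality. Fix $x\in I$ and $y\in X\setminus I$, so $x\ge x_i>x_{i+1}\ge y$. The case $y=x_{i+1}$ is immediate: invariance of $x_{i+1}$ together with $x>x_{i+1}$ gives $x\cdot x_{i+1}=x_{i+1}=y$. So the interesting case is $y<x_{i+1}$, and I would handle it as follows.

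First I would collect two easy equalities. Since $x_{i+1}\le x$, the order characterization yields $x_{i+1}\cdot x=1$, and since $x>x_{i+1}$, invariance of $x_{i+1}$ gives $x\cdot x_{i+1}=x_{i+1}$. Feeding these into the cycloid identity
\[
(x\cdot x_{i+1})\cdot (x\cdot y)=(x_{i+1}\cdot x)\cdot (x_{i+1}\cdot y)
\]
collapses it to
\[
x_{i+1}\cdot (x\cdot y)=x_{i+1}\cdot y.
\]

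Next I would locate $x\cdot y$ in the chain. By \cref{linear is KL}, $X$ is a KL-algebra, so $x\cdot y\ge y$. Applying the strict monotonicity of \cref{linear is KL} to $x\ge x_{i+1}>y$ gives $x\cdot x_{i+1}>x\cdot y$, i.e.\ $x_{i+1}>x\cdot y$. Thus $y\le x\cdot y<x_{i+1}$. Suppose toward a contradiction that $x\cdot y>y$; then $x_{i+1}\ge x\cdot y>y$, and another application of the strict monotonicity from \cref{linear is KL} yields
\[
x_{i+1}\cdot (x\cdot y)>x_{i+1}\cdot y,
\]
contradicting the cycloid-derived equality above. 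Hence $x\cdot y=y$.

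The only subtlety I anticipate is being careful about which instance of \cref{linear is KL} applies in which step — in particular, making sure that the elements appearing in the strict-monotonicity hypothesis $x\ge y>z$ really do lie in the required order — but once the equalities $x_{i+1}\cdot x=1$ and $x\cdot x_{i+1}=x_{i+1}$ are in hand, the cycloid equation does essentially all the work. No case analysis beyond separating $y=x_{i+1}$ from $y<x_{i+1}$ should be needed.
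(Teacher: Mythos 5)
Your proof is correct, but it takes a genuinely different route from the paper's. The paper argues by induction down the chain: starting from the invariance of $x_{i+1}$, it shows that each $x_k$ with $k\ge i+1$ is fixed by every $\sigma_x$ with $x\in I$, using the strict monotonicity of \cref{linear is KL} together with the fact that $x_{k+1}$ is the immediate successor of $x_k$ in the chain (so $x_k > x\cdot x_{k+1}\ge x_{k+1}$ forces $x\cdot x_{k+1}=x_{k+1}$). You instead treat an arbitrary $y<x_{i+1}$ in one step: the equalities $x_{i+1}\cdot x=1$ and $x\cdot x_{i+1}=x_{i+1}$ collapse the cycloid identity to $x_{i+1}\cdot(x\cdot y)=x_{i+1}\cdot y$, and two applications of the strict monotonicity in \cref{linear is KL} (first to place $x\cdot y$ in $[y,x_{i+1})$, then to rule out $x\cdot y>y$) finish the argument. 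Both uses of the lemma are legitimate, since $x\ge x_{i+1}>y$ and $x_{i+1}\ge x\cdot y>y$ hold in the respective steps. What your approach buys is the elimination of the induction and of any appeal to the covering relation between consecutive chain elements; what the paper's approach buys is that it only ever invokes strict monotonicity and never needs the cycloid equation explicitly, making it a one-line induction. Either proof is acceptable.
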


\begin{proof}
We show that $x_{i+1}, \dots, x_{n-1}$ are invariant under the action of $I$.  
Proceed by induction.  Since $x_{i+1}$ is invariant, the base case holds.  
Assume that $x_k$ is invariant under $I$ for some $k > i+1$.  
Then by \cref{linear is KL},
\[
x_k = x \cdot x_k > x \cdot x_{k+1} \ge x_{k+1}
\]
for all $x \in I$.  
Thus $x \cdot x_{k+1} = x_{k+1}$ for all $x \in I$, proving the induction step.
\end{proof}

Using this result, we can give a characterization of ideals and prime ideals of a linear L-algebra.

\begin{thm}\label{Thm: linearL_ideals}
Let $X$ be a linear L-algebra with 
\[
X = \{x_0 > x_1 > \dots > x_{n-1}\},
\]
and let $I \subseteq X$.  
Then $I$ is an ideal of $X$ if and only if
\[
I = \uset x_i := \{x_j \mid j \le i\}
\]
for some $i \in \{0,\dots,n-1\}$, and moreover either $i = n-1$ or $x_{i+1}$ is an invariant element.
\end{thm}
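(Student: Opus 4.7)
The plan is to prove both implications by exploiting the linear order together with two facts: from \cref{linear is KL}, linear L-algebras are KL-algebras (so by \cref{rem: ideals of KL and CKL-alg} only axioms (I1) and (I3) need verification); and \cref{Lem: invariant under ideals} describes the multiplication action of an upset by an invariant threshold.

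For the forward direction, given a nonempty ideal $I$, I would first observe that $1=x_{0}\in I$: picking any $x\in I$, axiom (I2) with $y=x$ gives $x\cdot x=1\in I$. Next, if $x\in I$ and $y\geq x$, then $x\cdot y=1\in I$, so (I1) forces $y\in I$; since $X$ is linear, this shows $I=\uset x_{i}=\{x_{0},\dots,x_{i}\}$, where $x_{i}$ is the least element of $I$. To see that $x_{i+1}$ must be invariant when $i<n-1$, pick any $y>x_{i+1}$, so $y\in I$. By \cref{linear is KL}, $y\cdot x_{i+1}\geq x_{i+1}$; if this inequality were strict, $y\cdot x_{i+1}$ would lie in $\{x_{0},\dots,x_{i}\}=I$, and applying (I1) to $y\in I$ and $y\cdot x_{i+1}\in I$ would force $x_{i+1}\in I$, contradicting the minimality of $x_{i}$.

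For the converse, suppose $I=\uset x_{i}$ with either $i=n-1$ or $x_{i+1}$ invariant. The case $I=X$ is immediate. Otherwise, \cref{Lem: invariant under ideals} yields $x\cdot y=y$ for every $x\in I$ and $y\notin I$, which reduces the remaining verifications to routine checks. For (I1): if $x\in I$ and $x\cdot y\in I$, then $y\notin I$ would force $x\cdot y=y\notin I$, a contradiction, so $y\in I$. For (I3): if $y\in I$, then $(x\cdot y)\cdot y\geq y$ by \cref{linear is KL}, and since $I$ is an upset of $X$ in the linear order, this element lies in $I$; if $y\notin I$, then $x\cdot y=y$, so $(x\cdot y)\cdot y=y\cdot y=1\in I$.

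The main subtle point is the invariance argument in the forward direction: one must combine the weak inequality $y\cdot x_{i+1}\geq x_{i+1}$ supplied by \cref{linear is KL} with (I1) to convert a hypothetical strict inequality into a contradiction. All remaining steps are essentially bookkeeping once \cref{rem: ideals of KL and CKL-alg} and \cref{Lem: invariant under ideals} are available.
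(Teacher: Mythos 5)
Your proposal is correct and follows essentially the same route as the paper: the forward direction establishes $1\in I$ and upward closure to get $I=\uset x_i$, then combines $y\cdot x_{i+1}\geq x_{i+1}$ from \cref{linear is KL} with (I1) to force invariance of $x_{i+1}$; the converse reduces to (I1) and (I3) via \cref{rem: ideals of KL and CKL-alg} and uses \cref{Lem: invariant under ideals} exactly as the paper does. No gaps.
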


\begin{proof}
Assume first that $I$ is an ideal of $X$, and let $x_i$ be the minimal element of $I$.  
By (I1), $I$ is upward closed, hence $I = \uset x_i$.

Suppose now that $i < n-1$ and choose any $y > x_{i+1}$.  
Then $y \in I$ while $x_{i+1} \notin I$, and by (I1) we must have $y \cdot x_{i+1} \notin I$.  
Thus $y \cdot x_{i+1} \le x_{i+1}$.  
On the other hand, by \cref{linear is KL},
\[
x_{i+1} \le y \cdot x_{i+1}.
\]
Hence $y \cdot x_{i+1} = x_{i+1}$ for all $y > x_{i+1}$, showing that $x_{i+1}$ is invariant.

Conversely, suppose that $I = \uset x_i$ and that $x_{i+1}$ is invariant.  
By \cref{rem: ideals of KL and CKL-alg}, it suffices to verify (I1) and (I3).
\begin{itemize}
    \item[(I1)]Let $x \in I$ and suppose $x \cdot y \in I$.  
By \cref{Lem: invariant under ideals}, if $y \notin I$ then $x \cdot y = y \notin I$, a contradiction.  
Therefore $y \in I$.
    \item[(I3)]If $x \in I$ and $y \notin I$, then by \cref{Lem: invariant under ideals},
\[
(x \cdot y)\cdot y = y \cdot y = 1 \in I.
\]

If $x \in I$ and $y \in I$, then since $X$ is a KL-algebra,
\[
(x \cdot y)\cdot y \ge y \ge x_i,
\]
hence $(x \cdot y)\cdot y \in I$.
\end{itemize}
Therefore, $I$ is an ideal of $X$.
\end{proof}

\begin{cor}\label{cor: prime ideals of linear algebras}
    Let $X=\{x_0>x_1>\dots>x_{n-1}\}$ be a linear L-algebra, then 
    \[
    \mathscr{I}(X)=\{\uset x_i\mid i=n-1\text{ or } x_{i+1} \text{ is invariant}\}
    \]
    and 
    $\Spec(X)=\mathscr{I}(X)\setminus\{X\}$.
\end{cor}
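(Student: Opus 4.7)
The first equality in the corollary is just a restatement of \cref{Thm: linearL_ideals}, so the only real content is the claim that every proper ideal is prime. My plan is to exploit the fact that, by \cref{Thm: linearL_ideals}, every ideal of $X$ is of the form $\uset x_i$ and therefore the ideals of $X$ are totally ordered by inclusion.

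Fix a proper ideal $P$, and take any ideal $I$ with $I\not\subseteq P$. By the chain property of $\mathscr{I}(X)$, we get $P\subsetneq I$; to prove primality of $P$ it suffices to check that $I\cdot P\subseteq P$. I would apply the chain property a second time, now to the pair $I$ and $I\cdot P$. If $I\subseteq I\cdot P$, then, using the general identity $(I\cdot P)\cap I\subseteq P$ recorded in \cref{Section: Preliminaries}, we would obtain
\[
I \;=\; I\cap(I\cdot P)\;\subseteq\; P,
\]
contradicting $P\subsetneq I$. Hence $I\cdot P\subseteq I$, and invoking the same general identity once more gives
\[
I\cdot P \;=\; (I\cdot P)\cap I \;\subseteq\; P,
\]
as desired.

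This confirms that every proper ideal satisfies the defining property of a prime element of $\mathscr{I}(X)$, so $\Spec(X)=\mathscr{I}(X)\setminus\{X\}$. There is no genuine obstacle here: the entire argument rests on the linear (chain) structure of $\mathscr{I}(X)$ furnished by \cref{Thm: linearL_ideals}, together with the universal property of $I\cdot P$, and no case analysis involving invariant elements or explicit indices is needed beyond what is already encoded in the theorem.
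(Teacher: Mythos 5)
Your proof is correct, but it takes a genuinely different route from the paper's. The paper argues by explicit computation: writing $P=\uset x_k$ and $I=\uset x_i$, it unwinds the definition $I\cdot P=\{x_j\mid \langle x_j\rangle\cap I\subseteq P\}$ using the upset description of ideals and finds that $I\cdot P=X$ when $i\leq k$ (i.e.\ when $I\subseteq P$) and $I\cdot P=P$ when $i>k$, from which primality is immediate. You instead argue abstractly: by \cref{Thm: linearL_ideals} the ideals of $X$ form a chain, so the ideals $I$ and $I\cdot P$ are comparable; the identity $(I\cdot P)\cap I\subseteq P$ recorded in \cref{Section: Preliminaries} rules out $I\subseteq I\cdot P$ (that inclusion would force $I=I\cap(I\cdot P)\subseteq P$, contradicting $I\not\subseteq P$) and converts the remaining case $I\cdot P\subseteq I$ into $I\cdot P=(I\cdot P)\cap I\subseteq P$. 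Your argument is more general --- it shows that in any L-algebra whose ideal lattice is totally ordered every proper ideal is prime, with no reference to invariant elements or explicit indices --- whereas the paper's computation has the minor added benefit of identifying $I\cdot P$ exactly (as $X$ or $P$). Both are complete proofs of the stated corollary.
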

\begin{proof}
    The first claim is exactly what is proven in \cref{Thm: linearL_ideals}.
    
    Let $P$ be a proper ideal of $X$.
    We want to prove that it is prime.
    By the previous property, $P=\uset x_k$ for some $k\in\{0,\dots, n-2\}$ with $x_{k+1}$ invariant.
    Consider now any other ideal $I=\uset x_i$, then 
    \begin{align*}
    I\cdot P
    &=\{x_j\mid \langle x_j\rangle\cap I\subseteq P\}
    =\{x_j\mid \langle \uset x_j\rangle\cap \uset x_i\subseteq \uset x_k\}
    =\{x_j\mid \uset x_{\min(j,i)}\subseteq \uset x_k\}\\
    &=\{x_j\mid \min(j,i)\leq k\}=\begin{cases}
        X&\text{ if }i\leq k\\
        \uset x_k &\text{ if } i>k
    \end{cases}  
    =\begin{cases}
        X&\text{ if }i\leq k\\
        P &\text{ if } i>k
    \end{cases}  
    \end{align*}
    
    Therefore, $i\leq k$ and so $I\subseteq P$ or $i>k$ and $I\cdot P=P$.
    Hence, $P$ is a prime ideal.
\end{proof}

We now introduce a family of L-algebras $\{\mathbf{A}_n\}_{n \geq 1}$ and use the previous theorem to establish that each of these L-algebras is simple.

\begin{pro}\label{A_n is CKL}
    Let $n>1$ and $\mathbf{A}_n$ be the set $\{x_0,x_1,\dots, x_{n-1}\}$ 
    with multiplication defined as 
    $x_i\cdot x_j=x_{\max(j-i, 0)}$ for all $i,j\in \{0,\dots, n-1\}$.
    Then $\mathbf{A}_n$ is a simple linear CKL-algebra
    with $x_0>x_1>\dots>x_{n-1}$.
\end{pro}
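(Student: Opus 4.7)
The plan is to verify the axioms of an L-algebra, then the CKL identity, identify the induced partial order as the claimed linear one, and finally apply \cref{Thm: linearL_ideals} to rule out any nontrivial ideal. Throughout, the computations are governed by the closed formula $x_i\cdot x_j=x_{\max(j-i,0)}$, so everything reduces to elementary arithmetic on the indices.

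First I would check that $x_0$ is the logical unit by observing that $x_0\cdot x_j=x_{\max(j,0)}=x_j$, that $x_i\cdot x_0=x_{\max(-i,0)}=x_0$, and that $x_i\cdot x_i=x_{\max(0,0)}=x_0$. Antisymmetry is immediate: if $x_i\cdot x_j=x_j\cdot x_i=x_0$, then $\max(j-i,0)=\max(i-j,0)=0$, forcing $i=j$. For the cycloid equation I would abbreviate $a=i$, $b=j$, $c=k$ and show that both sides equal $x_{(c-\max(a,b))^{+}}$ by the identity
\[
\bigl((c-a)^{+}-(b-a)^{+}\bigr)^{+}=\bigl(c-\max(a,b)\bigr)^{+},
\]
which follows from a short case split on whether $a\leq b$ or $b<a$ and on the position of $c$. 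The symmetry of the right-hand side in $a,b$ then gives the equality with the mirror expression. For CKL I would simply note that
\[
x_i\cdot(x_j\cdot x_k)=x_{\max(\max(k-j,0)-i,\,0)}=x_{\max(k-i-j,0)},
\]
because when $k\geq j$ the inner $\max$ equals $k-j$ and when $k<j$ both sides vanish; the right-hand side is manifestly symmetric in $i,j$, yielding $x_i\cdot(x_j\cdot x_k)=x_j\cdot(x_i\cdot x_k)$.

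Next, the induced order satisfies $x_j\leq x_i\iff x_j\cdot x_i=x_0\iff\max(i-j,0)=0\iff i\leq j$, so $x_0>x_1>\cdots>x_{n-1}$ is the total order on $\mathbf{A}_n$, confirming linearity. With this in place, \cref{Thm: linearL_ideals} reduces the ideal computation to finding those $i\in\{0,\dots,n-1\}$ for which either $i=n-1$ or $x_{i+1}$ is invariant. For $j\leq i$ one has $x_j\cdot x_{i+1}=x_{i+1-j}$, and this equals $x_{i+1}$ precisely when $j=0$; therefore $x_{i+1}$ is invariant only if the set $\{j\mid 0\leq j\leq i\}$ equals $\{0\}$, i.e. if $i=0$. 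Consequently the only ideals of $\mathbf{A}_n$ are $\uset x_0=\{1\}$ and $\uset x_{n-1}=\mathbf{A}_n$, which means that $\mathbf{A}_n$ is simple.

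The bulk of the work is mechanical; the only step that requires a little care is the cycloid verification, since it involves nested $\max$'s and a case split. I would handle it by the unified formula $(c-\max(a,b))^{+}$ identified above, which makes the symmetry transparent in a single line rather than a long case analysis.
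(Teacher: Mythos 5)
Your proposal is correct and follows essentially the same route as the paper: direct index arithmetic for the L-algebra and CKL axioms (your unified formula $x_{(c-\max(a,b))^{+}}$ for the cycloid check is just a cleaner packaging of the same computation), followed by an appeal to \cref{Thm: linearL_ideals} to show that $x_{i+1}$ is invariant only for $i=0$, hence the only ideals are $\{1\}$ and $\mathbf{A}_n$.
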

\begin{proof}
    It is easy to check that
    \[
    \max(\max(k-i, 0)-\max(j-i, 0))=\max(\max(k-j, 0)-\max(i-j, 0))
    \]    
    for every $i,j,k\geq 0$, hence $(x_i\cdot x_j)\cdot (x_i\cdot x_k)=(x_j\cdot x_i)\cdot (x_j\cdot x_k)$ for every $x_i,x_j,x_k\in \mathbf{A}_n$.
    Moreover 
    $x_0\cdot x_i=x_i$ and
    $x_i\cdot x_0 =x_i\cdot x_i=x_0$ for all $x_i\in \mathbf{A}_n$
    and if $x_i\cdot x_j=x_j\cdot x_i=x_0$, then $i\leq j\leq i$, i.e. $x_i=x_j$.
    Therefore, $\mathbf{A}_n$ is an L-algebra with $1=x_0$ and $x_0<x_1<\cdots<x_{n-1}$.

    Moreover, 
    \[
    x_i\cdot(x_j\cdot x_k)=\begin{cases}
        x_{k-j-i} &\text{ if } k>i+j\\
        x_0 &\text{ otherwise }
    \end{cases}=x_j\cdot(x_i\cdot x_k),
    \text{ for every }x_i,x_j,x_k\in \mathbf{A}_n,\]
    so $\mathbf{A}_n$ is a CKL-algebra.

   Finally, to prove that it is simple, using \cref{Thm: linearL_ideals}, it is enough to show that there are no invariant elements apart from $x_0$ and $x_1$.
    Note that $x_{i-1}> x_i$ for every $i>1$ and $x_{i-1}\cdot x_i=x_1\neq x_i$, i.e. $x_i$ is not invariant for every $i>1$.
\end{proof}

\begin{lem}
\label{subalgebras of linear simple}
Let $n>1$ and
$X=\{x_0>x_1>\cdots x_{n-1}>x_n\}$ be a linear L-algebra.
Then $Y=X\setminus\{x_n\}$ is an L-subalgebra of $X$.
Moreover, $I$ is an ideal of $Y$ for every proper ideal $I\subset X$.
\end{lem}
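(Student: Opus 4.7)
The plan is to reduce both claims to \cref{Thm: linearL_ideals} together with the fact, established in \cref{linear is KL}, that every linear L-algebra is a KL-algebra.

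For the first claim, I would exploit the KL inequality $x \cdot y \ge y$. If $x_i, x_j \in Y$, i.e.\ $j \le n-1$, then $x_i \cdot x_j \ge x_j > x_n$, so $x_i \cdot x_j \ne x_n$ and hence $x_i \cdot x_j \in Y$. Since $1 = x_0 \in Y$ as well, this shows that $Y$ is closed under the L-algebra operation and is therefore an L-subalgebra of $X$. In particular, $Y$ inherits the linear order $x_0 > x_1 > \cdots > x_{n-1}$ and is itself a linear L-algebra, so \cref{Thm: linearL_ideals} applies to $Y$.

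For the second claim, take any proper ideal $I$ of $X$. By \cref{Thm: linearL_ideals}, $I = \uset x_i$ for some $i \le n-1$, and because $I \ne X$ the characterization forces $x_{i+1}$ to be an invariant element of $X$. If $i = n-1$, then $I = Y$, which is trivially an ideal of itself. If $i < n-1$, the crucial observation is that the invariance of $x_{i+1}$ transfers from $X$ to $Y$: the set of elements of $Y$ strictly above $x_{i+1}$ is $\{x_0, \dots, x_i\}$, which coincides with the corresponding set inside $X$, so the identity $y \cdot x_{i+1} = x_{i+1}$ remains valid for every $y \in Y$ with $y > x_{i+1}$. Applying \cref{Thm: linearL_ideals} to $Y$ then yields that $\uset x_i$ is an ideal of $Y$.

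The only genuine concern is the bookkeeping of index ranges when invoking the ideal characterization on two algebras of different sizes; but since invariance is quantified only over the upward-closed set strictly above $x_{i+1}$, and this set is unchanged by discarding the minimum element $x_n$, the transfer is automatic and no further structural work is required.
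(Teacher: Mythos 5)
Your proof is correct and follows essentially the same route as the paper: write a proper ideal as $\uset x_i$ via \cref{Thm: linearL_ideals}, observe that invariance of $x_{i+1}$ transfers from $X$ to $Y$ because the set of elements strictly above $x_{i+1}$ is unchanged, and apply the characterization again in $Y$. Your explicit verification that $Y$ is closed under the operation (via the KL inequality $x\cdot y\ge y$ from \cref{linear is KL}) is a welcome addition, since the paper's proof leaves that part implicit.
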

\begin{proof}
    Let $I$ be a proper ideal of $X$, then $x_n\notin I$ and, more precisely, by \cref{Thm: linearL_ideals}, $I=\uset x_i$ for some 
    $i<n$ and $x_{i+1}$ is invariant in $X$.
    
    If $i=n-1$, then $I=Y$, which is an ideal of $Y$.

    Otherwise, $i<n-1$ and $x_{i+1}\in Y$ and $x_{i+1}$ is invariant also in $Y$. Therefore, by \cref{Thm: linearL_ideals}, $I$ is an ideal of $Y$.
\end{proof} 

The previous lemma allows us to use the inductive construction of linear algebras proved in \cite{DietzelMenchonVendramin}.
More precisely, \cite[Proposition 4.4]{DietzelMenchonVendramin} is the following.

\begin{pro}
\label{extension_linearLalg}
    Let $X=\{x_0>x_1>\cdots> x_{n-1}\}$ be a linear L-algebra
    and let $p\in X$ be the smallest invariant element of $X$.
    Consider now the poset $$L_{n+1}=\{x_0>x_1>\cdots x_{n-1}>x_n\}$$ and take $c\in L_{n+1}$ such that $p\cdot x_{n-1}>c$.
    Then there exists a unique L-algebra structure $X'$ on $L_{n+1}$ such that $X$ is an L-subalgebra of $X'$ and such that $p\cdot x_n=c$.
\end{pro}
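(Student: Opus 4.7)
The plan is to use the cycloid equation, together with the known structure of $X$, to pin down every missing product involving the new element $x_n$, thereby proving uniqueness, and then to verify that the resulting assignment really does define an L-algebra.

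First, I would observe that since $x_n$ is the minimum of $L_{n+1}$, the unit axioms and the definition of $\leq$ force $x_n\cdot x_j=1$ for every $j\in\{0,\dots,n\}$ and $x_0\cdot x_i=x_i\cdot x_0=x_i\cdot x_i=1$. So the only new data are the values $x_i\cdot x_n$ for $1\le i\le n-1$, and each must satisfy $x_i\cdot x_n\ge x_n$ by \cref{linear is KL}. Writing $p=x_k$, the cycloid equation applied to the triple $(x_i,p,x_n)$ reads
\[
(x_i\cdot p)\cdot(x_i\cdot x_n)=(p\cdot x_i)\cdot c.
\]
For $i<k$, the invariance of $p$ in $X$ gives $x_i\cdot p=p$, while $p<x_i$ gives $p\cdot x_i=1$, so the identity reduces to $p\cdot(x_i\cdot x_n)=c=p\cdot x_n$; since \cref{linear is KL} shows that $\sigma_p$ is injective on $\dset p$ in any linear L-algebra, we must have $x_i\cdot x_n=x_n$. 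For $i\ge k$ we have $x_i\le p$, so $x_i\cdot p=1$, and the identity collapses to $x_i\cdot x_n=(p\cdot x_i)\cdot c$. This settles uniqueness.

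For existence, I would define $X'$ on $L_{n+1}$ by extending the multiplication of $X$ according to the formulas just derived and then verify the L-algebra axioms. The unit axiom holds by construction. The antisymmetry axiom requires $x_i\cdot x_n\ne 1$ for $i<n$: this is clear for $i<k$ since $x_n\ne 1$, while for $k\le i\le n-1$ the hypothesis $p\cdot x_{n-1}>c$ together with \cref{linear is KL} gives $p\cdot x_i\ge p\cdot x_{n-1}>c$, so $(p\cdot x_i)\cdot c\ne 1$. Finally, the cycloid equation must be checked for every triple $(a,b,d)\in X'^{\,3}$; since $x_n\cdot{-}=1$, the variable $x_n$ can only appear as $b$ or $d$, and the subcase in which $x_n$ does not appear holds because it already holds in $X$.

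The main technical obstacle will be the case-by-case verification of the cycloid equation in the presence of $x_n$. Each subcase splits according to the positions of the remaining variables relative to $p=x_k$, and the strategy is to reduce both sides to an identity already holding in $X$ by repeatedly invoking the invariance of $p$, the relation $x_j\cdot x_i=1$ whenever $j\ge i$, \cref{Lem: invariant under ideals}, and the injectivity of $\sigma_p$ on $\dset p$. The delicate instance is expected to be $d=x_n$ with both $a,b\le p$, where each side expands via $x_i\cdot x_n=(p\cdot x_i)\cdot c$ and the computation must be matched against the cycloid equation for the triple $(a,b,c)$ inside $X$.
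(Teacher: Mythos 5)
First, a remark on the comparison you asked for: the paper does not prove \cref{extension_linearLalg} at all --- it is imported verbatim as Proposition~4.4 of Dietzel--Mench\'on--Vendramin --- so there is no internal proof to measure your argument against, and it has to stand on its own.

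Your uniqueness argument has a genuine gap in the case $c=x_n$, which is not a boundary case you may discard: it is exactly the case needed to produce extensions in which $x_n$ is invariant (for instance $\LH_{n+1}$ out of $\LH_n$). For $x_i\le p$ your cycloid instance gives $x_i\cdot x_n=(p\cdot x_i)\cdot c$; when $c\in X$ the right-hand side is computed inside $X$ and the value is determined, but when $c=x_n$ it equals $(p\cdot x_i)\cdot x_n$, another unknown of the same kind, so the identity is a constraint among the unknowns rather than a determination of them. Worse, if $\sigma_p$ had a fixed point $x_i<p$, the instance $(x_i,p,x_n)$ would collapse to the tautology $x_i\cdot x_n=x_i\cdot x_n$ and give no information. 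To close this you need the auxiliary lemma that the smallest invariant element satisfies $p\cdot x_i>x_i$ strictly for every $x_i<p$: taking the fixed point of $\sigma_p$ below $p$ of smallest index, a short computation with the cycloid equation and the injectivity of $\sigma_p$ on $\dset p$ shows that $y\cdot x_i=x_i$ for every $y>x_i$, so $x_i$ would be invariant, contradicting the minimality of $p$. Granting this, the recursion $x_i\cdot x_n=(p\cdot x_i)\cdot x_n$ strictly decreases the index and terminates at an index $\le k$, where the value is known; only then is uniqueness settled.

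The larger problem is that the existence half is a plan rather than a proof: you explicitly defer ``the case-by-case verification of the cycloid equation,'' which is where essentially all of the content of this proposition lives, and it is not a routine formality --- each of the cases $y=x_n$ and $z=x_n$ splits further according to the positions of the remaining variables relative to $p$ and to whether $c=x_n$, and the subcase you yourself flag as delicate is exactly the one where the argument could fail. Note also that the justification ``$x_n\cdot{-}=1$'' does not make the case $x=x_n$ vacuous: there the left-hand side is $1$ but the right-hand side is $(y\cdot x_n)\cdot(y\cdot z)$, so one still has to prove $y\cdot x_n\le y\cdot z$ for all $y,z$; by the symmetry of the cycloid equation this is the same monotonicity statement as the case $y=x_n$, but it must actually be verified for the newly defined products. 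As written, the proposal establishes most of uniqueness and none of existence.
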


\begin{thm}
\label{simple linear}
    Let $n>1$ and $X=\{x_0>x_1>\cdots >x_{n-1}\}$ be a linear L-algebra.
    If $X$ is simple, then $X$ is isomorphic to $\mathbf{A}_n$.
\end{thm}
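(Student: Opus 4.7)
My plan is to proceed by induction on $n$. The base case $n=2$ is immediate, as the L-algebra axioms force a unique operation on a linearly ordered two-element set, which is precisely $\mathbf{A}_2$.

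For the inductive step, I assume the statement holds for simple linear L-algebras of size $n \ge 2$, and let $X = \{x_0 > x_1 > \cdots > x_n\}$ be a simple linear L-algebra of size $n+1$. By \cref{subalgebras of linear simple}, $Y := X \setminus \{x_n\}$ is an L-subalgebra of $X$, and I would first verify that $Y$ is itself simple. Indeed, by \cref{Thm: linearL_ideals}, any non-trivial proper ideal of $Y$ must have the form $\uset x_i$ with $1 \le i \le n-2$ and $x_{i+1}$ invariant in $Y$; but every $y > x_{i+1}$ in $X$ automatically lies in $Y$ (since $x_n < x_{i+1}$), so invariance in $Y$ coincides with invariance in $X$ for such an element. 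Consequently $\uset x_i$ would also be a non-trivial proper ideal of $X$, contradicting simplicity. By the inductive hypothesis, $Y \cong \mathbf{A}_n$.

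Under this identification, the only invariant elements of $Y$ are $x_0$ and $x_1$, so $p := x_1$ is the smallest invariant element of $Y$ and $p \cdot x_{n-1} = x_{n-2}$. By \cref{extension_linearLalg}, the L-algebra structure of $X$ is uniquely determined by the single value $c := x_1 \cdot x_n$, subject to $c < x_{n-2}$; hence $c \in \{x_{n-1}, x_n\}$. If $c = x_{n-1}$, then $X$ and $\mathbf{A}_{n+1}$ are both extensions of $\mathbf{A}_n$ by the same value of $c$, and the uniqueness part of \cref{extension_linearLalg} yields $X \cong \mathbf{A}_{n+1}$.

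The hard part will be excluding $c = x_n$. Assuming $x_1 \cdot x_n = x_n$, I plan to apply the cycloid equation \eqref{Cycle Sets eq} to the triple $(x_1, x_k, x_n)$ for each $2 \le k \le n-1$; using the identification $Y \cong \mathbf{A}_n$ to compute $x_1 \cdot x_k = x_{k-1}$ and $x_k \cdot x_1 = x_0$, the equation collapses to $x_{k-1} \cdot x_n = x_k \cdot x_n$. Combined with the starting identity $x_1 \cdot x_n = x_n$, a straightforward induction on $k$ then gives $x_k \cdot x_n = x_n$ for every $0 \le k \le n-1$, i.e.\ $x_n$ is invariant in $X$. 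By \cref{Thm: linearL_ideals} this would make $Y = \uset x_{n-1}$ a non-trivial proper ideal of $X$, contradicting simplicity. Hence $c = x_{n-1}$, and the induction closes.
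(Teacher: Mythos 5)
Your proof is correct and follows essentially the same route as the paper: strip off the minimal element, check that $Y\cong\mathbf{A}_n$ by the inductive hypothesis, reduce the whole structure to the single value $x_1\cdot x_n$ via \cref{extension_linearLalg}, and rule out $x_1\cdot x_n=x_n$ by showing that $x_n$ would then be invariant, contradicting simplicity. The only differences are cosmetic: you verify the simplicity of $Y$ explicitly and derive the invariance of $x_n$ from the cycloid equation rather than from the monotonicity statement of \cref{linear is KL}, both of which are fine and, if anything, more detailed than the paper's own write-up.
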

\begin{proof}
    We prove the thesis by induction.
    For $n=2$, the claim is trivial.

    Let $n>1$ and $X=\{x_0>x_1>\cdots x_{n-1}>x_n\}$ be a linear simple L-algebra. Then, by \cref{subalgebras of linear simple}, $Y=\{x_0>x_1>\dots> x_{n-1}\}$ is a linear simple L-algebra too.
    Hence, by inductive hypothesis, $x_i\cdot x_j=x_{\max(j-i, 0)}$ for all $i,j\in \{0,\dots, n-1\}$.
    It remains to check that $x_i\cdot x_n=x_{n-i}$ for all $i\in \{0,\dots, n-1\}$.

    Notice that in $Y$ the smallest invariant element is $x_1$ and, since $x_n< x_1$, by \cref{linear is KL}, $x_1\cdot x_n<x_1\cdot x_{n-1}=x_{n-2}$.
    Moreover $x_1\cdot x_n$ cannot be $x_n$ otherwise, by \cref{linear is KL}, $x\cdot x_n=x_n$ for every $x\neq x_n$ i.e. $x_n$ is invariant, which is against the fact that $X$ is a linear simple L-algebra.
    Therefore $x_1\cdot x_n=x_{n-1}$ and, by \cref{extension_linearLalg}, there is a unique L-algebra structure on $X$ such that $Y$ is a L-subalgebra and $x_1\cdot x_n=x_{n-1}$, which is precisely $S_{n+1}$.
\end{proof}

In the remaining, we will extend \cref{simple linear} to a subclass of CKL-algebra, namely tail$^+$ CKL-algebras.

\begin{defn}\label{Def: tail+}
Let $X$ be an L-algebra, and let $z$ be a minimal element of $X$.  
The upset $\uset z$ of $z$
is called a \emph{tail} if 
it is a linear subset of $X$.

A finite L-algebra $X$ is called a \emph{tail$^+$ L-algebra} if
it has a tail or if it contains L-subalgebras
\[
Y \subseteq Y_0 \subseteq X
\]
such that:
\begin{enumerate}
    \item $Y$ has a tail;
    \item the set $Y_0 \setminus Y = \{z_0\}$ consists of a single element, which is the smallest element of $Y_0$;
    \item the complement $X \setminus Y$ is a linear poset.
\end{enumerate}

\end{defn}

In particular, any L-algebra $X$ whose Hasse diagram forms a directed tree 
is an L-algebra with $n$ tails, where $n$ denotes the number of leaves of the tree.

\begin{pro}\label{Lem: tail of minimal element in CKL is an ideal}
Let $X$ be a CKL-algebra with a minimal element $z \in X$. 
If the corresponding upset
\[
I := \uset z = \{\, x \in X \mid z \leq x \,\}
\]
is a tail. Then $I$
is an ideal of $X$.
\end{pro}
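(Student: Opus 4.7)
The plan is to invoke \cref{rem: ideals of KL and CKL-alg}: since $X$ is CKL, $I$ is an ideal precisely when $1 \in I$ and condition (I1) holds. The first is immediate from $z \leq 1$, so the real work is to show that whenever $x \in I$ and $x \cdot y \in I$, one also has $y \in I$.

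From $x \cdot y \in I$ I unpack $z \leq x \cdot y$, and the CKL identity $z \cdot (x \cdot y) = x \cdot (z \cdot y)$ yields $x \leq z \cdot y$. Combined with $z \leq x$, this gives $z \leq u$ for $u := z \cdot y$, so $u \in I$. The heart of the argument is then two instances of the cycloid equation \eqref{Cycle Sets eq}. With $(a,b,c) = (u, z, y)$, using $z \cdot u = 1$, one obtains
\[
(u \cdot z)\cdot(u \cdot y) \;=\; u.
\]
With $(a,b,c) = (u, y, z)$, together with $y \cdot u = z \cdot (y \cdot y) = 1$ (again by CKL applied to $u = z \cdot y$), one obtains
\[
(u \cdot y)\cdot(u \cdot z) \;=\; y \cdot z.
\]

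Next I check that both $u \cdot z$ and $u \cdot y$ sit inside $I$: the former since $u \cdot z \geq z$ by KL (recall every CKL-algebra is KL), and the latter because CKL gives $z \cdot (u \cdot y) = u \cdot (z \cdot y) = u \cdot u = 1$. Because $I$ is a tail, these two elements are comparable, and a clean case split closes the argument. If $u \cdot y \leq u \cdot z$, the first cycloid identity collapses to $y \cdot z = 1$, so $y \leq z$, and minimality of $z$ forces $y = z \in I$. If instead $u \cdot z \leq u \cdot y$, the second identity collapses to $z \cdot y = 1$, placing $y \geq z$ directly in $I$.

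The main obstacle is isolating the two correct cycloid instances and noticing that both $u \cdot y$ and $u \cdot z$ are trapped inside the chain $I$; once these observations are in hand, linearity of the tail together with the minimality of $z$ finishes both branches without any further algebraic manipulation.
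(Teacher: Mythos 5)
Your proof is correct, but it follows a genuinely different route from the paper's. The paper first establishes the contrapositive auxiliary claim that $z \cdot y \notin I$ whenever $y \notin I$: it picks a minimal common upper bound $x$ of $y$ and $z$, uses the cycloid equation to show $x\cdot y$ and $x\cdot z$ are incomparable, concludes $x\cdot y\notin I$ from linearity of the tail (since $x\cdot z\in I$), and then transfers this to $z\cdot y$ via the CKL identity; only afterwards does it verify (I1), reaching $z\cdot y\in I$ exactly as you do and then invoking the auxiliary claim. You instead stay entirely inside the tail: having placed $u = z\cdot y$ in $I$, you observe that $u\cdot z$ (by KL) and $u\cdot y$ (by CKL, since $u\cdot(z\cdot y)=1$) both lie in $I$, so linearity makes them comparable, and the two cycloid instances $(u\cdot z)\cdot(u\cdot y)=u$ and $(u\cdot y)\cdot(u\cdot z)=y\cdot z$ then force either $y\cdot z=1$ (whence $y=z$ by minimality of $z$) or $u=1$, i.e.\ $z\leq y$. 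This buys a cleaner argument: it avoids the choice of a minimal common upper bound of $y$ and $z$ and the somewhat delicate final step of the paper's first part (ruling out $z\cdot y\in I$ from $z\cdot y\in \uset y\setminus\uset x$), replacing it with a two-case comparison that uses the minimality of $z$ explicitly and transparently. One cosmetic slip: in your case analysis the labels are swapped --- the hypothesis $u\cdot y\leq u\cdot z$ feeds into your \emph{second} displayed identity to give $y\cdot z=1$, while $u\cdot z\leq u\cdot y$ feeds into the \emph{first}, yielding $u=1$ and hence $z\cdot y=1$; the mathematics in each branch is nevertheless sound.
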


\begin{proof}
Assume that $I$ is a proper subset of $X$. 

We first show that $z \cdot y \notin I$ for all $y \notin I$.  Let $y\notin I$.
There exists a minimal element $x$ such that $y < x$ and $z < x$. 
Then, we have
\begin{align*}
    y \cdot z 
    &= (y \cdot x) \cdot (y \cdot z) \\
    &= (x \cdot y) \cdot (x \cdot z).
\end{align*}
Since $y \nleq z$, it follows that $x \cdot y \nleq x \cdot z$.  
Similarly, since 
\[
z \cdot y = (x \cdot z) \cdot (x \cdot y),
\]
we also have $x \cdot z \nleq x \cdot y$.  Hence, since $x \cdot z \in I$ and $I$ is linear, it follows that $x \cdot y \notin I$.
Moreover, since $X$ is a CKL-algebra, we obtain
\[
z \cdot (x \cdot y) = x \cdot (z \cdot y).
\]
Since $z \nleq x \cdot y$, it follows that $x \nleq z \cdot y$.  
Note that $z \cdot y \in \uset y$. Thus, 
\[
z \cdot y \in \uset y \setminus \uset x,
\]
which implies $z \cdot y \notin I$.

Next, we show that each $I$ satisfies property~(I1).  
Suppose $x,\, x \cdot y \in I$.  
Since $X$ is a CKL-algebra, we have
\[
x \cdot (z \cdot y) = z \cdot (x \cdot y) = 1.
\]
Hence $z \leq x \leq z \cdot y$, which means $z \cdot y \in I$.  
From the first part of this proof, it follows that $y \in I$.  
Therefore, $I$ is an ideal of $X$.
\end{proof}

By \cref{Lem: tail of minimal element in CKL is an ideal}, we can directly obtain the following result.

  \begin{exa}
  Let $X$ be a set $\{1, x, y, z\}$ with the following multiplication table:
\[
\begin{array}{c|cccc}
    & x & y & z & 1 \\
    \hline
    x & 1 & y & x & 1 \\
    y & x & 1 & z & 1 \\
    z & 1 & y & 1 & 1 \\
    1 & x & y & z & 1 \\
\end{array}
\]
It can be verified that $X$ is a CKL-algebra with the partial order $1 > y$ and $1 > x > z$.  
By \cref{Lem: tail of minimal element in CKL is an ideal}, we obtain two ideals of $X$:
\[
I_1 = \{1, x, z\} \quad \text{and} \quad I_2 = \{1, y\}.
\]

  \end{exa}

\begin{exa}
Let $X = \{1, x_1, x_2, x_3, x_4, x_5, x_6\}$ be a set equipped with the following multiplication table:
\[
\begin{array}{c|ccccccc}
    & x_1 & x_2 & x_3 & x_4 & x_5 & x_6 & 1 \\
    \hline
    x_1 & 1 & x_2 & x_3 & x_4 & x_5 & x_6 & 1 \\
    x_2 & 1 & 1 & x_3 & x_4 & x_5 & x_6 & 1 \\
    x_3 & 1 & x_2 & 1 & x_4 & x_3 & x_4 & 1 \\
    x_4 & 1 & 1 & x_3 & 1 & x_5 & x_3 & 1 \\
    x_5 & 1 & x_2 & 1 & x_4 & 1 & x_4 & 1 \\
    x_6 & 1 & 1 & 1 & 1 & x_3 & 1 & 1 \\
    1   & x_1 & x_2 & x_3 & x_4 & x_5 & x_6 & 1 \\
\end{array}
\]
It can be verified that $X$ is a CKL-algebra. 
The corresponding strict partial order on $X$ is represented by the following Hasse diagram:
\[
\begin{tikzcd}[row sep=2.2em, column sep=2.2em]
& 1 \arrow[d, no head] & \\
& x_1 \arrow[dl, no head] \arrow[dr, no head] & \\
x_2 \arrow[d, no head] & & x_3 \arrow[d] \arrow[ddll, no head]  \\
x_4 \arrow[d, no head] & & x_5 \\
x_6 & &
\end{tikzcd}
\]
By \cref{Lem: tail of minimal element in CKL is an ideal}, 
the tail $\uset x_5 = \{1, x_1, x_3, x_5\}$ is an ideal of $X$. 
In contrast, the upset $\uset x_6$
is not an ideal of $X$.
\end{exa}

  \begin{lem}
  \label{lem:CKL without minimum is a L-subalgebra}
      Let $X$ be a Glivenko algebra with the smallest element $0\in X$.
      Then $Y=X\setminus\{0\}$ is a CKL-subalgebra.
      Moreover, $I$ is an ideal of $Y$ if and only if $I$ is an ideal of $X$ or $I=Y$.
  \end{lem}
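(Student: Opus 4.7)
The plan is to verify three things: $Y$ is a CKL-subalgebra, every ideal of $X$ other than $X$ is an ideal of $Y$, and every ideal of $Y$ other than $Y$ is an ideal of $X$. For the first, I would use the KL-inequality $x\cdot y\ge y$ (recorded in \cref{linear is KL}, which is really a property of any KL-algebra): for $x,y\in Y$, $x\cdot y\ge y>0$, hence $x\cdot y\in Y$. The CKL identities are inherited from $X$, and $1\in Y$, so $Y$ is indeed a CKL-subalgebra.

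For the forward direction, let $I$ be an ideal of $X$ with $I\ne X$. First, $0\notin I$: otherwise, since $0\cdot y=1\in I$ for every $y\in X$, axiom (I1) would force $y\in I$ for all $y$, i.e.\ $I=X$. Hence $I\subseteq Y$, and by \cref{rem: ideals of KL and CKL-alg}, $I$ is an ideal of the CKL-algebra $Y$ as soon as $1\in I$ and (I1) holds for $y\in Y$, both of which descend directly from the corresponding properties in $X$.

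Conversely, suppose $I$ is an ideal of $Y$ with $I\ne Y$. By \cref{rem: ideals of KL and CKL-alg} it suffices to verify (I1) in $X$; only the case $y=0$ is non-trivial. Suppose, towards a contradiction, that there exists $x\in I$ with $x\cdot 0\in I$. Substituting $y:=0$ in the cycloid equation \eqref{Cycle Sets eq} yields
\[
(x\cdot 0)\cdot(x\cdot z)=(0\cdot x)\cdot(0\cdot z)=1\cdot 1=1,
\]
so $x\cdot 0\le x\cdot z$ for every $z\in X$. For any $z\in Y$, the KL-inequality gives $x\cdot z\ge z>0$, so $x\cdot z\in Y$; since $I$ is upward-closed inside $Y$ (any $u\in I$ with $u\le v\in Y$ satisfies $u\cdot v=1\in I$, and (I1) then forces $v\in I$) and $x\cdot 0\in I$, we deduce $x\cdot z\in I$. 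A second application of (I1) in $Y$ with $x,\,x\cdot z\in I$ and $z\in Y$ gives $z\in I$. Since $z\in Y$ was arbitrary, $I=Y$, contradicting the hypothesis. Hence no such $x$ exists, and $I$ is an ideal of $X$.

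The delicate point is precisely the $y=0$ case of (I1) in $X$; the crucial manoeuvre is plugging $y:=0$ into the cycloid equation, which collapses it to the inequality $x\cdot 0\le x\cdot z$. Upward-closure of $I$ inside $Y$ then transports $x\cdot 0\in I$ to $x\cdot z\in I$ for every $z\in Y$, and a last application of (I1) delivers $z\in I$, forcing $I=Y$.
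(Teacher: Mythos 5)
Your proof is correct and follows essentially the same route as the paper: the subalgebra claim via the KL-inequality $x\cdot y\ge y>0$, and the key step of showing that $x,\,x\cdot 0\in I$ forces $I=Y$ by specializing the cycloid equation to $(x\cdot 0)\cdot(x\cdot z)=(0\cdot x)\cdot(0\cdot z)=1$ and then applying (I1) twice, which is exactly the paper's computation with $x^*=x\cdot 0$. Your write-up is somewhat more explicit about reducing to the $y=0$ case of (I1) and about the easy converse direction, but the substance is identical.
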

  \begin{proof}
      To prove that $Y$ is a CKL-subalgebra, it is enough to notice that if $x,y\in Y$, then $0<y\leq x\cdot y$, hence $x\cdot y\in Y$.

      Let now $I$ be an ideal of $Y$ that is not an ideal of $X$. Then there exists $x\in I$ such that the negation $x^*\in I$. We claim that $I=Y$.
      Let $y\in Y$, then $x^*\in I$ and  $x\cdot y\in Y$, but 
      \[
     x^*\cdot (x\cdot y)
 =(0\cdot x)\cdot (0\cdot y) 
     =1.
      \]
      So $x\cdot y\in I$, since $I$ is an ideal of $Y$.
      But now we have $x\in I$, $y\in Y$ such that $x\cdot y\in I$, thus $y\in I$.
  \end{proof}

  \begin{thm}\label{Thm: tail+ L-algebra}
  Let $n > 1$ and let $X$ be a tail$^+$ CKL-algebra of size $n$.  
If $X$ is simple, then $X$ is linear, hence it is isomorphic to $\mathbf{A}_n$.
  \end{thm}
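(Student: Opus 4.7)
The plan is to proceed by strong induction on $n = |X|$. The base case $n = 2$ is immediate, since every $2$-element L-algebra is linear.

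For the inductive step, we split according to \cref{Def: tail+}. In the first case, $X$ itself has a tail $\uset z$ at some minimal element $z$, and we invoke \cref{Lem: tail of minimal element in CKL is an ideal} to conclude that $\uset z$ is an ideal of $X$. Since $z \neq 1$, this ideal strictly contains $\{1\}$, so simplicity forces $\uset z = X$; hence $X$ is linear, and \cref{simple linear} yields $X \cong \mathbf{A}_n$.

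In the second case, there exist L-subalgebras $Y \subseteq Y_0 \subseteq X$ with $Y_0 \setminus Y = \{z_0\}$ the minimum of $Y_0$, $Y$ admitting a tail $T = \uset_Y z^{*}$, and $X \setminus Y$ linear. The first step is to observe that, since $z_0$ lies strictly below every element of $Y$ and the linear chain $X \setminus Y$ contains $z_0$, its bottom element $w^{*}$ is the unique minimum of $X$. Thus $X$ is Glivenko, and by \cref{lem:CKL without minimum is a L-subalgebra} the subalgebra $X_1 := X \setminus \{w^{*}\}$ is again a CKL-algebra; moreover, that lemma puts the ideals of $X_1$ in bijection with those of $X$, so $X_1$ is simple of size $n - 1$.

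The next step is to show that $X_1$ remains tail$^{+}$, so that the inductive hypothesis applies. If $w^{*} \neq z_0$, this is immediate: the triple $(Y, Y_0, z_0)$ continues to satisfy \cref{Def: tail+} inside $X_1$, since $X_1 \setminus Y$ is obtained from $X \setminus Y$ by removing $w^{*}$ and is still linear. The hard part will be the subcase $w^{*} = z_0$, where $Y_0$ is no longer contained in $X_1$; here we plan to inspect the linear chain $X_1 \setminus Y = X \setminus Y_0$ and argue that either some element of it serves as a new $z_0'$ strictly below every element of $Y$, yielding a fresh triple $(Y, Y_0', z_0')$ witnessing \cref{Def: tail+}, or the tail $T$ of $Y$ together with the linearity of $X \setminus Y$ produces a minimal element of $X_1$ whose upset is linear, reducing the argument back to the first case. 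Once $X_1$ is known to be tail$^{+}$, induction gives $X_1 \cong \mathbf{A}_{n-1}$, hence $X_1$ is linear; adjoining $w^{*}$ as the minimum preserves linearity, so $X$ itself is linear of size $n$, and \cref{simple linear} concludes that $X \cong \mathbf{A}_n$.
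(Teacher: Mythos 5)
Your overall route is the same as the paper's: in the case where $X$ itself has a tail you use \cref{Lem: tail of minimal element in CKL is an ideal} together with simplicity to force $X$ to equal its (linear) tail, and in the remaining case you observe that the bottom $w^{*}$ of the chain $X\setminus Y$ is the global minimum of $X$, peel it off with \cref{lem:CKL without minimum is a L-subalgebra}, and induct. The first case, the identification of $w^{*}$ as the minimum of $X$, and the simplicity of $X_1=X\setminus\{w^{*}\}$ are all correct and match the paper's argument.

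However, what you submit is explicitly a plan, and the step you yourself label ``the hard part'' is a genuine gap rather than a deferred routine check. In the subcase $w^{*}=z_0$ the surviving algebra is $X_1=Y\cup C$ with $C=X\setminus Y_0$ a (possibly nonempty) chain all of whose elements lie above $z_0$, and nothing in \cref{Def: tail+} makes $X_1$ tail$^{+}$ again. Your first escape route does not obviously work: condition (2) of \cref{Def: tail+} only places $z_0$ below $Y$, so the minimum $z_0'$ of $C$ need not be comparable to, let alone below, every element of $Y$, and $Y\cup\{z_0'\}$ need not be an L-subalgebra with $z_0'$ as its least element. Your second escape route is equally unsupported: no minimal element of $X_1$ is guaranteed to have a linear upset, since the upset of $\min C$ may meet $Y$ in a non-chain, elements of $C$ may sit below the minimal element of the tail of $Y$ (destroying its minimality), or above it (destroying linearity of its upset). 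So the inductive hypothesis cannot be invoked for $X_1$ as written. For what it is worth, the paper's own proof of this theorem is a one-line appeal to \cref{lem:CKL without minimum is a L-subalgebra} ``and induction'' and is silent on exactly this point, so you have correctly located where the real work sits; but locating the difficulty is not the same as resolving it, and until this subcase is closed the argument does not constitute a proof.
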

 \begin{proof}
First, we start with the case when $X$ is a simple CKL-algebra with a tail. 
By \cref{Lem: tail of minimal element in CKL is an ideal}, $X$ has a unique minimal element.  
Hence, the partial order of $X$ is linear, and $X$ is isomorphic to $\mathbf{A}_n$.

Let $X$ be a tail$^+$ CKL-algebra.    
By \cref{lem:CKL without minimum is a L-subalgebra} and induction, $X$ is also linear and isomorphic to $\mathbf{A}_n$.
\end{proof}

Moreover, a CKL simple L-algebra cannot have more than one connected component in the Hasse diagram of $X\setminus\{1\}$ as the following proposition states.

\begin{pro}
    Let $X$ be a CKL-algebra and let $C$ be a connected component of the Hasse diagram of $X\setminus\{1\}$.
    Then $C\cup\{1\}$ is an ideal of $X$.
\end{pro}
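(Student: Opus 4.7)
By \cref{rem: ideals of KL and CKL-alg}, showing that $I := C \cup \{1\}$ is an ideal of the CKL-algebra $X$ reduces to the trivial fact $1 \in I$ together with property (I1). I would therefore fix $x \in I$ and $y \in X$ with $x \cdot y \in I$, aiming to deduce $y \in I$. The degenerate cases $x = 1$ (then $y = x \cdot y \in I$) and $y = 1$ are immediate, so I may assume $x, y \in X \setminus \{1\}$.

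The key input is that every CKL-algebra is a KL-algebra, so the KL identity applied with the roles of the two variables swapped yields $y \cdot (x \cdot y) = 1$, i.e.\ $y \le x \cdot y$. I would then split on the two possibilities allowed by $x \cdot y \in I$. If $x \cdot y = 1$ then $x \le y$, so $x \in C$ is comparable to $y \in X \setminus \{1\}$. If instead $x \cdot y \in C$, then $y \le x \cdot y$ with both elements in $X \setminus \{1\}$, again giving a comparability between $y$ and an element of $C$.

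To finish I would invoke the following observation: two distinct comparable elements of $X \setminus \{1\}$ lie in the same connected component of the Hasse diagram of $X \setminus \{1\}$. Granted this, either sub-case above forces $y \in C \subseteq I$, establishing (I1) and hence the proposition. The main obstacle is exactly this comparability-implies-connectedness step, which is where any hypothesis on $X$ must be spent. It is automatic in the finite setting that dominates the rest of the paper, since any strict chain $a < b$ in a finite poset refines to a finite sequence of covering relations, each an edge of the Hasse diagram; more generally, one would need a noetherian or well-foundedness assumption ensuring that the cover relation generates the order. With that step in place, the case analysis above closes the proof.
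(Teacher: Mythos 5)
Your proof is correct and follows essentially the same route as the paper's: reduce to (I1) via the CKL characterization of ideals, handle $x=1$ separately, split on whether $x\cdot y=1$ or $x\cdot y\in C$, and use the KL inequality $y\le x\cdot y$ to obtain a comparability that forces $y\in C\cup\{1\}$. Your explicit observation that ``comparable implies Hasse-connected'' requires finiteness (or well-foundedness of the order) is a hypothesis the paper's proof uses silently, so it is a worthwhile clarification rather than a genuine divergence.
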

\begin{proof}
    Thanks to \cref{rem: ideals of KL and CKL-alg}, 
    we only need to prove property (I1) of the definition of ideal.

    Let $x\in C\cup\{1\}$ and $y\in X$ such that $x\cdot y\in C\cup\{1\}$.
    Then either $x=1$ or $x\in C$. 
    
    If $x=1$, we have directly $y=x\cdot y\in C\cup\{1\}$.
    
    Otherwise, $x\in C$. Since $x\cdot y\in C\cup\{1\}$, we have two cases again: either $x\cdot y=1$ or $x\cdot y\in C$.
    If $x\cdot y=1$, then $x\leq y$. Hence, $y$ is connected to $x$ in the Hasse diagram. So $y=1$ or $y\in C$.
    Assume now that $x\cdot y\in C$.
    Using that $X$ is CKL, hence KL, we get that $y\leq x\cdot y$.
    Thus, $y$ is connected to $x$ in the Hasse diagram. So $y=1$ or $y\in C$.
    In any case, we proved that $y\in C\cup\{1\}$.
\end{proof} 

Given the previous proposition and based on computational results,
we have the following conjecture.

\begin{conjecture}
    Every finite simple CKL-algebra is linear.
\end{conjecture}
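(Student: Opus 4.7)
The plan is to proceed by contradiction, assuming $X$ is a finite simple CKL-algebra that is not linear, and then to construct a proper non-trivial ideal of $X$, contradicting simplicity. The section has already produced several ideal-generating techniques, and the strategy is to combine them systematically.

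First I would apply the proposition immediately preceding the conjecture: the Hasse diagram of $X \setminus \{1\}$ must be connected, since otherwise each connected component together with $1$ would yield a proper non-trivial ideal. Second, for every minimal element $z \in X$, the upset $\uset z$ cannot be linear, for otherwise \cref{Lem: tail of minimal element in CKL is an ideal} would produce a proper non-trivial ideal. Hence $X$ has no tail, and every minimal element is covered by at least two incomparable elements. In particular, \cref{Thm: tail+ L-algebra} cannot be applied directly, so one needs a genuinely new construction.

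My next step would be to generalise the tail$^+$ construction of \cref{Def: tail+}. One promising route is to anchor the ``tail'' not at a minimal element but at an invariant element sitting closer to $1$: if $w \in X \setminus \{1\}$ is invariant and $\uset w$ is linear, then an argument parallel to the proof of \cref{Lem: tail of minimal element in CKL is an ideal}, exploiting the CKL identity $x \cdot (y \cdot z) = y \cdot (x \cdot z)$, should show that $\uset w$ is an ideal. A complementary attempt would be to generate, from a minimal element $z$, the smallest subset $I \supseteq \uset z$ closed under (I1)--(I4); using \cref{Thm: The distributive rule of L-algebras} one could then try to intersect such candidates to force properness.

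The main obstacle will be proving that some such invariant branch point, or some such closure, \emph{always} produces a proper subset. The CKL identity gives local control over where left multiplications can send an element, but it does not immediately rule out an algebra in which every branch of the Hasse diagram is reachable from every other under iterated left multiplication. Overcoming this likely requires a combinatorial invariant (e.g.\ an induction on the number of branching vertices, or on the depth of the invariant spine of $X$) together with a refined understanding of how the cycloid equation restricts the behaviour of $\sigma_x$ on branch points. The conjecture remains open precisely because such a uniform invariant has not yet been isolated, although the proposition preceding the conjecture, together with \cref{Lem: tail of minimal element in CKL is an ideal} and \cref{Thm: tail+ L-algebra}, already handles a substantial portion of the possible Hasse diagrams.
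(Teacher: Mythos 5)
The statement you were asked to prove is stated in the paper as a \emph{conjecture}: the paper offers no proof of it, only supporting evidence (the connected-component proposition, \cref{Lem: tail of minimal element in CKL is an ideal}, and \cref{Thm: tail+ L-algebra}, plus computational checks). Your proposal is likewise not a proof --- it is a research plan, and you say so yourself in the final sentence. The concrete gap is the one you name but do not close: after reducing to the case where $X\setminus\{1\}$ is connected and no minimal element has a linear upset, you have no construction that is guaranteed to produce a proper non-trivial ideal. Your two candidate constructions both fail to deliver this. For the first, even granting that an invariant $w$ with $\uset w$ linear yields an ideal (which you do not prove --- the argument in \cref{Lem: tail of minimal element in CKL is an ideal} uses minimality of $z$ in an essential way, e.g.\ to locate the minimal $x$ above both $y$ and $z$, and does not transfer verbatim), nothing guarantees that a non-linear simple CKL-algebra contains such a $w$. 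For the second, the ideal generated by $\uset z$ may simply be all of $X$, and intersecting candidates that each equal $X$ gives $X$; distributivity of the ideal lattice does not help here.

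There is also a small logical slip worth flagging: from ``$\uset z$ is not linear for every minimal $z$'' you conclude that ``every minimal element is covered by at least two incomparable elements.'' Non-linearity of $\uset z$ only gives two incomparable elements somewhere in $\uset z$; the branching need not occur at the covers of $z$ itself. This does not affect your overall plan, but it means the structural picture you start from is weaker than stated. In summary: your reductions correctly reproduce the partial results the paper already has, but the essential new ingredient --- a uniform way to manufacture a proper ideal in a connected, tail-free, non-linear CKL-algebra --- is missing, which is precisely why the paper leaves this as an open conjecture.
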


\section{Symmetric semidirect products and   Hilbert algebras}\label{Section: Symmetric semidirect products and   Hilbert algebras}

In this section, we mainly study the ideals, semidirect products of Hilbert algebras, and the structure of linear Hilbert algebras.

\begin{lem}
\label{Hilber_ideals}
    Let $X$ be a Hilbert algebra, and let $z\in X$.
    Then the upset $\uset z$ is the ideal $\langle z\rangle$ generated by $z$.
\end{lem}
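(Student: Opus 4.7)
The plan is to verify the two inclusions $\uset z \subseteq \langle z\rangle$ and $\langle z\rangle \subseteq \uset z$, passing through the intermediate fact that $\uset z$ is itself an ideal of $X$. Since every Hilbert algebra is a CKL-algebra, \cref{rem: ideals of KL and CKL-alg} reduces the ideal check for $\uset z$ to verifying that $1\in \uset z$ (trivial, as $z\leq 1$) and that (I1) holds.

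For (I1), suppose $x\in \uset z$ and $x\cdot y\in \uset z$, i.e.\ $z\cdot x=1$ and $z\cdot(x\cdot y)=1$. I would apply the defining Hilbert identity
\[
z\cdot(x\cdot y)=(z\cdot x)\cdot(z\cdot y),
\]
whose right-hand side collapses to $1\cdot(z\cdot y)=z\cdot y$. Hence $z\cdot y=1$, so $y\in \uset z$, and $\uset z$ is indeed an ideal.

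Once $\uset z$ is known to be an ideal containing $z$, minimality of $\langle z\rangle$ gives $\langle z\rangle\subseteq \uset z$. For the reverse inclusion, I would take $y\in \uset z$; then $z\in \langle z\rangle$ and $z\cdot y=1\in \langle z\rangle$, so property (I1) applied inside $\langle z\rangle$ itself yields $y\in \langle z\rangle$.

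The only real obstacle is recognizing that the Hilbert axiom $x\cdot(y\cdot z)=(x\cdot y)\cdot(x\cdot z)$ is precisely what is needed, and that the weaker CKL identity is not enough: for instance, in $\mathbf{A}_3$ the upset $\uset x_1=\{x_0,x_1\}$ fails (I1) because $x_1\cdot x_2=x_1$ while $x_2\notin \uset x_1$. Thus the proof genuinely hinges on the Hilbert identity, not merely on the CKL property.
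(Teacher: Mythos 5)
Your proof is correct and follows essentially the same route as the paper: both reduce the ideal check to $1\in\uset z$ plus (I1) via the CKL characterization, both verify (I1) by the identical computation $z\cdot(x\cdot y)=(z\cdot x)\cdot(z\cdot y)=1\cdot(z\cdot y)=z\cdot y$, and both obtain the reverse inclusion by applying (I1) inside $\langle z\rangle$ to $z\cdot x=1$. The added remark that the Hilbert identity (and not mere CKL) is essential, illustrated by $\uset x_1$ in $\mathbf{A}_3$, is a correct and worthwhile observation not present in the paper.
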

\begin{proof}
       Note that $X$ is also a CKL-algebra, so $I\subseteq X$ is an ideal of $X$ if and only if $1\in I$ and 
    $I$ satisfies (I1).\begin{itemize}
        \item Clearly $1\in \uset z$.
        \item If $x,x\cdot y\in \uset z$, then 
        \[
        z\cdot y=1\cdot (z\cdot y)=(z\cdot x)\cdot (z\cdot y)=z\cdot (x\cdot y)=1,
        \]
        i.e. $y\in \uset z$.
    \end{itemize}
    Then $\uset z$ is an ideal of $X$. Thus, $\langle z\rangle\subseteq \uset z$. 

    For each $x\in \uset z$, we have $z\cdot x=1\in \langle z\rangle$. By condition (I1), we conclude that $x\in \langle z\rangle$. Therefore, $\langle z\rangle=\uset z$.
\end{proof}
\begin{pro}
    Let $X$ be a finite Hilbert algebra and let $I$ be an ideal of $X$.  
    Denote by $\min(I)$ the set of all minimal elements of $I$.  
    Then
    \[
        I \;=\; \bigcup_{z\in \min(I)} \uset{z}.
    \]

    Moreover, let $\min(X)=\{m_1,\ldots,m_n\}$ be the set of all minimal elements of $X$,  
    and for each $1\le i\le n$ define
    \[
         P_i \;=\; \bigcup_{m\in \min(X)\setminus\{m_i\}} \uset{m}.
    \]
   If $P$ is a proper ideal of $X$ such that the $P_i\subseteq P$ for some $1\leq i\leq n$ and $X\setminus P$ is linear,  
then $P$ is a prime ideal of $X$.
\end{pro}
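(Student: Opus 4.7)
The plan is to reduce both assertions to the principal-ideal description $\langle z\rangle=\uset z$ furnished by \cref{Hilber_ideals}, since in a Hilbert algebra the ideal generated by an element coincides with its upset. This translates ideal-theoretic questions into poset-theoretic ones about the partial order underlying $X$.

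For the decomposition $I=\bigcup_{z\in\min(I)}\uset z$, I would first note that each $z\in\min(I)$ lies in $I$, so $\uset z=\langle z\rangle\subseteq I$ by \cref{Hilber_ideals}, giving the inclusion $\supseteq$. The reverse inclusion is immediate from the finiteness of $X$: every $x\in I$ lies above some minimal element of $I$, hence belongs to the union.

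For the primality of $P$, I would verify the definition of prime ideal directly. Let $J$ be an ideal of $X$ with $J\not\subseteq P$; the task is to show $J\cdot P\subseteq P$. Pick $y\in J\setminus P$. Using \cref{Hilber_ideals}, membership of an element $x$ in $J\cdot P$ is equivalent to $\uset x\cap J\subseteq P$. Suppose, for contradiction, that some $x\in J\cdot P$ satisfies $x\notin P$. Then $x,y\in X\setminus P$, and since $X\setminus P$ is linear by hypothesis, $x$ and $y$ are comparable. If $x\leq y$, then $y\in\uset x\cap J$ while $y\notin P$, contradicting $\uset x\cap J\subseteq P$. If $y\leq x$, then $x\in\uset y=\langle y\rangle\subseteq J$ (again by \cref{Hilber_ideals}), so $x\in\uset x\cap J\subseteq P$, contradicting $x\notin P$. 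Either way we reach a contradiction, so $J\cdot P\subseteq P$.

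A conceptual point worth highlighting is that the hypothesis $P_i\subseteq P$ for some $i$ is in fact an automatic consequence of the linearity of $X\setminus P$: indeed, a linear complement contains at most one minimal element of $X$, so $\uset{m_j}\subseteq P$ for all $j$ except possibly one, forcing some $P_i\subseteq P$ by the first assertion. Thus the $P_i$-condition is a descriptive aid rather than an additional constraint, and the argument above does not invoke it directly. I expect no serious obstacle; the only delicate point is to make sure the two cases $x\leq y$ and $y\leq x$ really exhaust the possibilities, which is guaranteed precisely by the linearity of $X\setminus P$.
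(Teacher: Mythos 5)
Your proof is correct and follows essentially the same route as the paper's: both parts rest on the identity $\langle x\rangle=\uset x$ from \cref{Hilber_ideals}, and the primality argument in each case derives a contradiction from the comparability of two elements of the linear complement $X\setminus P$. Your further observation that the hypothesis $P_i\subseteq P$ is automatic (since the down-set $X\setminus P$ of a proper ideal in a finite Hilbert algebra is linear only if it contains exactly one minimal element of $X$) is also correct and is a mild sharpening that the paper does not record.
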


\begin{proof}
    By \cref{Hilber_ideals}, we have $\langle z_i\rangle = \uset{z_i}\subseteq I$ for each 
    $z_i\in \min(I)$. Hence
    \[
        \bigcup_{z\in\min(I)} \uset{z} \;\subseteq\; I.
    \]

    Conversely, let $x\in I$.  
    Since $I$ is finite, it has minimal elements, and 
    every element of $I$ lies above some minimal element of $I$.  
    Thus, there exists $z_j\in\min(I)$ such that $z_j\le x$, i.e.\ $x\in\uset{z_j}$.  
    Therefore
    \[
        I \;\subseteq\; \bigcup_{z\in\min(I)} \uset{z}.
    \]

    Combining the two inclusions yields
    \[
        I = \bigcup_{z\in\min(I)} \uset{z},
    \]
    as claimed.

   Let $P$ be a proper ideal such that $P_i \subseteq P$ for some $1 \le i \le n$ and $X\setminus P$ is linear.  
Then 
\[
P = P_i \cup \uset z_i,
\qquad\text{where } m_i < z_i.
\]

Assume that $I \not\subseteq P$.  
Then there exists a minimal element $z \in I$ such that
\[
m_i \le z < z_i.
\]
Since $I \setminus P$ is linear, we have
\begin{align*}
I \cdot P
    &= \{\, x \in X \mid \uset x \cap I \subseteq P \,\}  \\
    &\subseteq \{\, x \in X \mid \uset x \cap \uset z \subseteq P \,\} \\
    &= \{\, x \in X \mid \uset x \subseteq P \,\} \\
    &\subseteq P.
\end{align*}
Thus $I \cdot P \subseteq P$ for every ideal $I$ with $I \not\subseteq P$.  
Therefore, $P$ is a prime ideal.
\end{proof}

Using \cref{Hilber_ideals}, it is now easy to show that there is only one simple Hilbert algebra.

\begin{pro}
\label{Hilbert_simple}
      Let $X$ be a Hilbert algebra. $X$ is simple if and only if $|X|\leq 2$.  
\end{pro}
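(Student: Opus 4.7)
The plan is to exploit the very strong structural fact stated in \cref{Hilber_ideals}: in a Hilbert algebra, every principal ideal $\langle z\rangle$ coincides with the upset $\uset z$. This will make the ideal lattice essentially visible from the partial order, and simplicity becomes a purely order-theoretic question.

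For the easy direction, when $|X|\le 2$ the only candidates for ideals are $\{1\}$ and $X$, both of which are ideals; so $X$ is trivially simple. For the harder direction, I would assume $|X|\ge 3$ and build an explicit proper nontrivial ideal. The idea is to find some $z\in X\setminus\{1\}$ such that $\uset z\neq X$, because then \cref{Hilber_ideals} gives $\langle z\rangle=\uset z$, and this is a proper ideal distinct from $\{1\}$ (since $z\in \uset z$ and $z\neq 1$), contradicting simplicity.

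The key observation is then: an element $z\neq 1$ satisfies $\uset z=X$ if and only if $z$ is the minimum of $X$, and a poset has at most one minimum. Since $|X|\ge 3$, there are at least two elements strictly below $1$. At most one of these can be the minimum of $X$, so we can always pick $z\in X\setminus\{1\}$ with $\uset z\subsetneq X$. This $z$ produces the required proper nontrivial ideal $\uset z$.

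I do not expect any real obstacle in this argument; the only thing to be careful about is the case distinction over whether $X$ actually has a minimum element, but the poset-theoretic bound \emph{at most one minimum} handles both possibilities uniformly. Essentially all the work has already been done in \cref{Hilber_ideals}, which collapses the problem to elementary reasoning about upsets.
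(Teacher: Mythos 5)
Your proposal is correct and follows essentially the same route as the paper: both rely on \cref{Hilber_ideals} to realize upsets of non-unit elements as ideals, and then observe that two distinct elements of $X\setminus\{1\}$ cannot both have upset equal to $X$ (the paper phrases this as the contradiction $z_1<z_2<z_1$, you phrase it as uniqueness of the minimum — the same fact). No gaps.
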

\begin{proof}

    Let $z\in X$, then, by \cref{Hilber_ideals}, $\uset z=\{x\in X\mid z\leq x\}$ is an ideal of $X$.
    
    Assume now, by contradiction, that $X$ is simple and $|X|>2$, then there exist $z_1, z_2\in X\setminus \{1\}$ distinct elements.
    But $\uset z_1$ and $\uset z_2$ are non-trivial ideals, so $\uset z_1=X=\uset z_2$, which is a contradiction because we would have $z_1<z_2<z_1$.
\end{proof}

\begin{pro}
    Let $\LH_n = \{x_0, x_1, \dots, x_{n-1}\}$ with multiplication defined by 
    \[
        x_i \cdot x_j =
        \begin{cases}
            x_0 = 1, & \text{if } i \geq j, \\[4pt]
            x_j, & \text{if } i < j.
        \end{cases}
    \]
    Then $\LH_n$ is a linear Hilbert algebra.
\end{pro}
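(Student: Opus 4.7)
The plan is to verify the three defining properties of a linear Hilbert algebra in turn: the L-algebra axioms, the Hilbert identity, and linearity of the induced order.

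First, I would read off from the definition that $x_i \cdot x_j = 1 = x_0$ if and only if $i \geq j$. This single observation already yields several things at once. The induced partial order satisfies $x_i \leq x_j \iff i \geq j$, so the order on $\LH_n$ is the chain $x_0 > x_1 > \dots > x_{n-1}$, which gives linearity. The antisymmetry axiom of L-algebras holds, since $x_i \cdot x_j = x_j \cdot x_i = 1$ forces $i \geq j$ and $j \geq i$, hence $i = j$. The remaining unit identities $1 \cdot x = x$, $x \cdot 1 = 1$, and $x \cdot x = 1$ follow immediately from the formula.

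Next, I would verify the Hilbert identity
\[
x_i \cdot (x_j \cdot x_k) = (x_i \cdot x_j) \cdot (x_i \cdot x_k)
\]
by a case analysis on the relative positions of $i$, $j$, and $k$. The key observation is that both sides equal $1$ precisely when $i \geq k$ or $j \geq k$, and both reduce to $x_k$ otherwise. Concretely, one splits into the four subcases given by whether $i \geq j$ or $i < j$, and whether $i \geq k$ or $i < k$; in the remaining case $i < j$ and $i < k$ one further distinguishes $j \geq k$ from $j < k$. Each resulting expression evaluates to either $1$ or $x_k$ using only the definition of the product.

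Finally, the cycloid identity is forced by the Hilbert identity together with the CKL identity $x_i \cdot (x_j \cdot x_k) = x_j \cdot (x_i \cdot x_k)$: applying the Hilbert identity to both sides of the cycloid reduces it to this symmetry in $i$ and $j$, which is verified by the same style of case analysis (or, equivalently, can be read off from the fact that the Hilbert identity has already been established and the value only depends on whether $\min(i,j) \geq k$). The main obstacle is purely organizational, namely enumerating the cases without mistakes; no conceptual difficulty arises, since $x_i \cdot x_j$ depends only on whether $i$ dominates $j$.
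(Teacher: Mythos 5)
Your proposal is correct and follows essentially the same route as the paper: both arguments reduce $x_i\cdot(x_j\cdot x_k)$, $(x_i\cdot x_j)\cdot(x_i\cdot x_k)$ and $(x_j\cdot x_i)\cdot(x_j\cdot x_k)$ to the same piecewise expression ($x_k$ when $i<k$ and $j<k$, and $1$ otherwise), which settles the Hilbert identity and the cycloid equation simultaneously. You are in fact slightly more thorough than the paper, which leaves the unit axioms, antisymmetry, and linearity implicit.
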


\begin{proof}
    For all $x_i, x_j, x_k \in \LH_n$, we have
    \[
        x_i \cdot (x_j \cdot x_k)
        =
        \begin{cases}
            x_k, & \text{if } j < k \text{ and } i < k, \\[4pt]
            1, & \text{otherwise.}
        \end{cases}
    \]
    On the other hand,
    \[
        (x_i \cdot x_j) \cdot (x_i \cdot x_k)
        =
        (x_j \cdot x_i) \cdot (x_j \cdot x_k)
        =
        \begin{cases}
            x_k, & \text{if } j < k \text{ and } i < k, \\[4pt]
            1, & \text{otherwise.}
        \end{cases}
    \]
    Hence, the defining identity of a Hilbert algebra,
    \[
        x_i \cdot (x_j \cdot x_k)
        = (x_i \cdot x_j) \cdot (x_i \cdot x_k)
        = (x_j \cdot x_i) \cdot (x_j \cdot x_k),
    \]
    holds for all $x_i, x_j, x_k \in \LH_n$. Therefore, $\LH_n$ is a Hilbert algebra.
\end{proof}

\begin{pro}
\label{Hilbert_linear}
 Let $n>1$ and $X=\{x_0>x_1>\cdots >x_{n-1}\}$ be a linear Hilbert algebra.
    Then $X$ is isomorphic to $\LH_n$.
    
\end{pro}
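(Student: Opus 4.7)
The plan is to combine \cref{Hilber_ideals} with \cref{Thm: linearL_ideals} to force every non-top element of $X$ to be invariant, and then read off the multiplication table.

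First, since $X$ is a Hilbert algebra, \cref{Hilber_ideals} tells us that for every $z\in X$ the upset $\uset z$ is an ideal of $X$. Applied to the linear chain $x_0>x_1>\cdots>x_{n-1}$, this means that
\[
\uset x_k=\{x_j\mid j\le k\}
\]
is an ideal of $X$ for every $k\in\{0,\dots,n-1\}$.

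Second, I invoke \cref{Thm: linearL_ideals}, which classifies the ideals of a linear L-algebra: an ideal is of the form $\uset x_i$ where either $i=n-1$, or $x_{i+1}$ is an invariant element. Since by the previous step \emph{every} $\uset x_i$ is an ideal, the second alternative must hold for every $i\in\{0,\dots,n-2\}$. Hence each $x_j$ with $1\le j\le n-1$ is invariant, meaning that $y\cdot x_j=x_j$ for all $y>x_j$.

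Finally, I read off the multiplication. If $i\ge j$, then $x_i\le x_j$ and therefore $x_i\cdot x_j=1$ by definition of the order. If $i<j$, then $x_i>x_j$ and invariance of $x_j$ yields $x_i\cdot x_j=x_j$. These are precisely the defining relations of $\LH_n$, so the map $x_i\mapsto x_i$ is an isomorphism $X\cong\LH_n$.

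There is no real obstacle in this proof; the two previously established results do all the work, and the only verification is matching the formulas. The conceptual content is entirely concentrated in \cref{Hilber_ideals}, which forces every upset to be an ideal in a Hilbert algebra, a property not shared by arbitrary linear L-algebras.
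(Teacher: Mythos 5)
Your proof is correct and follows essentially the same route as the paper: both use \cref{Hilber_ideals} to show that every upset $\uset x_i$ is an ideal, then apply \cref{Thm: linearL_ideals} to conclude that every $x_j$ with $j\ge 1$ is invariant, which determines the multiplication table of $\LH_n$. Your version merely spells out the final reading-off of the product, which the paper leaves implicit.
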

\begin{proof}
By the definition of L-algebra, $x_0=1$ is an invariant element.

Let now $j\in \N_{\geq 1}$. Since $X$ is Hilbert,
by \cref{Hilber_ideals} $\uset x_{j-1}$ is an ideal of $X$.
Moreover, since $X$ is also linear, by \cref{Thm: linearL_ideals}, $x_j$ is an invariant element. 
Therefore, we proved the thesis.
\end{proof}

\begin{cor}
    Let $X$ be a linear Hilbert algebra of size $n$, and let $I$ be an ideal of $X$. Then:
    \begin{enumerate}
        \item There exists an $\rho$ such that $I$ operates on $X/I$ via $\rho$ as Hilbert algebras, and 
        \[
            X \cong I \sym_{\rho} (X/I).
        \]
        
        \item Conversely, if there exists a $\rho$ such that $I$ operates on $Y$ via $\rho$ as Hilbert algebras and 
        \[
            X \cong I \sym_{\rho} Y,
        \]
        then $Y \cong X/I$.
    \end{enumerate}
\end{cor}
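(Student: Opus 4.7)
The plan is to exploit the explicit classification of linear Hilbert algebras supplied by \cref{Hilbert_linear}: I identify $X$ with $\LH_n$. Every element of $\LH_n$ is invariant (this is established during the proof of \cref{Hilbert_linear}), so \cref{Thm: linearL_ideals} shows that every ideal has the form $I = \uset x_i = \{x_0,\ldots, x_i\} \cong \LH_{i+1}$ for a unique $i \in \{0, \ldots, n-1\}$, and a direct computation gives $X/I \cong \LH_{n-i}$. In particular, $\LH_n$ has exactly one ideal of each size in $\{1, 2, \ldots, n\}$, so ideals of $X$ are uniquely determined by their cardinality.

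For part (1), I would define the operation $\rho \colon X/I \to \End(I)$ by $\rho_{[1]_I} = \id_I$ and $\rho_{[x_j]_I}(x) = 1$ for all $x \in I$ and all $j > i$. The constant-to-$1$ map is an idempotent Hilbert-algebra endomorphism of $I$, and the axioms required to make $X/I$ operate on $I$ as a Hilbert algebra, namely $\rho_1 = \id$, $\rho_{u \cdot v} \circ \rho_u = \rho_{v \cdot u} \circ \rho_v$, $x \cdot \rho_u(x) = 1$, $\rho_u\rho_v = \rho_v\rho_u$, $\rho_u(x \cdot y) = x \cdot \rho_u(y)$, and $\rho_u^2 = \rho_u$, follow from a short case split using the fact that any composition involving a constant-to-$1$ map is itself constant to $1$. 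I then set
\[
    \phi \colon X \to I \sym_\rho (X/I),
    \qquad
    \phi(x_j) =
    \begin{cases}
        (x_j, [1]_I) & \text{if } j \leq i,\\
        (1, [x_j]_I) & \text{if } j > i.
    \end{cases}
\]
Each image satisfies $\rho_u(x) = x$ (trivially for $u = [1]_I$, and because $\rho_{[x_j]_I}(1) = 1$ for $j > i$), so $\phi$ lands in $I \sym_\rho (X/I)$. Counting pairs $(x,u) \in I \times (X/I)$ with $\rho_u(x) = x$ gives $(i+1) + (n - i - 1) = n$, which equals $|X|$, so $\phi$ is a bijection onto $I \sym_\rho (X/I)$. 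The homomorphism property $\phi(x_j \cdot x_k) = \phi(x_j) \cdot \phi(x_k)$ is then checked by a four-way case split on whether $j, k \leq i$ or $j, k > i$; in each case the semidirect-product formula collapses due to the particular choice of $\rho$.

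For part (2), the key observation is that the projection $\pi \colon I \sym_\rho Y \to Y$, $(x, u) \mapsto u$, is a surjective Hilbert-algebra morphism, and a direct check shows $\ker \pi = \{(x, 1) : x \in I\} = I \times \{1\}$. Hence $I \times \{1\}$ is an ideal of $I \sym_\rho Y$ of size $|I|$, and the induced quotient is $(I \sym_\rho Y)/(I \times \{1\}) \cong Y$. Transferring along the hypothesized isomorphism $X \cong I \sym_\rho Y$, the image of $I \times \{1\}$ is an ideal of $X$ of size $|I|$. Since $X \cong \LH_n$ has a unique ideal of each size, this image must coincide with the given $I$, whence $Y \cong (I \sym_\rho Y)/(I \times \{1\}) \cong X/I$.

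The main obstacle is locating the correct $\rho$ in part (1). The cardinality constraint $|I \sym_\rho (X/I)| = |X| = n$ is tight, which forces $\rho_{[x_j]_I}$ (for $j > i$) to fix exactly the element $1 \in I$ and no other; combined with the idempotency $\rho_u^2 = \rho_u$ and the endomorphism property, this essentially dictates the constant-to-$1$ choice. Once $\rho$ is correctly identified, the verification of the operation axioms and of the four-case homomorphism check for $\phi$ is routine but requires careful bookkeeping.
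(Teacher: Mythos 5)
Your proposal is correct. Part (1) is essentially the paper's own argument: the same constant-to-$1$ operation $\rho : X/I \to \End(I)$ (note the corollary's phrase ``$I$ operates on $X/I$'' is to be read, as you and the paper's proof both do, with $X/I$ acting on $I$, consistent with the notation $I \sym_\rho (X/I)$), the same identification $I \sym_\rho (X/I) = (\{1\}\times X/I)\cup(I\times\{1\})$, and the same size count; the only difference is that the paper closes by observing this set is a linear Hilbert algebra of size $n$ and invoking \cref{Hilbert_linear}, whereas you verify an explicit bijection $\phi$ by cases --- either route is fine, and the classification shortcut would spare you the four-way check. Part (2) is where you genuinely diverge: the paper determines $|Y|$ by counting prime ideals, via $|\Spec(I\sym_\rho Y)| = |\Spec(I)|+|\Spec(Y)|$ from \cref{Thm: ideals of symmetric semidirect product} and \cref{Thm: prime ideals of semidirect product of KL-algebras} together with $|\Spec(\LH_m)| = m-1$ from \cref{cor: prime ideals of linear algebras}, and then argues $Y$ embeds in $X$ so it is linear of that size. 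You instead use the projection $\pi(x,u)=u$, whose kernel $I\times\{1\}$ is an ideal with quotient $Y$, transfer it along the isomorphism, and conclude from the fact that $\LH_n$ has exactly one ideal of each cardinality. Your argument is more elementary (it needs only the first isomorphism theorem for L-algebras and \cref{Thm: linearL_ideals}, not the spectrum machinery of Section 3) and it identifies not just $|Y|$ but the actual ideal and quotient, which arguably gives a cleaner proof; the paper's spectrum count, by contrast, reuses the semidirect-product spectral decomposition it has just developed. As a side remark, your cardinality bookkeeping ($X/I\cong\LH_{n-i}$ when $I=\uset x_i$) is the correct one; the paper's proof writes $\LH_{n-i+1}$, which is off by one but does not affect the argument.
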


\begin{proof}
Let $I$ be a proper ideal of $X$.  
By \cref{Thm: linearL_ideals}, we have 
\[
    I = \uset x_i := \{\, x_j \mid j \leq i \,\},
\]
for some $i \in \{0, \dots, n-1\}$.

By \cref{Hilbert_linear} and \cref{Thm: linearL_ideals}, it follows that $X/I \cong \LH_{n-i+1}$.  
Define $\rho : X/I \to \End(I)$ by 
\[
    \rho_{[u]_I}(x) = 1, \quad \text{for all } x \in I, \, [u]_I \in X/I.
\]
Then 
\[
    I \sym_{\rho} (X/I) = (\{1\} \times X/I) \cup (I \times \{1\})
\]
is a linear Hilbert algebra of size $n$.  
By \cref{Hilbert_linear}, we obtain the isomorphism 
\[
    X \cong I \sym_{\rho} (X/I).
\]

Conversely, by \cref{Thm: ideals of symmetric semidirect product} and \cref{Thm: prime ideals of semidirect product of KL-algebras}, we have 
\[
    |\Spec(I \sym_\rho Y)| = |\Spec(I \rtimes_\rho Y)| = |\Spec(I)| + |\Spec(Y)|.
\]
By \cref{cor: prime ideals of linear algebras} and \cref{Hilbert_linear}, we know that 
\[
    |\LH_n| = |\Spec(\LH_n)| + 1.
\]
Hence, $|Y| = n - i + 1$.  
Since $Y$ is isomorphic to a Hilbert subalgebra of $X$, it follows that 
\[
    Y \cong \LH_{n - i + 1} \cong X/I.\qedhere
\]
\end{proof}

We now focus on Hilbert algebras that arise as extensions, via symmetric semidirect products, of the simple Hilbert algebra $\mathbf{A}_2=\{1>0\}$.

\begin{pro}\label{number of ideals of symmetric semidirect product of Hilbert algebras}
    Let $X$ be a Hilbert algebra and $\mathbf{A}_2$ be the simple Hilbert algebra such that $\mathbf{A}_2$ acts on $X$ via $\rho$ as Hilbert algebras. Let $I_0=\ker \rho_0$. Then   \[|\mathscr{I}(X\sym_\rho \mathbf{A}_2)|=|\mathscr{I}(X)|+|\mathscr{I}(X/I_0)|.\]
\end{pro}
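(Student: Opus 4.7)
The plan is to reduce to a counting problem on the ordinary semidirect product $X \rtimes_\rho \mathbf{A}_2$ and then collapse the general counting formula for ideals of semidirect products. First, by \cref{Thm: ideals of symmetric semidirect product}, the ideals of $X \sym_\rho \mathbf{A}_2$ are in natural bijection with those of $X \rtimes_\rho \mathbf{A}_2$, so it suffices to compute $|\mathscr{I}(X \rtimes_\rho \mathbf{A}_2)|$. Since both $X$ and $\mathbf{A}_2$ are Hilbert, hence KL, \cref{Thm: prime ideals of semidirect product of KL-algebras} gives $\rho\mathscr{I}(X) = \mathscr{I}(X)$, and the corollary preceding \cref{rhoI is a sublattice of I} then yields
\[
|\mathscr{I}(X \rtimes_\rho \mathbf{A}_2)| \;=\; \sum_{I \in \mathscr{I}(X)} |\{U \in \mathscr{I}(\mathbf{A}_2) \mid U \subseteq \ker(\rho^I)\}|.
\]

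Now $\mathbf{A}_2 = \{1 > 0\}$ has exactly two ideals, $\{1\}$ and $\mathbf{A}_2$, so each summand equals $1$ or $2$ according to whether $\ker(\rho^I) = \{1\}$ or $\ker(\rho^I) = \mathbf{A}_2$. Setting
\[
\mathcal{J} = \{I \in \mathscr{I}(X) \mid \ker(\rho^I) = \mathbf{A}_2\},
\]
the sum collapses to $|\mathscr{I}(X)| + |\mathcal{J}|$, and the proof will be complete once I identify $\mathcal{J}$ with the set of ideals of $X$ containing $I_0$, which is in bijection with $\mathscr{I}(X/I_0)$ via the standard correspondence between ideals of a quotient and ideals of $X$ above the kernel.

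The remaining point is thus the characterization of $\mathcal{J}$. By definition, $I \in \mathcal{J}$ if and only if $\rho^I_0 = \id_{X/I}$. Since the action is as KL-algebras we have $x \cdot \rho_0(x) = 1$ for every $x$, so this is equivalent to requiring that $\rho_0(x) \cdot x \in I$ for every $x \in X$. The main obstacle is to see that this last condition is equivalent to $I_0 \subseteq I$. One direction is immediate: if $y \in I_0$, then $\rho_0(y) = 1$ and hence $y = \rho_0(y) \cdot y$, so the condition on $I$ forces $y \in I$. For the other direction I will combine the Hilbert-action identity $\rho_0^2 = \rho_0$ with the CKL-action property $\rho_0(a \cdot b) = a \cdot \rho_0(b)$ from \cref{Def: operate on CKL-alg} to compute
\[
\rho_0\bigl(\rho_0(x) \cdot x\bigr) \;=\; \rho_0(x) \cdot \rho_0(x) \;=\; 1,
\]
which shows $\rho_0(x) \cdot x \in I_0$ for every $x \in X$; hence $I_0 \subseteq I$ implies the condition. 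Assembling these pieces yields $|\mathcal{J}| = |\mathscr{I}(X/I_0)|$ and the claimed cardinality identity.
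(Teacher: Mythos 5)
Your proposal is correct and follows essentially the same route as the paper: both reduce to the ordinary semidirect product via \cref{Thm: ideals of symmetric semidirect product}, apply the counting formula $|\mathscr{I}(X\rtimes_\rho \mathbf{A}_2)|=\sum_{I\in\rho\mathscr{I}(X)}|\{U\in\mathscr{I}(\mathbf{A}_2)\mid U\subseteq\ker(\rho^I)\}|$ with $\rho\mathscr{I}(X)=\mathscr{I}(X)$, and identify the extra contributions as coming from the ideals $I$ with $\rho_0^I=\id_{X/I}$, which are exactly those containing $I_0$. Your explicit verification that $\rho_0(x)\cdot x\in I_0$ for all $x$ (so that $\ker(\rho^I)=\mathbf{A}_2$ iff $I_0\subseteq I$) spells out a step the paper's own proof leaves largely implicit, but the underlying argument is the same.
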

\begin{proof}
Let $I_0=\ker \rho_0$. 
By \cref{Thm: prime ideals of semidirect product of KL-algebras}, $I_0$ is an $\rho$-ideal of $X$. Thus, we can induce $X/I_0$ to operate on $\mathbf{A}_2$ via $\rho^{I_0}$. Let $\bar{I}$ be an ideal of $X/I_0$.    For all $y\in X/I_0$, then \[\rho_0^{I_0}( \rho_0^{I_0}(y)\cdot y)= \rho_0^{I_0}(y)\cdot\rho_0^{I_0}(y)=1\]
    and 
    \[\rho_0^{I_0}( y\cdot\rho_0^{I_0}(y))= \rho_0^{I_0}(y)\cdot\rho_0^{I_0}(y)=1\]
    Since $\ker\rho_0^{I_0}=1$, then $\rho_0^{I_0}(y)\cdot y=y\cdot\rho_0^{I_0}(y)=1$, which means $\rho_0^{I_0}=\id_{X/I_0}.$
    Then, we have 
    \[(X/I_0)\rtimes_{\rho^{I_0}} \mathbf{A}_2=(X/I_0)\times \mathbf{A}_2.\]
    By \cref{Thm: ideals of symmetric semidirect product},
    \[
    |\mathscr{I}(X\sym_\rho \mathbf{A}_2)|=|\mathscr{I}(X)|+|\mathscr{I}(X/I_0)|.\qedhere
    \]
\end{proof}
\begin{cor}
     Let $X$ be a Hilbert algebra such that $\mathbf{A}_2$ operates on $X$ via $\rho$ as Hilbert algebras. Then 
     \[|\mathscr{I}(X\sym_\rho \mathbf{A}_2)|=|\mathscr{I}(X)|+1\] if and only if $X\sym_\rho \mathbf{A}_2=X\sqcup\{(1,0)\}$ is bounded with smallest element $(1,0)$.
\end{cor}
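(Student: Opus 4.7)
The plan is to reduce the statement to \cref{number of ideals of symmetric semidirect product of Hilbert algebras}, which gives
\[
|\mathscr{I}(X\sym_\rho \mathbf{A}_2)|=|\mathscr{I}(X)|+|\mathscr{I}(X/I_0)|
\]
with $I_0=\ker\rho_0$. The cardinality identity $|\mathscr{I}(X\sym_\rho \mathbf{A}_2)|=|\mathscr{I}(X)|+1$ is therefore equivalent to $|\mathscr{I}(X/I_0)|=1$, i.e.\ to the quotient $X/I_0$ being trivial, equivalently to $I_0=X$, or yet again to $\rho_0$ being the constant map at $1$.

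It remains to translate the condition $\rho_0\equiv 1$ into the structural description on the right-hand side. Assuming $\rho_0\equiv 1$, the defining equation $\rho_0(x)=x$ of the symmetric semidirect product forces $x=1$ in the $u=0$ slice, while $\rho_1=\id$ keeps every $(x,1)$ inside $X\sym_\rho \mathbf{A}_2$; hence $X\sym_\rho \mathbf{A}_2=X\sqcup\{(1,0)\}$. A direct computation with the semidirect product formula gives
\[
(1,0)\cdot (x,1)=\bigl(\rho_{0\cdot 1}(1)\cdot \rho_{1\cdot 0}(x),\, 0\cdot 1\bigr)=(\rho_0(x),1),
\]
which equals $(1,1)$ for every $x\in X$ precisely when $\rho_0\equiv 1$. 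Thus $\rho_0\equiv 1$ is equivalent to $(1,0)\le(x,1)$ for every $x\in X$, i.e.\ to $(1,0)$ being the smallest element of $X\sym_\rho \mathbf{A}_2$.

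Combining the two equivalences produces the desired biconditional. I do not anticipate any genuine obstacle: the argument is a two-step unwinding of definitions. The only care needed is in the converse direction, where $\rho_0\equiv 1$ must be extracted from the minimality of $(1,0)$ rather than merely from the set-theoretic identity $X\sym_\rho \mathbf{A}_2=X\sqcup\{(1,0)\}$, since the latter by itself only yields $\rho_0(x)\neq x$ for $x\neq 1$, whereas the minimality hypothesis upgrades this to $\rho_0(x)=1$ via the displayed computation above.
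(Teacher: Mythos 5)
Your proposal is correct and follows the same route as the paper: reduce via \cref{number of ideals of symmetric semidirect product of Hilbert algebras} to $|\mathscr{I}(X/I_0)|=1$, i.e.\ $\rho_0\equiv 1$, and then identify this with the structural condition on $X\sym_\rho \mathbf{A}_2$. In fact the paper's own proof is just the one-line observation that $|\mathscr{I}(X/I_0)|=1$ iff $\rho_0(x)=1$ for all $x$, so your unwinding of the second equivalence (including the correct remark that minimality of $(1,0)$, not merely the set-theoretic identity, is what forces $\rho_0\equiv 1$ in the converse) supplies detail the paper leaves implicit.
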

\begin{proof}
     $|\mathscr{I}(X/I_0)|=1$ if and only $\rho_0(x)$ for all $x\in X$. 
\end{proof}

\begin{exa}
Let $X=\{x_0 > x_1 > \cdots > x_{n-1}\}$ be a linear Hilbert algebra.  
For each $0 \le k < n$, define a map
\[
\rho^{(k)} : \mathbf{A}_2 \longrightarrow \End(X)
\]
by setting $\rho^{(k)}_1 = \id_X$ and defining $\rho^{(k)}_0 : X \to X$ as
\[
\rho^{(k)}_0(x_i) =
\begin{cases}
x_i, & \text{if } i > k,\\[3pt]
1,   & \text{if } i \le k .
\end{cases}
\]

It is straightforward to verify that for each $1 \le k < n$,  
$\mathbf{A}_2$ acts on $X$ via $\rho^{(k)}$ as Hilbert algebras.  
By \cref{Hilbert_linear}, the quotient $X / \ker \rho^{(k)}_0$ is isomorphic to the Hilbert chain $\LH_{n-k}$.  
Therefore, by \cref{number of ideals of symmetric semidirect product of Hilbert algebras}, we obtain
\[
    \lvert \mathscr{I}( X \sym_{\rho^{(k)}} \mathbf{A}_2 ) \rvert = 2n - k .
\]
\end{exa}

\section*{Credit authorship contribution statement}
All authors contributed equally to this work.

\section*{Acknowledgements}

The second author is supported by the National Key R$\&$D Program of China (No. 2024YFA1013803), by Shanghai Key Laboratory of PMMP (No. 22DZ2229014), and by China Scholarship Council ( No. 202406140151).

The first author is supported by Fonds Wetenschappelijk Onderzoek - Vlaanderen, via a PhD Fellowship for fundamental research, grant 11PIO24N. 

This work is partially supported by the project OZR3762 of Vrije Universiteit Brussel, 
and by Fonds Wetenschappelijk Onderzoek - Vlaanderen, via the Senior Research Project G004124N.

The authors thank Leandro Vendramin for insightful discussions (also about the title).
\bibliography{references}
\bibliographystyle{abbrv}

\end{document}